\documentclass[a4paper,10pt]{article}

\usepackage{verbatim}
\usepackage{charter}
\usepackage[margin=3cm]{geometry}
\usepackage[algo2e,ruled,vlined]{algorithm2e}
\usepackage{algorithm}
\usepackage{enumerate}
\usepackage{booktabs}

\usepackage{here}
\usepackage{eurosym}
\usepackage{setspace}
\usepackage{ulem}
\usepackage{algpseudocode}
\usepackage[T1]{fontenc}
\usepackage{dsfont}
\usepackage{url}
\usepackage[textwidth=2.5cm]{todonotes}
\usepackage{microtype}
\usepackage[affil-sl]{authblk}
\usepackage{tikz}
\usetikzlibrary{shapes}

\usepackage{pgfplots}
\pgfplotsset{compat=1.11}
\usepackage{adjustbox}
\usepackage{subcaption}

\usepackage[breaklinks,colorlinks,citecolor=blue,linkcolor=blue,urlcolor=blue,bookmarks=false]{hyperref}
\usepackage{amsmath}
\usepackage{amssymb}
\usepackage{amsthm}
\usepackage{mathtools}
\usepackage{nicefrac}

\usepackage{amsfonts,latexsym,graphics}
\usepackage{hyperref}
\usepackage{color}
\usepackage{multirow}

\usepackage[english]{babel}
\usepackage{epsfig}
\usepackage{graphicx}
\usepackage{bm}

\usepackage[capitalise,noabbrev]{cleveref}

\usepackage{xspace}         

\theoremstyle{definition}
\newtheorem{definition}{Definition}
\newtheorem{example}[definition]{Example}

\newtheorem{remark}[definition]{Remark}

\theoremstyle{plain}
\newtheorem{proposition}[definition]{Proposition}
\newtheorem{lemma}[definition]{Lemma}
\newtheorem{theorem}[definition]{Theorem}

\newtheorem{corollary}[definition]{Corollary}
\newcommand*{\claimproofname}{Proof}

\crefname{Claim}{claim}{claims}


\newcommand{\card}[1]{|#1|}
\newcommand{\norm}[2]{\| #1 \|_{#2}}
\newcommand{\define}{\coloneqq}

\newcommand{\R}{\mathds{R}}



\def\J{{\mbox {\boldmath $J$}}}

\def\T{\mbox{\boldmath $T$}}

\def\matrix0{{\mbox {\boldmath $O$}}}


\def\j{{\mbox{\boldmath $1$}}}

\def\x{{\mbox{\boldmath $x$}}}
\def\y{{\mbox{\boldmath $y$}}}
\def\z{{\mbox{\boldmath $z$}}}

\def\vec0{\mbox{\bf 0}}

\newcommand\tran{\mkern-2mu\raise1.25ex\hbox{$\scriptscriptstyle\top$}\mkern-3.5mu}


\def\sp{\mathop{\rm sp }\nolimits}




\def\Pa{{\mathcal{P}}}
\def\Qa{{\mathcal{Q}}}

\def\diag{\mathop{\rm diag }\nolimits}

\def\1{{\bf 1}}
\def\J{{\bf J}}



%
	{\end{list}}

\SetKwInOut{Input}{Input}
\SetKwInOut{Output}{Output}



\def\figa{1.0/1.0/0.0,2.0/1.0/0.5111111111111111,3.0/1.0/0.4083333333333333,4.0/1.0/0.23333333333333334,5.0/1.0/0.10714285714285714,6.0/1.0/0.0380952380952381,7.0/1.0/0.008333333333333333,8.0/1.0/0.0,9.0/1.0/0.0,10.0/1.0/0.0,1.0/2.0/1.7,2.0/2.0/0.4222222222222222,3.0/2.0/0.08333333333333333,4.0/2.0/0.009523809523809525,5.0/2.0/0.0,6.0/2.0/0.0,7.0/2.0/0.0,8.0/2.0/0.0,9.0/2.0/0.0,10.0/2.0/0.0}
\def\figb{1.0/1.0/0.0,2.0/1.0/1.0444444444444445,3.0/1.0/1.625,4.0/1.0/1.6761904761904762,5.0/1.0/1.4801587301587302,6.0/1.0/1.2,7.0/1.0/0.9,8.0/1.0/0.6,9.0/1.0/0.3,10.0/1.0/0.0,1.0/2.0/0.0,2.0/2.0/3.066666666666667,3.0/2.0/2.475,4.0/2.0/1.680952380952381,5.0/2.0/1.1071428571428572,6.0/2.0/0.7142857142857143,7.0/2.0/0.44166666666666665,8.0/2.0/0.24444444444444444,9.0/2.0/0.1,10.0/2.0/0.0,1.0/3.0/9.0,2.0/3.0/2.6,3.0/3.0/0.8833333333333333,4.0/3.0/0.32857142857142857,5.0/3.0/0.11904761904761904,6.0/3.0/0.0380952380952381,7.0/3.0/0.008333333333333333,8.0/3.0/0.0,9.0/3.0/0.0,10.0/3.0/0.0}
\def\figc{1.0/1.0/0.0,2.0/1.0/1.0222222222222221,3.0/1.0/2.441666666666667,4.0/1.0/2.9,5.0/1.0/2.9285714285714284,6.0/1.0/2.7904761904761903,7.0/1.0/2.6,8.0/1.0/2.4,9.0/1.0/2.2,10.0/1.0/2.0,1.0/2.0/0.0,2.0/2.0/7.022222222222222,3.0/2.0/10.533333333333333,4.0/2.0/11.3,5.0/2.0/10.912698412698413,6.0/2.0/10.142857142857142,7.0/2.0/9.291666666666666,8.0/2.0/8.466666666666667,9.0/2.0/7.7,10.0/2.0/7.0,1.0/3.0/0.0,2.0/3.0/17.355555555555554,3.0/3.0/15.308333333333334,4.0/3.0/12.585714285714285,5.0/3.0/10.515873015873016,6.0/3.0/8.952380952380953,7.0/3.0/7.716666666666667,8.0/3.0/6.688888888888889,9.0/3.0/5.8,10.0/3.0/5.0,1.0/4.0/42.9,2.0/4.0/11.666666666666666,3.0/4.0/3.975,4.0/4.0/1.4857142857142858,5.0/4.0/0.5793650793650794,6.0/4.0/0.22857142857142856,7.0/4.0/0.08333333333333333,8.0/4.0/0.022222222222222223,9.0/4.0/0.0,10.0/4.0/0.0}
\def\figd{1.0/1.0/0.0,2.0/1.0/0.7333333333333333,3.0/1.0/1.8333333333333333,4.0/1.0/2.5476190476190474,5.0/1.0/2.857142857142857,6.0/1.0/2.961904761904762,7.0/1.0/2.9916666666666667,8.0/1.0/3.0,9.0/1.0/3.0,10.0/1.0/3.0,1.0/2.0/0.0,2.0/2.0/10.466666666666667,3.0/2.0/26.075,4.0/2.0/33.60476190476191,5.0/2.0/36.45634920634921,6.0/2.0/37.30952380952381,7.0/2.0/37.325,8.0/2.0/36.977777777777774,9.0/2.0/36.5,10.0/2.0/36.0,1.0/3.0/0.0,2.0/3.0/45.91111111111111,3.0/3.0/64.71666666666667,4.0/3.0/69.1952380952381,5.0/3.0/69.19444444444444,6.0/3.0/67.38095238095238,7.0/3.0/64.80833333333334,8.0/3.0/61.955555555555556,9.0/3.0/59.0,10.0/3.0/56.0,1.0/4.0/0.0,2.0/4.0/87.73333333333333,3.0/4.0/74.475,4.0/4.0/61.42857142857143,5.0/4.0/52.31349206349206,6.0/4.0/45.74285714285714,7.0/4.0/40.61666666666667,8.0/4.0/36.31111111111111,9.0/4.0/32.5,10.0/4.0/29.0,1.0/5.0/215.5,2.0/5.0/57.46666666666667,3.0/5.0/23.141666666666666,4.0/5.0/12.352380952380953,5.0/5.0/8.011904761904763,6.0/5.0/5.8428571428571425,7.0/5.0/4.5,8.0/5.0/3.511111111111111,9.0/5.0/2.7,10.0/5.0/2.0}
\def\fige{1.0/1.0/0.0,2.0/1.0/1.6888888888888889,3.0/1.0/5.383333333333334,4.0/1.0/6.980952380952381,5.0/1.0/7.626984126984127,6.0/1.0/7.880952380952381,7.0/1.0/7.975,8.0/1.0/8.0,9.0/1.0/8.0,10.0/1.0/8.0,1.0/2.0/0.0,2.0/2.0/13.088888888888889,3.0/2.0/36.55833333333333,4.0/2.0/50.285714285714285,5.0/2.0/56.257936507936506,6.0/2.0/58.49047619047619,7.0/2.0/59.09166666666667,8.0/2.0/59.0,9.0/2.0/58.6,10.0/2.0/58.0,1.0/3.0/0.0,2.0/3.0/82.22222222222223,3.0/3.0/174.025,4.0/3.0/218.71904761904761,5.0/3.0/237.98015873015873,6.0/3.0/245.0142857142857,7.0/3.0/246.00833333333333,8.0/3.0/244.0222222222222,9.0/3.0/240.7,10.0/3.0/237.0,1.0/4.0/0.0,2.0/4.0/271.9555555555556,3.0/4.0/364.275,4.0/4.0/379.37619047619046,5.0/4.0/375.19444444444446,6.0/4.0/365.8333333333333,7.0/4.0/355.275,8.0/4.0/344.68888888888887,9.0/4.0/334.3,10.0/4.0/324.0,1.0/5.0/0.0,2.0/5.0/424.2,3.0/5.0/329.21666666666664,4.0/5.0/265.8333333333333,5.0/5.0/231.43253968253967,6.0/5.0/211.21428571428572,7.0/5.0/197.58333333333334,8.0/5.0/187.06666666666666,9.0/5.0/178.1,10.0/5.0/170.0,1.0/6.0/1076.4,2.0/6.0/249.2888888888889,3.0/6.0/100.54166666666667,4.0/6.0/57.74761904761905,5.0/6.0/40.666666666666664,6.0/6.0/32.10952380952381,7.0/6.0/27.066666666666666,8.0/6.0/23.666666666666668,9.0/6.0/21.1,10.0/6.0/19.0}
\def\figf{1.0/1.0/0.0,2.0/1.0/0.15555555555555556,3.0/1.0/1.325,4.0/1.0/2.242857142857143,5.0/1.0/2.6984126984126986,6.0/1.0/2.895238095238095,7.0/1.0/2.975,8.0/1.0/3.0,9.0/1.0/3.0,10.0/1.0/3.0,1.0/2.0/0.0,2.0/2.0/15.577777777777778,3.0/2.0/61.0,4.0/2.0/88.14285714285714,5.0/2.0/100.5515873015873,6.0/2.0/105.97619047619048,7.0/2.0/108.45833333333333,8.0/2.0/109.73333333333333,9.0/2.0/110.5,10.0/2.0/111.0,1.0/3.0/0.0,2.0/3.0/128.66666666666666,3.0/3.0/364.19166666666666,4.0/3.0/505.8904761904762,5.0/3.0/576.1349206349206,6.0/3.0/608.9857142857143,7.0/3.0/623.35,8.0/3.0/628.5777777777778,9.0/3.0/629.3,10.0/3.0/628.0,1.0/4.0/0.0,2.0/4.0/607.9555555555555,3.0/4.0/1184.6333333333334,4.0/4.0/1444.3809523809523,5.0/4.0/1553.25,6.0/4.0/1595.695238095238,7.0/4.0/1608.4083333333333,8.0/4.0/1607.7555555555555,9.0/4.0/1601.2,10.0/4.0/1592.0,1.0/5.0/0.0,2.0/5.0/1551.2888888888888,3.0/5.0/1886.55,4.0/5.0/1891.9809523809524,5.0/5.0/1858.313492063492,6.0/5.0/1824.752380952381,7.0/5.0/1794.7,8.0/5.0/1767.2,9.0/5.0/1741.2,10.0/5.0/1716.0,1.0/6.0/0.0,2.0/6.0/2068.8,3.0/6.0/1487.1666666666667,4.0/6.0/1164.0809523809523,5.0/6.0/1003.1666666666666,6.0/6.0/917.5904761904762,7.0/6.0/866.7333333333333,8.0/6.0/832.0666666666667,9.0/6.0/805.1,10.0/6.0/782.0,1.0/7.0/5539.5,2.0/7.0/1092.7777777777778,3.0/7.0/408.10833333333335,4.0/7.0/225.86190476190475,5.0/7.0/159.76190476190476,6.0/7.0/130.8190476190476,7.0/7.0/116.33333333333333,8.0/7.0/108.15555555555555,9.0/7.0/102.9,10.0/7.0/99.0}
\def\largefiga{1.0/1.0/0.0,2.0/1.0/0.06666666666666667,3.0/1.0/0.525,4.0/1.0/0.8761904761904762,5.0/1.0/0.9880952380952381,6.0/1.0/1.0,7.0/1.0/1.0,8.0/1.0/1.0,9.0/1.0/1.0,10.0/1.0/1.0,1.0/2.0/0.0,2.0/2.0/0.7777777777777778,3.0/2.0/7.233333333333333,4.0/2.0/14.076190476190476,5.0/2.0/18.253968253968253,6.0/2.0/20.419047619047618,7.0/2.0/21.441666666666666,8.0/2.0/21.866666666666667,9.0/2.0/22.0,10.0/2.0/22.0,1.0/3.0/0.0,2.0/3.0/7.177777777777778,3.0/3.0/43.391666666666666,4.0/3.0/75.15238095238095,5.0/3.0/92.85317460317461,6.0/3.0/101.47619047619048,7.0/3.0/105.25,8.0/3.0/106.55555555555556,9.0/3.0/106.6,10.0/3.0/106.0,1.0/4.0/0.0,2.0/4.0/32.4,3.0/4.0/114.26666666666667,4.0/4.0/157.15238095238095,5.0/4.0/173.2420634920635,6.0/4.0/177.7,7.0/4.0/177.79166666666666,8.0/4.0/176.42222222222222,9.0/4.0/174.7,10.0/4.0/173.0,1.0/5.0/0.0,2.0/5.0/84.5111111111111,3.0/5.0/151.89166666666668,4.0/5.0/143.14285714285714,5.0/5.0/125.88095238095238,6.0/5.0/113.6952380952381,7.0/5.0/105.39166666666667,8.0/5.0/99.11111111111111,9.0/5.0/93.8,10.0/5.0/89.0,1.0/6.0/0.0,2.0/6.0/137.51111111111112,3.0/6.0/105.85,4.0/6.0/56.680952380952384,5.0/6.0/32.80555555555556,6.0/6.0/21.547619047619047,7.0/6.0/15.858333333333333,8.0/6.0/12.666666666666666,9.0/6.0/10.6,10.0/6.0/9.0,1.0/7.0/0.0,2.0/7.0/125.5111111111111,3.0/7.0/37.15,4.0/7.0/8.780952380952382,5.0/7.0/2.0753968253968256,6.0/7.0/0.49523809523809526,7.0/7.0/0.11666666666666667,8.0/7.0/0.022222222222222223,9.0/7.0/0.0,10.0/7.0/0.0,1.0/8.0/0.0,2.0/8.0/68.33333333333333,3.0/8.0/6.191666666666666,4.0/8.0/0.3380952380952381,5.0/8.0/0.011904761904761904,6.0/8.0/0.0,7.0/8.0/0.0,8.0/8.0/0.0,9.0/8.0/0.0,10.0/8.0/0.0,1.0/9.0/0.0,2.0/9.0/21.08888888888889,3.0/9.0/0.11666666666666667,4.0/9.0/0.0,5.0/9.0/0.0,6.0/9.0/0.0,7.0/9.0/0.0,8.0/9.0/0.0,9.0/9.0/0.0,10.0/9.0/0.0,1.0/10.0/488.5,2.0/10.0/0.0,3.0/10.0/0.0,4.0/10.0/0.0,5.0/10.0/0.0,6.0/10.0/0.0,7.0/10.0/0.0,8.0/10.0/0.0,9.0/10.0/0.0,10.0/10.0/0.0}
\def\largefigb{1.0/1.0/0.0,2.0/1.0/0.0,3.0/1.0/0.325,4.0/1.0/0.7666666666666667,5.0/1.0/0.9404761904761905,6.0/1.0/0.9904761904761905,7.0/1.0/1.0,8.0/1.0/1.0,9.0/1.0/1.0,10.0/1.0/1.0,1.0/2.0/0.0,2.0/2.0/0.24444444444444444,3.0/2.0/8.05,4.0/2.0/16.466666666666665,5.0/2.0/19.841269841269842,6.0/2.0/21.033333333333335,7.0/2.0/21.516666666666666,8.0/2.0/21.755555555555556,9.0/2.0/21.9,10.0/2.0/22.0,1.0/3.0/0.0,2.0/3.0/2.933333333333333,3.0/3.0/43.40833333333333,4.0/3.0/83.35714285714286,5.0/3.0/102.03174603174604,6.0/3.0/109.4952380952381,7.0/3.0/112.59166666666667,8.0/3.0/114.02222222222223,9.0/3.0/114.7,10.0/3.0/115.0,1.0/4.0/0.0,2.0/4.0/10.955555555555556,3.0/4.0/110.36666666666666,4.0/4.0/169.6809523809524,5.0/4.0/189.73015873015873,6.0/4.0/196.57142857142858,7.0/4.0/199.025,8.0/4.0/199.75555555555556,9.0/4.0/199.9,10.0/4.0/200.0,1.0/5.0/0.0,2.0/5.0/37.31111111111111,3.0/5.0/153.725,4.0/5.0/163.33333333333334,5.0/5.0/152.29761904761904,6.0/5.0/145.46666666666667,7.0/5.0/142.275,8.0/5.0/140.95555555555555,9.0/5.0/140.4,10.0/5.0/140.0,1.0/6.0/0.0,2.0/6.0/70.84444444444445,3.0/6.0/113.86666666666666,4.0/6.0/56.93809523809524,5.0/6.0/33.567460317460316,6.0/6.0/26.123809523809523,7.0/6.0/23.541666666666668,8.0/6.0/22.511111111111113,9.0/6.0/22.1,10.0/6.0/22.0,1.0/7.0/0.0,2.0/7.0/107.57777777777778,3.0/7.0/49.9,4.0/7.0/8.32857142857143,5.0/7.0/1.5357142857142858,6.0/7.0/0.319047619047619,7.0/7.0/0.05,8.0/7.0/0.0,9.0/7.0/0.0,10.0/7.0/0.0,1.0/8.0/0.0,2.0/8.0/111.22222222222223,3.0/8.0/15.741666666666667,4.0/8.0/1.0238095238095237,5.0/8.0/0.05555555555555555,6.0/8.0/0.0,7.0/8.0/0.0,8.0/8.0/0.0,9.0/8.0/0.0,10.0/8.0/0.0,1.0/9.0/0.0,2.0/9.0/85.08888888888889,3.0/9.0/3.7083333333333335,4.0/9.0/0.1,5.0/9.0/0.0,6.0/9.0/0.0,7.0/9.0/0.0,8.0/9.0/0.0,9.0/9.0/0.0,10.0/9.0/0.0,1.0/10.0/0.0,2.0/10.0/47.28888888888889,3.0/10.0/0.8083333333333333,4.0/10.0/0.004761904761904762,5.0/10.0/0.0,6.0/10.0/0.0,7.0/10.0/0.0,8.0/10.0/0.0,9.0/10.0/0.0,10.0/10.0/0.0,1.0/11.0/0.0,2.0/11.0/19.755555555555556,3.0/11.0/0.1,4.0/11.0/0.0,5.0/11.0/0.0,6.0/11.0/0.0,7.0/11.0/0.0,8.0/11.0/0.0,9.0/11.0/0.0,10.0/11.0/0.0,1.0/12.0/0.0,2.0/12.0/5.7555555555555555,3.0/12.0/0.0,4.0/12.0/0.0,5.0/12.0/0.0,6.0/12.0/0.0,7.0/12.0/0.0,8.0/12.0/0.0,9.0/12.0/0.0,10.0/12.0/0.0,1.0/13.0/0.0,2.0/13.0/0.8444444444444444,3.0/13.0/0.0,4.0/13.0/0.0,5.0/13.0/0.0,6.0/13.0/0.0,7.0/13.0/0.0,8.0/13.0/0.0,9.0/13.0/0.0,10.0/13.0/0.0,1.0/14.0/0.0,2.0/14.0/0.17777777777777778,3.0/14.0/0.0,4.0/14.0/0.0,5.0/14.0/0.0,6.0/14.0/0.0,7.0/14.0/0.0,8.0/14.0/0.0,9.0/14.0/0.0,10.0/14.0/0.0,1.0/15.0/500.0,2.0/15.0/0.0,3.0/15.0/0.0,4.0/15.0/0.0,5.0/15.0/0.0,6.0/15.0/0.0,7.0/15.0/0.0,8.0/15.0/0.0,9.0/15.0/0.0,10.0/15.0/0.0}
\def\largefigc{1.0/1.0/0.0,2.0/1.0/0.0,3.0/1.0/0.14166666666666666,4.0/1.0/0.3904761904761905,5.0/1.0/0.626984126984127,6.0/1.0/0.8047619047619048,7.0/1.0/0.9166666666666666,8.0/1.0/0.9777777777777777,9.0/1.0/1.0,10.0/1.0/1.0,1.0/2.0/0.0,2.0/2.0/0.044444444444444446,3.0/2.0/5.525,4.0/2.0/15.123809523809523,5.0/2.0/19.80952380952381,6.0/2.0/21.457142857142856,7.0/2.0/21.916666666666668,8.0/2.0/22.0,9.0/2.0/22.0,10.0/2.0/22.0,1.0/3.0/0.0,2.0/3.0/0.4,3.0/3.0/34.30833333333333,4.0/3.0/78.60952380952381,5.0/3.0/99.14682539682539,6.0/3.0/107.4047619047619,7.0/3.0/110.88333333333334,8.0/3.0/112.5111111111111,9.0/3.0/113.4,10.0/3.0/114.0,1.0/4.0/0.0,2.0/4.0/2.6444444444444444,3.0/4.0/93.89166666666667,4.0/4.0/168.66190476190476,5.0/4.0/192.88095238095238,6.0/4.0/200.27619047619046,7.0/4.0/202.81666666666666,8.0/4.0/203.8,9.0/4.0/204.1,10.0/4.0/204.0,1.0/5.0/0.0,2.0/5.0/10.244444444444444,3.0/5.0/141.25833333333333,4.0/5.0/158.1904761904762,5.0/5.0/146.77777777777777,6.0/5.0/140.35238095238094,7.0/5.0/137.675,8.0/5.0/136.53333333333333,9.0/5.0/136.1,10.0/5.0/136.0,1.0/6.0/0.0,2.0/6.0/28.533333333333335,3.0/6.0/119.55,4.0/6.0/64.48571428571428,5.0/6.0/37.91269841269841,6.0/6.0/28.98095238095238,7.0/6.0/25.591666666666665,8.0/6.0/24.133333333333333,9.0/6.0/23.4,10.0/6.0/23.0,1.0/7.0/0.0,2.0/7.0/55.24444444444445,3.0/7.0/67.125,4.0/7.0/12.709523809523809,5.0/7.0/2.757936507936508,6.0/7.0/0.719047619047619,7.0/7.0/0.2,8.0/7.0/0.044444444444444446,9.0/7.0/0.0,10.0/7.0/0.0,1.0/8.0/0.0,2.0/8.0/81.46666666666667,3.0/8.0/26.683333333333334,4.0/8.0/1.6095238095238096,5.0/8.0/0.08333333333333333,6.0/8.0/0.004761904761904762,7.0/8.0/0.0,8.0/8.0/0.0,9.0/8.0/0.0,10.0/8.0/0.0,1.0/9.0/0.0,2.0/9.0/94.64444444444445,3.0/9.0/8.508333333333333,4.0/9.0/0.19523809523809524,5.0/9.0/0.003968253968253968,6.0/9.0/0.0,7.0/9.0/0.0,8.0/9.0/0.0,9.0/9.0/0.0,10.0/9.0/0.0,1.0/10.0/0.0,2.0/10.0/89.68888888888888,3.0/10.0/2.275,4.0/10.0/0.023809523809523808,5.0/10.0/0.0,6.0/10.0/0.0,7.0/10.0/0.0,8.0/10.0/0.0,9.0/10.0/0.0,10.0/10.0/0.0,1.0/11.0/0.0,2.0/11.0/67.73333333333333,3.0/11.0/0.55,4.0/11.0/0.0,5.0/11.0/0.0,6.0/11.0/0.0,7.0/11.0/0.0,8.0/11.0/0.0,9.0/11.0/0.0,10.0/11.0/0.0,1.0/12.0/0.0,2.0/12.0/39.48888888888889,3.0/12.0/0.15833333333333333,4.0/12.0/0.0,5.0/12.0/0.0,6.0/12.0/0.0,7.0/12.0/0.0,8.0/12.0/0.0,9.0/12.0/0.0,10.0/12.0/0.0,1.0/13.0/0.0,2.0/13.0/19.977777777777778,3.0/13.0/0.025,4.0/13.0/0.0,5.0/13.0/0.0,6.0/13.0/0.0,7.0/13.0/0.0,8.0/13.0/0.0,9.0/13.0/0.0,10.0/13.0/0.0,1.0/14.0/0.0,2.0/14.0/7.044444444444444,3.0/14.0/0.0,4.0/14.0/0.0,5.0/14.0/0.0,6.0/14.0/0.0,7.0/14.0/0.0,8.0/14.0/0.0,9.0/14.0/0.0,10.0/14.0/0.0,1.0/15.0/0.0,2.0/15.0/2.2,3.0/15.0/0.0,4.0/15.0/0.0,5.0/15.0/0.0,6.0/15.0/0.0,7.0/15.0/0.0,8.0/15.0/0.0,9.0/15.0/0.0,10.0/15.0/0.0,1.0/16.0/0.0,2.0/16.0/0.6,3.0/16.0/0.0,4.0/16.0/0.0,5.0/16.0/0.0,6.0/16.0/0.0,7.0/16.0/0.0,8.0/16.0/0.0,9.0/16.0/0.0,10.0/16.0/0.0,1.0/17.0/0.0,2.0/17.0/0.022222222222222223,3.0/17.0/0.0,4.0/17.0/0.0,5.0/17.0/0.0,6.0/17.0/0.0,7.0/17.0/0.0,8.0/17.0/0.0,9.0/17.0/0.0,10.0/17.0/0.0,1.0/18.0/0.0,2.0/18.0/0.022222222222222223,3.0/18.0/0.0,4.0/18.0/0.0,5.0/18.0/0.0,6.0/18.0/0.0,7.0/18.0/0.0,8.0/18.0/0.0,9.0/18.0/0.0,10.0/18.0/0.0,1.0/19.0/0.0,2.0/19.0/0.0,3.0/19.0/0.0,4.0/19.0/0.0,5.0/19.0/0.0,6.0/19.0/0.0,7.0/19.0/0.0,8.0/19.0/0.0,9.0/19.0/0.0,10.0/19.0/0.0,1.0/20.0/500.0,2.0/20.0/0.0,3.0/20.0/0.0,4.0/20.0/0.0,5.0/20.0/0.0,6.0/20.0/0.0,7.0/20.0/0.0,8.0/20.0/0.0,9.0/20.0/0.0,10.0/20.0/0.0}
\def\largefigd{1.0/1.0/0.0,2.0/1.0/0.0,3.0/1.0/0.5083333333333333,4.0/1.0/0.8761904761904762,5.0/1.0/0.9722222222222222,6.0/1.0/0.9952380952380953,7.0/1.0/1.0,8.0/1.0/1.0,9.0/1.0/1.0,10.0/1.0/1.0,1.0/2.0/0.0,2.0/2.0/0.06666666666666667,3.0/2.0/7.491666666666666,4.0/2.0/16.3,5.0/2.0/20.03174603174603,6.0/2.0/21.34285714285714,7.0/2.0/21.8,8.0/2.0/21.955555555555556,9.0/2.0/22.0,10.0/2.0/22.0,1.0/3.0/0.0,2.0/3.0/0.7333333333333333,3.0/3.0/42.041666666666664,4.0/3.0/84.15714285714286,5.0/3.0/102.42063492063492,6.0/3.0/109.73809523809524,7.0/3.0/112.825,8.0/3.0/114.2,9.0/3.0/114.8,10.0/3.0/115.0,1.0/4.0/0.0,2.0/4.0/3.422222222222222,3.0/4.0/109.35,4.0/4.0/172.03809523809525,5.0/4.0/190.37301587301587,6.0/4.0/196.76666666666668,7.0/4.0/199.375,8.0/4.0/200.57777777777778,9.0/4.0/201.3,10.0/4.0/202.0,1.0/5.0/0.0,2.0/5.0/12.177777777777777,3.0/5.0/153.76666666666668,4.0/5.0/160.94761904761904,5.0/5.0/150.5079365079365,6.0/5.0/144.40952380952382,7.0/5.0/141.25833333333333,8.0/5.0/139.62222222222223,9.0/5.0/138.7,10.0/5.0/138.0,1.0/6.0/0.0,2.0/6.0/30.2,3.0/6.0/115.725,4.0/6.0/57.576190476190476,5.0/6.0/34.48809523809524,6.0/6.0/26.580952380952382,7.0/6.0/23.733333333333334,8.0/6.0/22.644444444444446,9.0/6.0/22.2,10.0/6.0/22.0,1.0/7.0/0.0,2.0/7.0/53.577777777777776,3.0/7.0/51.28333333333333,4.0/7.0/7.5095238095238095,5.0/7.0/1.1984126984126984,6.0/7.0/0.16666666666666666,7.0/7.0/0.008333333333333333,8.0/7.0/0.0,9.0/7.0/0.0,10.0/7.0/0.0,1.0/8.0/0.0,2.0/8.0/80.46666666666667,3.0/8.0/15.3,4.0/8.0/0.5476190476190477,5.0/8.0/0.007936507936507936,6.0/8.0/0.0,7.0/8.0/0.0,8.0/8.0/0.0,9.0/8.0/0.0,10.0/8.0/0.0,1.0/9.0/0.0,2.0/9.0/92.17777777777778,3.0/9.0/3.7583333333333333,4.0/9.0/0.04285714285714286,5.0/9.0/0.0,6.0/9.0/0.0,7.0/9.0/0.0,8.0/9.0/0.0,9.0/9.0/0.0,10.0/9.0/0.0,1.0/10.0/0.0,2.0/10.0/83.84444444444445,3.0/10.0/0.6166666666666667,4.0/10.0/0.004761904761904762,5.0/10.0/0.0,6.0/10.0/0.0,7.0/10.0/0.0,8.0/10.0/0.0,9.0/10.0/0.0,10.0/10.0/0.0,1.0/11.0/0.0,2.0/11.0/64.5111111111111,3.0/11.0/0.125,4.0/11.0/0.0,5.0/11.0/0.0,6.0/11.0/0.0,7.0/11.0/0.0,8.0/11.0/0.0,9.0/11.0/0.0,10.0/11.0/0.0,1.0/12.0/0.0,2.0/12.0/41.111111111111114,3.0/12.0/0.03333333333333333,4.0/12.0/0.0,5.0/12.0/0.0,6.0/12.0/0.0,7.0/12.0/0.0,8.0/12.0/0.0,9.0/12.0/0.0,10.0/12.0/0.0,1.0/13.0/0.0,2.0/13.0/21.711111111111112,3.0/13.0/0.0,4.0/13.0/0.0,5.0/13.0/0.0,6.0/13.0/0.0,7.0/13.0/0.0,8.0/13.0/0.0,9.0/13.0/0.0,10.0/13.0/0.0,1.0/14.0/0.0,2.0/14.0/10.088888888888889,3.0/14.0/0.0,4.0/14.0/0.0,5.0/14.0/0.0,6.0/14.0/0.0,7.0/14.0/0.0,8.0/14.0/0.0,9.0/14.0/0.0,10.0/14.0/0.0,1.0/15.0/0.0,2.0/15.0/4.2,3.0/15.0/0.0,4.0/15.0/0.0,5.0/15.0/0.0,6.0/15.0/0.0,7.0/15.0/0.0,8.0/15.0/0.0,9.0/15.0/0.0,10.0/15.0/0.0,1.0/16.0/0.0,2.0/16.0/1.3111111111111111,3.0/16.0/0.0,4.0/16.0/0.0,5.0/16.0/0.0,6.0/16.0/0.0,7.0/16.0/0.0,8.0/16.0/0.0,9.0/16.0/0.0,10.0/16.0/0.0,1.0/17.0/0.0,2.0/17.0/0.35555555555555557,3.0/17.0/0.0,4.0/17.0/0.0,5.0/17.0/0.0,6.0/17.0/0.0,7.0/17.0/0.0,8.0/17.0/0.0,9.0/17.0/0.0,10.0/17.0/0.0,1.0/18.0/0.0,2.0/18.0/0.044444444444444446,3.0/18.0/0.0,4.0/18.0/0.0,5.0/18.0/0.0,6.0/18.0/0.0,7.0/18.0/0.0,8.0/18.0/0.0,9.0/18.0/0.0,10.0/18.0/0.0,1.0/19.0/0.0,2.0/19.0/0.0,3.0/19.0/0.0,4.0/19.0/0.0,5.0/19.0/0.0,6.0/19.0/0.0,7.0/19.0/0.0,8.0/19.0/0.0,9.0/19.0/0.0,10.0/19.0/0.0,1.0/20.0/0.0,2.0/20.0/0.0,3.0/20.0/0.0,4.0/20.0/0.0,5.0/20.0/0.0,6.0/20.0/0.0,7.0/20.0/0.0,8.0/20.0/0.0,9.0/20.0/0.0,10.0/20.0/0.0,1.0/21.0/0.0,2.0/21.0/0.0,3.0/21.0/0.0,4.0/21.0/0.0,5.0/21.0/0.0,6.0/21.0/0.0,7.0/21.0/0.0,8.0/21.0/0.0,9.0/21.0/0.0,10.0/21.0/0.0,1.0/22.0/0.0,2.0/22.0/0.0,3.0/22.0/0.0,4.0/22.0/0.0,5.0/22.0/0.0,6.0/22.0/0.0,7.0/22.0/0.0,8.0/22.0/0.0,9.0/22.0/0.0,10.0/22.0/0.0,1.0/23.0/0.0,2.0/23.0/0.0,3.0/23.0/0.0,4.0/23.0/0.0,5.0/23.0/0.0,6.0/23.0/0.0,7.0/23.0/0.0,8.0/23.0/0.0,9.0/23.0/0.0,10.0/23.0/0.0,1.0/24.0/0.0,2.0/24.0/0.0,3.0/24.0/0.0,4.0/24.0/0.0,5.0/24.0/0.0,6.0/24.0/0.0,7.0/24.0/0.0,8.0/24.0/0.0,9.0/24.0/0.0,10.0/24.0/0.0,1.0/25.0/500.0,2.0/25.0/0.0,3.0/25.0/0.0,4.0/25.0/0.0,5.0/25.0/0.0,6.0/25.0/0.0,7.0/25.0/0.0,8.0/25.0/0.0,9.0/25.0/0.0,10.0/25.0/0.0}

\newread\myread
\newcommand{\makeplot}[4]{ 
    \begin{tikzpicture}
        \begin{axis}[grid=none,ymin=0,ymax=#3,xmax=#2,xmin=0,xtick=data,ytick=data,minor tick num=1,axis lines = middle,xlabel=number of partitions joined,ylabel=number of cells,x label style={at={(axis description cs:0.5,-0.1)},anchor=north},y label style={at={(axis description cs:-0.1,.5)},rotate=90,anchor=south}]

            \foreach \x/\y/\z in #1 {
                \newdimen\dummyDim 
                \dummyDim = \x pt
                \ifdim \dummyDim > 1pt 
                    \edef\temp{\noexpand\node[fill=black,shape=circle,inner sep=0pt,minimum size=\z*#4 mm] at (\x,\y) {};}
                    \temp
                \else
                \fi
            }
        \end{axis}
    \end{tikzpicture}
}


\title{Characterizing and computing weight-equitable partitions of graphs}

\author[1,2,3]{Aida Abiad}
\author[1]{Christopher Hojny}
\author[1]{Sjanne Zeijlemaker}

\affil[1]{
	Eindhoven University of Technology
Eindhoven, The~Netherlands\newline
	\textit{email:} \{a.abiad.monge, c.hojny, s.zeijlemaker\}@tue.nl
}
\affil[2]{
	Ghent University\\
	Department of Mathematics: Analysis, Logic and Discrete Mathematics\\
	Belgium
}
\affil[3]{
	Vrije Universiteit Brussel\\
	Department of Mathematics and Data Science\\
	 Belgium
}

\date{}

\begin{document}
	
	\maketitle
	
	\begin{abstract}
Weight-equitable partitions of graphs, which are a natural extension of the well-known equitable partitions, have been shown to be a powerful tool to weaken the regularity assumption in several well-known eigenvalue bounds. In this work we aim to further our algebraic and computational understanding of weight-equitable partitions. We do so by showing several spectral properties and algebraic characterizations, and by providing a method to find coarse weight-equitable partitions.
	\end{abstract}
	
	\normalem
	
	\section{Introduction}

	Let~$G = (V,E)$ be a simple undirected connected graph with $n$ vertices, and let~$A$ denote its
	adjacency matrix.
	Many properties of~$G$ such as regularity or bipartiteness can be characterized from the spectrum of~$A$.
	If~$G$ is large, however, investigating the spectrum of~$G$ might be
	cumbersome, which motivates to study ``condensed'' versions of~$A$ that
	preserve properties of its spectrum.
	
	One of the most popular methods to shrink~$A$ is based on equitable
	partitions.
	To define this, let~$\mathcal{P} = \{V_1, \dots, V_m\}$ ($m<n$) be a partition
	of~$V$, and, for~$i, j \in [m] \define \{1,\dots,m\}$ and~$u \in V_i$,
	let~$b_{ij}(u)$ be the number of neighbors of~$u$ in~$V_j$.
	The partition~$\mathcal{P}$ is called \emph{equitable} (or \emph{regular}) if~$b_{ij}(u)$ is
	independent from the concrete choice of~$u \in V_i$, i.e., $b_{ij}(u) =
	b_{ij}(v)$ for all~$u, v \in V_i$.
        In this case, the matrix $B=(b_{ij})_{i,j \in [m]}$ is called the \emph{quotient matrix} of $\mathcal{P}$.
        Since it is known that for an
        equitable partition the eigenvalues of $B$ are also eigenvalues of $A$, see Godsil and Royle \cite{GR2001}, some spectral properties of~$B$ carry over to~$A$.
	Equitable partitions have been proven to be useful to derive, among
        many others, sharp eigenvalue bounds on the independence number
        like the celebrated ratio bound by Hoffman~\cite{H1970}, but such results only hold when the underlying graph is regular.
	
	To be able to derive graph properties from the spectrum also in the broader
	context of general graphs, a natural generalization of equitable partitions is to assign each
	vertex of~$G$ a weight such that~$G$ is ``weight regularized'', which leads
	to the concept of weight-equitable partitions. Weight-equitable partitions have been shown to be a powerful tool to extend several classical results for non-regular graphs.
	They were first used by Haemers in~1979 \cite[Theorem~6]{BS1979} to provide
	a proof of Hoffman's lower bound for the chromatic
	number of a general graph, weakening the regularity assumption required in the well-known Hoffman result on the independence number \cite{H1970}.
	Such a bound for general graphs has recently been extended to the distance
	$k$-chromatic number also using weight-equitable partitions, see~\cite[Theorem~4.3]{acfns2020}.
	Fiol and Garriga \cite{FG1999, F1999} used them to obtain several sharp spectral
	bounds for parameters of non-regular graphs.
	Examples of such results are an extension of Hoffman's ratio bound for the
	chromatic number or a generalization of the Lov\'asz bound for the Shannon
	capacity of a graph.
	Moreover, Fiol \cite{F1999} used weight-equitable partitions to show that a bound
	for the weight-independence number is best possible, and Fiol and Garriga  \cite{FG1999} used them to
	obtain spectral characterizations of distance-regularity around a set and
	spectral characterizations of completely regular codes.
	Recently, new algebraic characterizations of
	weight-equitable partitions and a new application of such partitions to improve the
	classical Hoffman's ratio bound were shown in \cite{A2019}.

Note that there is a trade-off between the two main goals
	of (weight-) equitable partitions.
	On the one hand, the coarser the partition~$\mathcal{P}$ is (i.e., the smaller~$m$), the smaller the subset of the spectrum of~$A$
	that can be recovered from the spectrum of~$B$.
	On the other hand,  the finer the partition ~$\mathcal{P}$ is (i.e., the larger~$m$), the more information on the spectrum of $A$ that can be recovered from
	the spectrum of $B$. Depending on the aimed result, one might be interested in either of the two
	extremes. If one is mostly concerned about shrinking~$A$ as much as
	possible, one is interested in finding a coarsest (weight-) equitable
	partition. For instance, for bounding the independence number of a graph, one only needs a weight partition with $m=2$ cells (see Fiol \cite{F1999}), but for characterizing pseudo distance-regular graphs one needs to weight partition the graph into $m=d+1$ cells, where $d+1$ is the number of distinct eigenvalues (see Fiol \cite{F2001}).
Another application of the coarsest equitable partition in linear programming was shown by Grohe et al.~\cite{GKMS2014}.
While for the equitable case,  Bastert~\cite{B1999} showed that a coarsest equitable partition can be found
	very efficiently,  we are not aware of any result
	in this direction for weight-equitable partitions.

	The aim of this article is thus to better understand weight-equitable partitions from the algebraic and computational point of view,
	and to develop means to find coarse weight-equitable partitions.
	To this end, we derive novel algebraic characterizations of weight-equitable
	partitions.
        Several known characterizations of equitable partitions follow as a corollary of our results.
	Moreover, we devise an operator that turns fine weight-equitable partitions
	into coarser ones.
	Since devising an algorithm to find a coarsest non-trivial weight-equitable
	partition is open, we investigate the potential of this operator in
	producing coarse partitions via computational experiments.
	Our computational results show that this operator is able to produce very coarse
	partitions in many cases, allowing to achieve a significant reduction of
	the size of~$A$.

	The outline of this article is as follows.
        Section~\ref{sec:weightquotient} introduces our notation as well as
        basic definitions.
        In Section~\ref{sec:properties}, we derive spectral
        properties of weight-equitable partitions, whereas
        Section~\ref{sec:characterizations} provides novel characterizations
        of weight-equitable partitions and operators to generate them.
        Section~\ref{sec:algo} investigates the potential of one such
        operator to produce coarse weight-equitable partitions.

	\section{Basic Definitions and Notation}\label{sec:weightquotient}
	
	Throughout this article, we denote by~$\J$ an all-ones matrix and
        by~$\j$ an all-ones vector whose dimensions will be clear from the
        context.
        The~$i$-th canonical vector (of suitable dimension) is denoted
        by~$e_i$, and~$\norm{\cdot}{}$ denotes the Euclidean norm of a vector.
        Moreover, for a finite set~$V$, we denote its powerset by~$\mathfrak{P}(V)$.
        For a simple undirected connected graph~$G = (V, E)$, we denote by~$n = |V|$
        the number of its vertices and by~$A = A(G)$ its adjacency matrix.
        Throughout this article, we always assume~$G$ to be simple, undirected, and connected if not stated differently.
        Moreover, we assume the vertices to be labeled~$1, \dots, n$, i.e., $V = [n]$.
        The set~$G(u)$ denotes the neighborhood of a vertex~$u \in V$, i.e.\ the set of vertices adjacent to $u$, and
        we write~$u \sim v$ if~$u,v \in V$ are adjacent.
        The automorphism group of~$G$ is denoted by~$\text{Aut}(G)$.

        The eigenvalues of~$A$ are given by~$\lambda_1, \dots, \lambda_n$,
        and we assume from now on that the eigenvalues are sorted
        non-increasingly, i.e., $\lambda_1 \geq \dots \geq \lambda_n$.
        We denote the \emph{spectrum} of~$G$ by
        \[
          \sp(G) \define \sp(A) \define
          \{\theta_0^{m_0}, \theta_1^{m_1}, \dots, \theta_d^{m_d}\},
        \]
        where~$\theta_0 > \theta_1 > \dots > \theta_d$ are the distinct
        eigenvalues of~$A$ in decreasing order with multiplicities~$m_i =
        m(\theta_i)$, $i \in \{0\} \cup [d]$.
        Note that $\theta_0=\lambda_1$ and $\theta_d=\lambda_n$.
	Since $G$ is connected (so $A$ is irreducible), the Perron-Frobenius
        Theorem assures that $\lambda_{1}$ is simple, positive, and has a
        positive eigenvector.
        If $G$ is disconnected, the existence of such an eigenvector is not
        guaranteed, unless all its connected components have the same
        maximum eigenvalue.
        Throughout this work, the positive eigenvector associated with the largest
	(positive and with multiplicity one) eigenvalue $\lambda_{1}$ is
	denoted by $\nu=(\nu_{1},\ldots,\nu_{n})^{\top}$.
        This eigenvector is called the \emph{Perron eigenvector}, and
        we assume it to be normalized such that its minimum
        entry is $1$.
	For instance, if $G$ is regular, we have $\nu=\j$.
		
        To be able to define weight-equitable partitions of a connected
        simple graph~$G$ with Perron eigenvector~$\nu$, we consider the
        map~$\rho\colon \mathfrak{P}(V) \to \R^n$, defined by $\rho(U) \define
        \sum_{u\in U} \nu_{u} e_{u}$ for any $U\neq \emptyset$.
            By convention, $\rho(\emptyset) = 0$, and we write~$\rho(u)$
        instead of~$\rho(\{u\})$.
        Since~$\rho$ is linear, we can interpret it to assign each~$u \in
        V$ the weight~$\rho(u) = \nu_u$.
        Doing so, we ``regularize'' the graph, in the sense that the
        \emph{weight-degree} $\delta_u^{\ast}$ of each vertex $u\in V$
        becomes a constant, where
        \[
          \delta^{*}_{u}
          \define
          \frac{1}{\nu_{u}}\sum_{v\in G (u)}\nu_{v}
          =
          \lambda_{1}.
        \]	
	If $\mathcal{P}$ is a partition of the vertex set $V=V_{1}\cup
	\cdots \cup V_{m}$, the \emph{weight-intersection number}
        of $u\in V_{i}$, $i \in [m]$, is
        \begin{align*}
          b^{*}_{ij}(u)
          &\define
          \frac{1}{\nu_{u}}\sum_{v \in G(u)\cap V_{j}}\nu_{v},
          && i,j \in [m].
        \end{align*}
	Observe that the sum of the weight-intersection numbers for all
	$j \in [m]$ gives the weight-degree of each vertex $u\in
	V_{i}$:
	\[
          \sum_{j=1}^{m}b^{*}_{ij}(u)=\frac{1}{\nu_{u}}\sum_{v \in
            G(u)}\nu_{v}=\delta^{*}_{u}=\lambda_{1}.
        \]
        Using these definitions, we are now able to define weight-equitable
        partitions.
        \begin{definition}
          Let~$G$ be a connected simple graph and let~$\mathcal{P}
          = (V_1, \dots, V_m)$ be a partition of~$V$.
          Then, $\mathcal{P}$ is called \emph{weight-equitable} (or
          \emph{weight-regular)} if~$b^*_{ij}(u) = b^*_{ij}(v)$ for
          all~$i,j \in [m]$ and~$u,v \in V_i$.
          That is, the weight-intersection numbers do not depend on the
          vertex~$u \in V_i$.
          In this case, we write~$b^*_{ij}$ instead of~$b^{*}_{ij}(u)$, $u
          \in V_i$.
        \end{definition}

	A matrix characterization of weight-equitable partitions can be done via the following matrix associated with any partition $\mathcal{P}$.
    The \emph{weight-characteristic matrix} of $\mathcal{P}$ is the $n\times m$ matrix $\tilde{S}^{*}=(\tilde{s}^{*}_{uj})$ with entries
	\[
	\tilde{s}^{*}_{uj}
	=
	\begin{cases}
		\nu_u, & \text{if } u \in V_j,\\
		0, & \text{otherwise},
	\end{cases}
	\]
	for all $(u,j)\in V\times [m]$ and, hence, satisfying $(\tilde{S}^{*})^{\top}\tilde{S}^{*}=D^{2}$, where $D=\diag (\|\rho(V_{1})\|,\ldots,\|\rho (V_{m})\|)$.

	From such a weight-characteristic matrix we define the \emph{weight-quotient matrix} of $A$, with respect to
	$\mathcal{P}$, as $\tilde{B}^{*} \define (\tilde{S}^{*})^{\top}A\tilde{S}^{*}=(\tilde{b}^{*}_{ij})$.
	Notice that this matrix is symmetric and has entries
	\begin{align*}
		\tilde{b}^{*}_{ij}=\sum_{u,v \in
			V}\tilde{s}_{ui}^{*}a_{uv}\tilde{s}_{vj}^{*}=\sum_{u \in
			V_{i},v \in V_{j}}a_{uv}\nu_{u}\nu_{v}=\sum_{uv \in
			E(V_{i},V_{j})}\nu_{u}\nu_{v}=\tilde{b}^{*}_{ji},
	\end{align*}
	where $E(V_{i},V_{j})$ stands for the set of edges with ends in $V_{i}$ and $V_{j}$ (when $V_{i}=V_{j}$ each edge counts
	twice).

	In this article we will use the \emph{normalized weight-characteristic matrix} of $\mathcal{P}$, which is the $n\times m$ matrix $\bar{S}^{*}=(\bar{s}^{*}_{uj})$ with entries obtained by normalizing the columns of $\tilde{S}^{*}$, that is, $\bar{S}^{*}=\tilde{S}^{*}D^{-1}$.
Thus,
	\[
        \bar{s}^{*}_{uj}
        =
        \begin{cases}
          \frac{\nu_{u}}{\|\rho (V_{j})\|}, & \text{if } u\in V_{j},\\
          0, & \text{otherwise},
        \end{cases}
        \]	
	and it holds that $(\bar{S}^{*})^{\top}\bar{S}^{*}=I$.
    We define the \emph{normalized weight-quotient matrix} of $A$ with respect to $\mathcal{P}$, $\bar{B}^{*}=(\bar{b}^{*}_{ij})_{i,j \in [m]}$, as
    \begin{equation*}
      \bar{B}^{*}=(\bar{S}^{*})^{\top}A\bar{S}^{*}=D^{-1}(\tilde{S}^{*})^{\top}A\tilde{S}^{*}D^{-1}=D^{-1}\tilde{B}^{*}D^{-1},
    \end{equation*}	
    and hence $\bar{b}^{*}_{ij}=\frac{\tilde{b}^{*}_{ij}}{\|\rho (V_{i})\|\|\rho (V_{j})\|}.$
	
    \begin{table}
      \caption{Some particular cases of trivial partitions. Note that for weight-equitable partitions the coarsest partition is always trivial, while the finest partition is trivial for regular graphs.}\label{table:relations}
      \centering
      \begin{tabular*}{0.8\textwidth}{@{}l@{\;\;\extracolsep{\fill}}lr@{}}\toprule
        & \multicolumn{2}{c}{graph class admitting \dots partition with~$m$ cells}\\
        \cmidrule{2-3}
        number of cells~$m$ & equitable & weight-equitable\\
        \midrule
        1 & $\Longleftrightarrow$ regular & all\\
        2 & biregular & bipartite \\
        $n$ & all & $\Longleftrightarrow$ regular\\
        \bottomrule
      \end{tabular*}
    \end{table}
	
	In Table \ref{table:relations}, some trivial cases of (weight-) equitable partitions are summarized.
Note that for $m=2$, it does not hold that a partition into two sets is weight-equitable if and only if it is a bipartition of the graph;
there may be many other weight-equitable partitions and a graph which admits one may not be bipartite.
For example, the path graph $P_4$ on~4 vertices has two weight-equitable partitions:
its bipartition and the partition which groups the two endpoints and internal vertices.
However, a bipartition is always weight-equitable.

	The following characterization of weight-equitable partitions by the first author \cite{A2019} will be used to prove our main results.
	
	\begin{lemma}[\cite{A2019}]\label{lemma:aidafirstcharacterization}
		Let $A$ be the adjacency matrix of a connected graph $G$, and let $P$ be a weight-equitable partition of the vertex set of $G$ with normalized weight-characteristic matrix $\bar{S}^{*}$.
Then, $P$ is weight-equitable if and only if $A$ and $\bar{S}^{*}(\bar{S}^{*})^\top$ commute.\label{lem:commuteiffwr}
	\end{lemma}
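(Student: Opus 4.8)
The statement should be read with ``$P$ a partition of~$V$'' as hypothesis (every partition has a normalized weight-characteristic matrix~$\bar{S}^{*}$), the conclusion being the announced equivalence. The plan is to recognize $\bar{S}^{*}(\bar{S}^{*})^{\top}$ as the orthogonal projector onto the subspace
\[
  \mathcal{R} \define \operatorname{span}\{\rho(V_{1}),\dots,\rho(V_{m})\},
\]
and then to pass through the chain ``$P$ weight-equitable $\iff$ $A\,\mathcal{R}\subseteq\mathcal{R}$ $\iff$ $A$ and $\bar{S}^{*}(\bar{S}^{*})^{\top}$ commute''. For the identification: connectivity guarantees $\nu>0$, so each $\rho(V_{j})$ is nonzero; these vectors have pairwise disjoint supports, hence are orthogonal, and dividing $\rho(V_{j})$ by $\|\rho(V_{j})\|$ yields exactly the $j$-th column of~$\bar{S}^{*}$, so $\operatorname{col}(\bar{S}^{*})=\mathcal{R}$. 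Since $(\bar{S}^{*})^{\top}\bar{S}^{*}=I$ (recorded in Section~\ref{sec:weightquotient}), $Q\define\bar{S}^{*}(\bar{S}^{*})^{\top}$ satisfies $Q^{\top}=Q$ and $Q^{2}=Q$, i.e., $Q$ is the orthogonal projector onto~$\mathcal{R}$.

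For the first equivalence I would start from the remark that, for $u\in V_{i}$, the defining formula of the weight-intersection number rewrites as
\[
  b^{*}_{ij}(u)=\frac{1}{\nu_{u}}\sum_{v\in G(u)\cap V_{j}}\nu_{v}=\frac{1}{\nu_{u}}\bigl(A\rho(V_{j})\bigr)_{u},
\]
using $\rho(V_{j})=\sum_{v\in V_{j}}\nu_{v}e_{v}$. If $P$ is weight-equitable, then comparing coordinates gives $A\rho(V_{j})=\sum_{i}b^{*}_{ij}\rho(V_{i})\in\mathcal{R}$ for each~$j$, hence $A\,\mathcal{R}\subseteq\mathcal{R}$ by linearity. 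Conversely, if $A\,\mathcal{R}\subseteq\mathcal{R}$, write $A\rho(V_{j})=\sum_{k}c_{kj}\rho(V_{k})$; evaluating the $u$-th coordinate for $u\in V_{i}$ and using that the $u$-th entry of $\rho(V_{k})$ equals $\nu_{u}$ if $u\in V_{k}$ and $0$ otherwise, we obtain $\bigl(A\rho(V_{j})\bigr)_{u}=c_{ij}\nu_{u}$, so $b^{*}_{ij}(u)=c_{ij}$ is independent of the choice of $u\in V_{i}$; that is, $P$ is weight-equitable.

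For the second equivalence I would use the standard fact that a symmetric matrix commutes with the orthogonal projector onto a subspace exactly when the subspace is $A$-invariant. If $AQ=QA$ and $x\in\mathcal{R}$, then $Ax=AQx=QAx\in\mathcal{R}$, so $A\,\mathcal{R}\subseteq\mathcal{R}$. Conversely, symmetry of~$A$ together with $A\,\mathcal{R}\subseteq\mathcal{R}$ forces $A\,\mathcal{R}^{\perp}\subseteq\mathcal{R}^{\perp}$ (for $y\in\mathcal{R}^{\perp}$, $x\in\mathcal{R}$ one has $\langle Ay,x\rangle=\langle y,Ax\rangle=0$); then on $\mathcal{R}$ both $AQ$ and $QA$ act as $A$, and on $\mathcal{R}^{\perp}$ both vanish, so $AQ=QA$ on $\R^{n}=\mathcal{R}\oplus\mathcal{R}^{\perp}$.

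I do not expect a genuine obstacle: the only point needing a little care is the coordinate bookkeeping in the converse of the first equivalence, namely that the $u$-th entry of $\rho(V_{k})$ is $\nu_{u}$ when $u\in V_{k}$ and $0$ otherwise, which is precisely what collapses the passage between ``$A\rho(V_{j})\in\mathcal{R}$'' and ``the weight-intersection numbers are vertex-independent'' to a one-line coordinate comparison. Everything else is a restatement of the definitions from Section~\ref{sec:weightquotient} or the elementary projector fact above.
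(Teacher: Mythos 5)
Your proof is correct. Note that the paper itself does not prove this lemma but imports it from~\cite{A2019}; your argument --- identifying $\bar{S}^{*}(\bar{S}^{*})^{\top}$ as the orthogonal projector onto $\operatorname{span}\{\rho(V_1),\dots,\rho(V_m)\}$, showing that weight-equitability is equivalent to $A$-invariance of that span via the identity $b^{*}_{ij}(u)=\frac{1}{\nu_u}(A\rho(V_j))_u$, and then invoking the standard fact that a symmetric matrix commutes with an orthogonal projector exactly when the projected subspace is invariant --- is the standard route and is sound, including your (necessary) correction of the circular phrasing of the hypothesis.
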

	
    In \cite{A2019}, it is shown that weight-regular partitions can be used to improve the well-known Hoffman ratio bound on the chromatic number of a graph.
A graph coloring which satisfies this bound with equality is referred to as a \emph{Hoffman coloring}.
We take the opportunity to correct the statement of Proposition 5.3 (ii) in \cite{A2019}, which should say that if a graph $G$ has chromatic number $\chi(G)$ and a Hoffman coloring, then it holds that the multiplicity of the smallest eigenvalue $\lambda_n$ is at least $\chi(G)-1$ (and not only equal), and equality implies a unique Hoffman coloring.
	
	\section{Spectral Properties of Weight-Equitable Partitions}\label{sec:properties}

        As mentioned previously, the aim of weight-equitable partitions is to condense the adjacency matrix of an undirected graph to make assessing its spectrum easier.
        This section is devoted to, on the one hand, derive properties of the condensed adjacency matrix that are independent from the weight-equitable partition.
        That is, these results provide conditions that are necessary for a partition to be weight-equitable.
        On the other hand, we give new insights into the relation of weight-equitable and equitable partitions by
        providing a necessary criterion such that weight-equitable partitions are also equitable.
	
	\begin{theorem}\label{theo:Godsilbookcoro2.3extendedWR}
          Let~$G$ be a connected graph with adjacency matrix~$A$.
          Let $\lambda_{1}\geq \lambda_{2}\geq \cdots \geq \lambda_{n}$ be the eigenvalues of~$A$ and $\mathcal{P}$ be a weight-equitable partition for $A$.
          Let $\bar{B}^{*}$ be the normalized weight-quotient matrix of $A$ with respect to
          $\mathcal{P}$, with eigenvalues $\mu_{1}\geq \mu_{2}\geq \cdots
          \geq \mu_{m}$, $m< n$. Then, $\lambda_{1}=\mu_{1}$.
	\end{theorem}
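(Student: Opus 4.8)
The plan is to prove the two inequalities $\mu_{1}\ge\lambda_{1}$ and $\mu_{1}\le\lambda_{1}$ separately: the first by exhibiting $\lambda_{1}$ as an explicit eigenvalue of $\bar{B}^{*}$, and the second by eigenvalue interlacing.

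For $\mu_{1}\ge\lambda_{1}$, the point I would exploit is that the $j$-th column of the normalized weight-characteristic matrix $\bar{S}^{*}$ is precisely $\rho(V_{j})/\|\rho(V_{j})\|$, so the column space of $\bar{S}^{*}$ equals $\mathrm{span}\{\rho(V_{1}),\dots,\rho(V_{m})\}$. Since $\nu=\rho(V)=\sum_{j=1}^{m}\rho(V_{j})$, the Perron eigenvector $\nu$ lies in the column space of $\bar{S}^{*}$. Because $(\bar{S}^{*})^{\top}\bar{S}^{*}=I$, the matrix $\Pi\define\bar{S}^{*}(\bar{S}^{*})^{\top}$ is the orthogonal projector onto that column space, hence $\Pi\nu=\nu$. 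Setting $y\define(\bar{S}^{*})^{\top}\nu$, we get $\|y\|^{2}=\nu^{\top}\Pi\nu=\|\nu\|^{2}>0$, so $y\neq 0$, and using $A\nu=\lambda_{1}\nu$ one computes
\[
\bar{B}^{*}y=(\bar{S}^{*})^{\top}A\bar{S}^{*}(\bar{S}^{*})^{\top}\nu=(\bar{S}^{*})^{\top}A\Pi\nu=(\bar{S}^{*})^{\top}A\nu=\lambda_{1}(\bar{S}^{*})^{\top}\nu=\lambda_{1}y.
\]
Thus $\lambda_{1}$ is an eigenvalue of $\bar{B}^{*}$, so $\mu_{1}\ge\lambda_{1}$.

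For $\mu_{1}\le\lambda_{1}$, I would invoke Cauchy interlacing: since $\bar{S}^{*}$ is an $n\times m$ matrix with orthonormal columns, the eigenvalues of $\bar{B}^{*}=(\bar{S}^{*})^{\top}A\bar{S}^{*}$ interlace those of $A$, and in particular $\mu_{1}\le\lambda_{1}$. (Equivalently, for any $z\neq 0$, writing $x=\bar{S}^{*}z$ gives $z^{\top}z=x^{\top}x$ and $z^{\top}\bar{B}^{*}z=x^{\top}Ax$, so the largest Rayleigh quotient of $\bar{B}^{*}$ is bounded by that of $A$, namely $\lambda_{1}$.) Combining the two inequalities yields $\mu_{1}=\lambda_{1}$.

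I do not expect a genuine obstacle here; the only step that needs care is the claim $\nu\in\mathrm{col}(\bar{S}^{*})$, which is exactly where using the Perron eigenvector in the definition of $\bar{S}^{*}$ (rather than an arbitrary positive vector) is essential, together with the verification that $y\neq 0$. Note that the argument uses only $(\bar{S}^{*})^{\top}\bar{S}^{*}=I$ and $A\nu=\lambda_{1}\nu$, not weight-equitability; if a proof through Lemma~\ref{lemma:aidafirstcharacterization} is preferred, one can instead observe that $A$ and $\Pi$ commute, so $A$ leaves $\mathrm{col}(\bar{S}^{*})$ invariant, and then read $\lambda_{1}$ off the restriction of $A$ to this invariant subspace — but the computation above is more economical.
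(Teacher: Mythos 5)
Your proof is correct, but it takes a genuinely different route from the paper's. The paper argues top-down from $\bar{B}^{*}$: since $G$ is connected, $\bar{B}^{*}$ is a nonnegative irreducible matrix, so it has a positive Perron eigenvector $y$ for $\mu_{1}$; the weight-equitable analogue of Godsil's lifting lemma gives $A\bar{S}^{*}=\bar{S}^{*}\bar{B}^{*}$, so $x=\bar{S}^{*}y$ is a \emph{positive} eigenvector of $A$ with eigenvalue $\mu_{1}$, and Perron--Frobenius (only the spectral radius of an irreducible nonnegative matrix admits a positive eigenvector) forces $\mu_{1}=\lambda_{1}$. You instead push the Perron eigenvector of $A$ \emph{down}, using that $\nu=\sum_{j}\rho(V_{j})$ lies in $\operatorname{col}(\bar{S}^{*})$ by construction, and close the other inequality with interlacing. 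Your argument is more elementary (no irreducibility of $\bar{B}^{*}$, no uniqueness part of Perron--Frobenius) and, as you correctly note, it establishes the stronger fact that $\mu_{1}=\lambda_{1}$ for the normalized weight-quotient matrix of \emph{any} partition, weight-equitable or not --- the hypothesis of weight-equitability in the theorem is not actually needed for this particular conclusion. What the paper's route buys in exchange is the intermediate identity $A\bar{S}^{*}=\bar{S}^{*}\bar{B}^{*}$, which shows that for weight-equitable partitions \emph{every} eigenvalue of $\bar{B}^{*}$ (not just the largest) is an eigenvalue of $A$; that stronger interlacing-free inclusion is what genuinely uses weight-equitability and is what the authors want available for the rest of the paper. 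Both proofs are sound; yours is a clean alternative and its generality is worth recording explicitly if you write it up.
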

	
	\begin{proof}
          Since $G$ is connected, $A$ is an irreducible matrix, which means that $B$ is also irreducible.
          Let $y=(y_{1},\ldots,y_{m})$ be a positive  eigenvector to $\mu_{1}$ and let $\bar{S}^{*}$ denote the normalized weight-characteristic matrix of $\Pa$.
          Note that Lemma 2.2 (a) from \cite{Cd40} can easily be extended to weight-equitable partitions if we replace $B$ and $P$ by $\bar{B}^{*}$ and $\bar{S}^{*}$.
          Then the vector $x=\bar{S}^{*}y$ is a positive vector such that $Ax=\mu_{1}x$, implying that $\mu_{1}$ is an eigenvalue of $A$ with eigenvector $x$.
          Perron-Frobenius Theorem implies that $\mu_{1}=\lambda_{1}$, completing the proof.
        \end{proof}

	As consequence of Theorem \ref{theo:Godsilbookcoro2.3extendedWR}, we obtain the analogous result for equitable partitions of graphs \cite[Corollary 2.3]{Cd40}, which states that the quotient matrix of an equitable partition has the same spectral radius as the adjacency matrix. We should observe that the above result also holds for non-negative symmetric matrices, see \cite[Theorem 2.1]{mscthesisaida}.

        Because of Theorem~\ref{theo:Godsilbookcoro2.3extendedWR} we know that the largest eigenvalue of~$A$ and~$\bar{B}^*$ are the same for any weight-equitable partition of~$V$.
        Based on this result, we can devise a necessary criterion for a partition to be weight-equitable that can be tested by evaluating a single matrix-vector multiplication.

	\begin{lemma}\label{lemma:masterthesis11}
          Let~$G$ be a connected graph and~$\{V_1,\dots,V_m\}$ be a weight-equitable partition of its vertex set.
          Let~$\lambda_1$ be the largest eigenvalue of~$G$.
          Then, the corresponding normalized weight-quotient matrix~$\bar{B}^*$ has
          eigenvector $x=\left(\|\rho(V_1)\|,\dots,\|\rho(V_m)\| \right)^{\top}$ with eigenvalue $\lambda_1$.
	\end{lemma}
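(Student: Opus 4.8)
The plan is to verify the eigenvector equation $\bar{B}^{*}x=\lambda_1 x$ directly by unwinding the definitions of the matrices involved. Write $\j$ for the all-ones vector in $\R^{m}$ and recall $D=\diag(\|\rho(V_1)\|,\dots,\|\rho(V_m)\|)$, so that $x=D\j$. Since $\bar{B}^{*}=D^{-1}\tilde{B}^{*}D^{-1}$, we have $\bar{B}^{*}x=D^{-1}\tilde{B}^{*}D^{-1}(D\j)=D^{-1}\tilde{B}^{*}\j$, and therefore the claim $\bar{B}^{*}x=\lambda_1 x=\lambda_1 D\j$ is equivalent to the single identity $\tilde{B}^{*}\j=\lambda_1 D^{2}\j$. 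The whole proof reduces to establishing this.

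To do so I would use two elementary observations about the (unnormalized) weight-characteristic matrix. First, the columns of $\tilde{S}^{*}$ are precisely $\rho(V_1),\dots,\rho(V_m)$; hence, by linearity of $\rho$ and because $\{V_1,\dots,V_m\}$ partitions $V$, $\tilde{S}^{*}\j=\sum_{j=1}^{m}\rho(V_j)=\rho(V)=\nu$, the Perron eigenvector. Second, the $i$-th entry of $(\tilde{S}^{*})^{\top}\nu$ equals $\rho(V_i)^{\top}\nu=\sum_{u\in V_i}\nu_u^{2}=\|\rho(V_i)\|^{2}$, so that $(\tilde{S}^{*})^{\top}\nu=D^{2}\j$.

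With these in hand the computation is a one-liner: using $\tilde{B}^{*}=(\tilde{S}^{*})^{\top}A\tilde{S}^{*}$ and $A\nu=\lambda_1\nu$ from the Perron--Frobenius theorem,
\[
  \tilde{B}^{*}\j=(\tilde{S}^{*})^{\top}A\bigl(\tilde{S}^{*}\j\bigr)=(\tilde{S}^{*})^{\top}A\nu=\lambda_1(\tilde{S}^{*})^{\top}\nu=\lambda_1 D^{2}\j,
\]
which is exactly the required identity. Note that $x=D\j$ is nonzero because every $\|\rho(V_i)\|>0$, so $x$ is a genuine eigenvector.

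There is no real obstacle here; the only points to be careful about are the conjugation by $D$ that passes between $\bar{B}^{*}$ and $\tilde{B}^{*}$ and the identification $\tilde{S}^{*}\j=\nu$. It is perhaps worth remarking that this argument never uses that $\mathcal{P}$ is weight-equitable: the identity $\bar{B}^{*}\bigl(\|\rho(V_1)\|,\dots,\|\rho(V_m)\|\bigr)^{\top}=\lambda_1\bigl(\|\rho(V_1)\|,\dots,\|\rho(V_m)\|\bigr)^{\top}$ holds for an arbitrary partition. What the weight-equitable hypothesis contributes, in combination with Theorem~\ref{theo:Godsilbookcoro2.3extendedWR} and the irreducibility of $\bar{B}^{*}$, is that $\lambda_1$ is in fact the \emph{largest} eigenvalue of $\bar{B}^{*}$, so that $x$ is (up to scaling) its Perron eigenvector.
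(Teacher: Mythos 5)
Your proof is correct and is in substance the same as the paper's: the paper verifies $(\bar{B}^*x)_i=\lambda_1 x_i$ entrywise using $\tilde{b}^*_{ij}=\sum_{u\in V_i}\nu_u^2 b^*_{ij}(u)$ together with $\sum_{j=1}^m b^*_{ij}(u)=\delta^*_u=\lambda_1$, which is exactly your identity $\tilde{B}^{*}\j=\lambda_1 D^{2}\j$ written out coordinate by coordinate. Your closing remark is also accurate---the paper's computation likewise never invokes weight-equitability, so the eigenvector identity holds for an arbitrary partition.
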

	\begin{proof}
          For $i \in [n]$, the entries of $\bar{B}^*x$ equal
          \begin{align*}
            \left(\bar{B}^* x\right)_i = \sum_{j=1}^m \bar{b}_{ij}^*\|\rho(V_j)\|
            &= \sum_{j=1}^m \frac{\sum_{u\in V_i} \nu_u^2 b_{ij}^*(u)}{\| \rho(V_i)\|\|\rho(V_j)\|}\| \rho(V_j)\|\\
            &= \sum_{u\in V_i} \frac{\nu_u^2 }{\|\rho(V_i)\|}\sum_{j=1}^m b_{ij}^*(u)
            = \lambda_1 \|\rho(V_i)\|
            = \lambda_1 x_i.
          \end{align*}
	\end{proof}
	
	One of our main goals is to investigate means to find coarse weight-equitable partitions.
        Since every equitable partition is also weight-equitable (which follows trivially from~\cite[Lemma 2.2]{mscthesisaida}), we can use for example Bastert's algorithm~\cite{B1999} to compute a lower bound on the coarseness of a weight-equitable partition.
	One might thus wonder whether the converse can also be true, i.e., whether the coarsest non-trivial weight-equitable partition is also equitable.
        In general, this is not the case, but we are able to provide a necessary criterion.
	
	\begin{proposition}\label{lemma:weightequitableentriedPerron}
		Let $G$ be a connected graph with adjacency matrix $A$ and positive eigenvector~$\nu$,
		and consider a weight-equitable partition of the vertex set $\mathcal{P}=\{V_{1},\ldots,V_{m}\}$ with normalized weight-characteristic matrix $\bar{S}^\ast$.
                Then, $\nu=(\nu_1 \j^\top, \dots, \nu_m \j^\top)^\top$, with $\1$'s being all-one
                vectors of length $|V_i|$ for $i \in [m]$, if and only if $\mathcal{P}$ is equitable.
	\end{proposition}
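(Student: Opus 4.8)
The plan is to prove the two implications separately, using in each one a standard consequence of the Perron--Frobenius theorem together with the defining relations of the partition.

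First I would treat ``$\mathcal{P}$ equitable $\Rightarrow\nu$ constant on the cells of $\mathcal{P}$'' by the usual lifting of eigenvectors of the quotient matrix. Let $B=(b_{ij})$ be the quotient matrix of the equitable partition $\mathcal{P}$ and let $S$ be its characteristic matrix, i.e.\ the $n\times m$ matrix with $s_{uj}=1$ if $u\in V_j$ and $s_{uj}=0$ otherwise, so that $AS=SB$. Since $G$ is connected, the quotient digraph on $[m]$ (with an arc $i\to j$ whenever $b_{ij}>0$) is strongly connected, so $B$ is an irreducible nonnegative matrix and by Perron--Frobenius has a strictly positive eigenvector $y$ for its spectral radius $\rho(B)$. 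Then $x\define Sy$ is strictly positive, because every row of $S$ has exactly one nonzero entry, and $Ax=ASy=SBy=\rho(B)x$; hence $\rho(B)$ is an eigenvalue of $A$ with a positive eigenvector, and Perron--Frobenius applied to the irreducible matrix $A$ forces $\rho(B)=\lambda_1$ and $x$ to be a positive scalar multiple of $\nu$. Since $x=Sy$ is constant on each $V_i$ (with value $y_i$), so is $\nu$, which is exactly the asserted form $\nu=(\nu_1\j^\top,\dots,\nu_m\j^\top)^\top$. (One could instead quote $\rho(B)=\lambda_1$ directly from \cite[Corollary~2.3]{Cd40}.)

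For the converse I would argue directly from the weight-equitability of $\mathcal{P}$. Assume $\nu_u=\nu_i$ for all $u\in V_i$ and all $i\in[m]$, and write $b_{ij}(u)\define|G(u)\cap V_j|$ for the ordinary intersection number. For $u\in V_i$,
\[
  b^*_{ij}(u)=\frac{1}{\nu_u}\sum_{v\in G(u)\cap V_j}\nu_v
  =\frac{\nu_j}{\nu_i}\,|G(u)\cap V_j|
  =\frac{\nu_j}{\nu_i}\,b_{ij}(u).
\]
Because $\mathcal{P}$ is weight-equitable, the left-hand side is independent of $u\in V_i$; since $\nu_j/\nu_i$ is a fixed positive constant, so is $b_{ij}(u)=\frac{\nu_i}{\nu_j}b^*_{ij}(u)$, and therefore $\mathcal{P}$ is equitable.

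Both computations are routine; the only points that need care are the two appeals to the Perron--Frobenius theorem. In the first implication one uses that $B$ is irreducible (a consequence of the connectivity of $G$) so that its spectral radius has a strictly positive eigenvector which can be lifted along $S$, and throughout one uses that $\nu$ has only positive entries, so that dividing by $\nu_u$ is legitimate and $\nu_j/\nu_i$ is a genuine nonzero constant.
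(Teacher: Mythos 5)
Your proposal is correct and follows essentially the same route as the paper: the forward direction compares $b^*_{ij}(u)$ for $u,v$ in the same cell and cancels the constant factor $\nu_j/\nu_i$, and the reverse direction lifts the Perron eigenvector of the quotient matrix $B$ through the characteristic matrix $S$ via $AS=SB$. You simply spell out more carefully than the paper why the lifted vector must be the Perron eigenvector of $A$ (irreducibility of $B$, positivity of $Sy$, and uniqueness in Perron--Frobenius), which is a welcome addition rather than a deviation.
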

	
\begin{proof}
		
Let $V_i,\ V_j\in \Pa$ and let $u,v\in V_i$ be arbitrary. Since $\Pa$ is weight-equitable, it must hold that
		\[b^{*}_{ij}(u) = \frac{1}{\nu_{u}}\sum_{w \in G(u)\cap
			V_{j}}\nu_{w} = \frac{1}{\nu_{v}}\sum_{w \in G(v)\cap
			V_{j}}\nu_{w} = b^{*}_{ij}(v).\]
If $\nu$ is constant over every cell of $\Pa$, this implies that $|G(u)\cap V_{j}| = |G(v)\cap V_{j}|$, hence $\Pa$ is equitable.
Conversely, let $\Pa$ be equitable with quotient matrix $B$ and characteristic matrix $S$.
It follows from \cite[Lemma 9.3.1]{GR2001} that $SB=AS$.
Then every eigenvector~$v$ of $B$ gives an eigenvector $S v =( v_1\j^\top, \dots , v_m \j^\top )^\top$ of $A$, since $B v = \lambda v$ implies that
		\[ A ( S v) = S B v = \lambda (S v).\]
In particular, the Perron eigenvector of $B$ gives the Perron eigenvector $\nu$ for $A$.
	\end{proof}
	
Note that, as a consequence of Proposition \ref{lemma:weightequitableentriedPerron}, all weight-equitable partitions of a regular graph are also equitable. For weight-equitable partitions which are not equitable, $\nu$ is not of the form requested in Proposition~\ref{lemma:weightequitableentriedPerron}, as illustrated in Example~\ref{ex:wrnotregular}.
	
	\begin{example} \label{ex:wrnotregular}
	The bipartite graph $G=(V_1\cup V_2, E)$ shown in Figure \ref{fig:nonregularWRpartition} has Perron eigenvector $\nu \approx (2.732, 1, 1, 1.414, 1.932, 1.932)$, which is not constant for either cell $V_i$.
However, it is easily checked that $\{V_1,V_2\}$ is a weight-equitable partition of $G$ (see also Table \ref{table:relations}).
		
		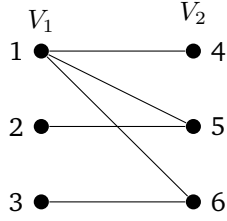
\begin{figure}[t]
			\centering
			\begin{tikzpicture}
				\node[draw=none,fill=none] (V1) at (0,0.4) {$V_1$};
				\node[draw=none,fill=none] (V2) at (2,0.5) {$V_2$};
				\node[shape=circle,fill=black,inner sep=2pt,label=left:1] (A) at (0,0) {};
				\node[shape=circle,fill=black,inner sep=2pt,label=right:4] (B) at (2,0) {};
				\node[shape=circle,fill=black,inner sep=2pt,label=left:2] (C) at (0,-1) {};
				\node[shape=circle,fill=black,inner sep=2pt,label=right:5] (D) at (2,-1) {};
				\node[shape=circle,fill=black,inner sep=2pt,label=left:3] (E) at (0,-2) {};
				\node[shape=circle,fill=black,inner sep=2pt,label=right:6] (F) at (2,-2) {};
				
				\draw[] (B) -- (A) -- (D) -- (C);
				\draw[] (A) -- (F) -- (E);
			\end{tikzpicture}
			\caption{A weight-equitable bipartition such that Perron eigenvector $\nu$ is not constant over each cell.}\label{fig:nonregularWRpartition}
		\end{figure}

	\end{example}
	
	\begin{remark}
		A vertex partition of a graph $G$ is called an \emph{orbit partition} if its classes correspond to the orbits of (a subgroup of) $\text{Aut}(G)$.
A graph is said to be \emph{compact} if every doubly stochastic matrix which commutes with its adjacency matrix $A$ is a convex combination of permutation matrices that commute with $A$.
Godsil \cite{G1997} showed that for compact graphs, all equitable partitions are orbit partitions.
This result does not extend to weight-equitable partitions, as the trivial partition $\{V\}$ is always weight-equitable, see Table \ref{table:relations}.
Therefore, $G$ must be vertex-transitive, and hence regular, if all weight-equitable partitions are also orbit partitions.
In that case, $\nu$ is constant, so it follows from Proposition \ref{lemma:weightequitableentriedPerron} that each weight-equitable partition is actually equitable.
	\end{remark}

	\section{Characterizations of Weight-Equitable Partitions}\label{sec:characterizations}
	
        In contrast to equitable partitions, no algorithmic procedure to find weight-equitable partitions has been discussed in the literature.
        To make progress in this direction, we derive novel characterizations of weight-equitable partitions and methods to generate weight-equitable partitions from known ones.
        We will investigate one of these methods from a practical point of view in Section~\ref{sec:algo}.

        Let~$A \in \R^{m \times n}$.
        A pair of doubly stochastic matrices~$(X, Y) \in \R^{m \times m} \times \R^{n \times n}$ is called a \emph{fractional automorphism} of~$A$ if $XA = AY$.
        A fractional automorphism of type~$(X, X)$ is called a \emph{fractional isomorphism}.
        Note that these definitions generalize the concept of graph automorphisms and isomorphisms from permutation matrices to doubly stochastic matrices.

        In our first characterization, we link weight-equitable partitions of~$G$ to fractional isomorphisms of its adjacency matrix.
        Given a partition $\mathcal{P}$ of $[n]$, we define $X_{\mathcal{P}}\in \R^{n\times n}$ to be the matrix with entries $x_{vv'} \define \frac{\nu_v\nu_{v'}}{\|\rho (P)\|^2}$ if $v, v' \in P$ for some $P \in \mathcal{P}$ and $x_{vv'} \define 0$ otherwise.
	
        \begin{proposition}\label{thn:firstcharacterization}\label{th:weak2}
        If $\mathcal{P}$ is a weight-equitable partition of $V$, then $X_{\mathcal{P}}A = AX_{\mathcal{P}}$.
	   \end{proposition}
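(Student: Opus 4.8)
The plan is to recognize that the matrix $X_{\mathcal{P}}$ is nothing but the orthogonal projection $\bar{S}^{*}(\bar{S}^{*})^{\top}$ associated with the partition, and then to invoke Lemma~\ref{lemma:aidafirstcharacterization}. So the first and essentially only real step is to verify the identity $X_{\mathcal{P}} = \bar{S}^{*}(\bar{S}^{*})^{\top}$ at the level of entries.

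Concretely, I would fix $v \in V_i$ and $v' \in V_k$ and compute
\[
  \bigl(\bar{S}^{*}(\bar{S}^{*})^{\top}\bigr)_{vv'} = \sum_{j=1}^{m} \bar{s}^{*}_{vj}\,\bar{s}^{*}_{v'j}.
\]
Since each row of $\bar{S}^{*}$ is supported on a single column (namely $\bar{s}^{*}_{vj} \neq 0$ only for $j = i$, and $\bar{s}^{*}_{v'j} \neq 0$ only for $j = k$), this sum vanishes whenever $i \neq k$, and for $i = k$ it equals $\tfrac{\nu_v}{\|\rho(V_i)\|}\cdot\tfrac{\nu_{v'}}{\|\rho(V_i)\|} = \tfrac{\nu_v\nu_{v'}}{\|\rho(V_i)\|^{2}}$. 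Comparing this with the definition of $X_{\mathcal{P}}$ (taking $P = V_i$), we get exactly $x_{vv'}$, so $X_{\mathcal{P}} = \bar{S}^{*}(\bar{S}^{*})^{\top}$.

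With this identification in hand, the statement is immediate: because $\mathcal{P}$ is weight-equitable, Lemma~\ref{lemma:aidafirstcharacterization} tells us that $A$ and $\bar{S}^{*}(\bar{S}^{*})^{\top}$ commute, and hence $X_{\mathcal{P}}A = AX_{\mathcal{P}}$. I do not expect any genuine obstacle here; the only point that requires a little care is the block structure of $\bar{S}^{*}(\bar{S}^{*})^{\top}$, i.e.\ that cross-cell entries vanish and that the within-cell normalization matches the factor $\|\rho(V_i)\|^{2}$ in the denominator of $x_{vv'}$. It is also worth noting, as motivation for viewing $X_{\mathcal{P}}$ as a fractional-isomorphism-type witness, that $(\bar{S}^{*})^{\top}\bar{S}^{*} = I$ makes $X_{\mathcal{P}}$ a symmetric idempotent, namely the orthogonal projector onto the column space of $\bar{S}^{*}$.
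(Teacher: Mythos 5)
Your proof is correct and follows exactly the same route as the paper: identify $X_{\mathcal{P}}$ with $\bar{S}^{*}(\bar{S}^{*})^{\top}$ and then apply Lemma~\ref{lemma:aidafirstcharacterization}. The paper simply asserts the identity $X_{\mathcal{P}} = \bar{S}^{*}(\bar{S}^{*})^{\top}$ without the entrywise verification, which you carry out correctly.
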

	
	\begin{proof}
		This follows from Lemma \ref{lemma:aidafirstcharacterization}, using the fact that $X_{\mathcal{P}} = \bar{S}^{*}(\bar{S}^{*})^\top$.		
	\end{proof}
		
Proposition \ref{th:weak2} shows commutativity of a matrix derived from a weight-equitable partition.
For equitable partitions, Godsil \cite[Theorem 1.5]{G1997} considered the converse. To this end, for a double stochastic matrix~$X \in \R^{n \times
          n}$,
        define the directed graph~$G_X$ on~$n$ vertices with adjacency matrix
        $A = (A_{ij})_{i,j \in [n]}$, where
        \[
          A_{ij}=
          \begin{cases}
            1, & \text{if } X_{ij} \neq 0,\\
            0, & \text{otherwise.}
          \end{cases}
        \]
        The \emph{strongly connected components} of $X$ are defined as the strongly connected components of $G_X$.
        Let $\mathcal{P}_X$ denote the partition of $[n]$ into the strongly connected components of $X$.
       Since every equitable partition is also weight-equitable, Godsil's result also trivially applies to weight-equitable partitions:

	\begin{theorem}\label{theo:godsilthm1.5}
		If $X$ is a doubly stochastic matrix which commutes with $A(G)$, then the partition $\mathcal{P}_X$ is weight-equitable.\label{th:godsil1.5}
	\end{theorem}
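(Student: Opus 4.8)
The statement is essentially Godsil's Theorem~1.5, and the cleanest route is to reduce it to Lemma~\ref{lemma:aidafirstcharacterization}: the partition $\mathcal{P}_X$ is weight-equitable if and only if $A$ commutes with $\bar{S}^{*}(\bar{S}^{*})^{\top} = X_{\mathcal{P}_X}$. So the real work is to show that $X_{\mathcal{P}_X}$ commutes with $A$, given that $X$ does. I would proceed in three steps. First, a structural step on $X$ itself: since $X$ is doubly stochastic and commutes with $A = A^{\top}$, it also commutes with $A^{k}$ for all $k$ and with $\frac{1}{N}(I + \tfrac1n J + \cdots)$-type averages; more to the point, I would argue that within each strongly connected component the digraph $G_X$ is in fact strongly connected with a stationary-type structure, and that the blocks of $X$ indexed by the components $\mathcal{P}_X$ can be analyzed independently. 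The key classical fact to invoke (or prove via the Perron-Frobenius theorem applied to each irreducible block of $X$) is that for a doubly stochastic $X$, the strongly connected components of $G_X$ are precisely the supports of the ergodic classes, $X$ is block-diagonal (up to simultaneous permutation) with respect to $\mathcal{P}_X$, and each diagonal block is itself an irreducible doubly stochastic matrix.

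Second, I would identify the Perron eigenvector. Because $X$ commutes with $A$ and $\nu$ is the (simple) Perron eigenvector of $A$ with eigenvalue $\lambda_1$, the vector $X\nu$ is again a nonnegative eigenvector of $A$ for $\lambda_1$, hence $X\nu = \nu$ by simplicity and the normalization. Combined with the block-diagonal structure from Step~1, this says that on each component $P \in \mathcal{P}_X$ the restriction $X|_P$ is a doubly stochastic matrix fixing the positive vector $\rho(P)|_P = (\nu_v)_{v\in P}$; I would then use the fact that an irreducible doubly stochastic matrix has a one-dimensional fixed space and a one-dimensional left-fixed space to pin down enough structure. Actually, the cleanest consequence I need is just: $X\nu = \nu$ and $X^{\top}\nu = \nu$ (the latter by symmetry of the hypotheses, or because $X^{\top}$ is also doubly stochastic and commutes with $A$).

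Third, the main step: show $A X_{\mathcal{P}_X} = X_{\mathcal{P}_X} A$. Writing $\Pi \define X_{\mathcal{P}_X} = \bar{S}^{*}(\bar{S}^{*})^{\top}$, observe that $\Pi$ is the orthogonal projection onto $\mathrm{span}\{\rho(P) : P \in \mathcal{P}_X\}$. I would prove $A$ commutes with $\Pi$ by showing this subspace is $A$-invariant, equivalently (since $A$ is symmetric) that its orthogonal complement is $A$-invariant. Here is where the commutation $XA = AX$ enters decisively: for $w$ orthogonal to every $\rho(P)$, I want $Aw$ also orthogonal to every $\rho(P)$, i.e.\ $\rho(P)^{\top} A w = 0$. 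Using $X^{\top}\nu = \nu$ and the block structure, $\rho(P)$ is a suitable combination related to $X$ (each $\rho(P)$ lies in the fixed space of $X^{\top}$ restricted appropriately), so $\rho(P)^{\top} A = \rho(P)^{\top} X A = \rho(P)^{\top} A X$ — wait, that needs $\rho(P)$ fixed by $X^{\top}$ on the nose, which follows because $X^{\top}\nu = \nu$ and $X^{\top}$ preserves the block $P$. Thus $\rho(P)^{\top} A w = \rho(P)^{\top} A X w$, and it remains to know $Xw$ is still orthogonal to the $\rho(Q)$'s; since $\Pi = X_{\mathcal{P}_X}$ and one checks $\Pi X = X \Pi = \Pi$ directly from the entrywise formulas (each block of $X$ and the rank-one block of $\Pi$ interact through $X|_P \rho(P)|_P = \rho(P)|_P$), the complement is $X$-invariant, closing the loop.

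\textbf{Main obstacle.} The delicate point is Step~1 combined with the block interaction in Step~3: one must be careful that $\mathcal{P}_X$, defined via \emph{strongly} connected components of the (possibly non-symmetric) digraph $G_X$, genuinely makes $X$ block-diagonal and each block irreducible doubly stochastic — this uses that a doubly stochastic matrix cannot have an edge from one recurrent class into another, so "strongly connected" and "connected" components coincide here. Once that is in hand, all the identities $X\nu=\nu$, $X^{\top}\nu=\nu$, $\Pi X = X\Pi = \Pi$ are short computations, and the invariance argument for $A$ goes through. The alternative, slicker finish is simply: $\Pi$ is a doubly stochastic matrix (convex structure of the blocks), $\Pi$ commutes with $A$ by the argument above, and $\Pi = \bar{S}^{*}(\bar{S}^{*})^{\top}$ for the normalized weight-characteristic matrix of $\mathcal{P}_X$, so Lemma~\ref{lemma:aidafirstcharacterization} immediately yields that $\mathcal{P}_X$ is weight-equitable.
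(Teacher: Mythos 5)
Your plan diverges substantially from the paper, which disposes of this statement in one line: Godsil's Theorem~1.5 already shows that $\mathcal{P}_X$ is an \emph{equitable} partition, and since every equitable partition is weight-equitable, nothing further needs to be proved. Re-deriving the result from scratch via Lemma~\ref{lemma:aidafirstcharacterization} is legitimate in principle, and your Steps~1 and~2 are sound (modulo a small slip: the constant in $X\nu = c\nu$ is pinned to $c=1$ by $\1^\top X = \1^\top$, not by the normalization of $\nu$).

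The genuine gap is in Step~3. From $\rho(P)^\top X = \rho(P)^\top$ and $XA = AX$ you obtain $\rho(P)^\top A w = \rho(P)^\top A Xw$, and from $\Pi X = \Pi$ you get that $Xw$ again lies in $\ker\Pi$; but this only shows that the functional $w \mapsto \rho(P)^\top A w$ is constant along $X$-orbits inside $\ker\Pi$ --- it never produces the value $0$, so the loop does not close. Iterating and taking Ces\`aro averages replaces $w$ by $\hat{X}w$, where $\hat{X} = \lim_N \frac{1}{N}\sum_{k<N}X^k$ is block-diagonal with blocks $\frac{1}{|P|}J$; but $\hat{X}w = 0$ would require $\1^\top w = 0$ on each cell, whereas membership in $\ker\Pi$ only gives $\rho(P)^\top w = 0$. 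The missing ingredient is one you mention and then explicitly discard: each diagonal block $X|_P$ is an irreducible stochastic matrix, so its fixed space is one-dimensional and spanned by the all-ones vector; since $X|_P$ fixes $\nu|_P$, the Perron vector is constant on every cell of $\mathcal{P}_X$. With that, $\rho(P)$ is proportional to the indicator of $P$, the projection $\Pi$ coincides with $\hat{X}$, and commutation with $A$ is immediate because each $X^k$ commutes with $A$. Note that this observation also shows (consistently with Proposition~\ref{lemma:weightequitableentriedPerron}) that the partition you obtain is in fact equitable --- which is exactly why the paper can simply cite Godsil and add ``equitable implies weight-equitable.''
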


	For equitable partitions, Theorem \ref{theo:godsilthm1.5} implies that $I$ is the only doubly stochastic matrix which commutes with $A(G)$ if and only if $G$ has no nontrivial equitable partitions \cite[Corollary 1.6]{G1997}.
In the case of weight-equitability, we cannot generalize Theorem \ref{theo:godsilthm1.5} without extra assumptions, as the trivial partition is not necessarily weight-equitable, see Table~\ref{table:relations}.
As a partial extension of Theorem~\ref{theo:godsilthm1.5}, Proposition~\ref{thn:firstcharacterization} implies the following result.
	
	\begin{corollary}\label{cor:doublypermute}
		If $I$ is the only doubly stochastic matrix which commutes with $A(G)$, then the trivial partition of $n$ cells is the only potential (since the graph may not be regular) weight-equitable partition of $G$.
	\end{corollary}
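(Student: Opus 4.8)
The plan is to derive the statement from Proposition~\ref{thn:firstcharacterization}. Let $\mathcal{P} = \{V_1, \dots, V_m\}$ be a weight-equitable partition of $G$, and recall from the proof of Proposition~\ref{thn:firstcharacterization} that $X_{\mathcal{P}} = \bar{S}^{*}(\bar{S}^{*})^{\top}$, so in particular $X_{\mathcal{P}}A = AX_{\mathcal{P}}$. If some cell $V_j$ satisfies $|V_j|\ge 2$, then for two distinct vertices $u,u'\in V_j$ we have $(X_{\mathcal{P}})_{uu'} = \nu_u\nu_{u'}/\|\rho(V_j)\|^2 > 0$, hence $X_{\mathcal{P}}\neq I$; conversely, if every cell is a singleton, i.e.\ $\mathcal{P}$ is the partition into $n$ cells, then $X_{\mathcal{P}} = I$. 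Thus it would suffice to show that $X_{\mathcal{P}}$ is doubly stochastic: the hypothesis then forces $X_{\mathcal{P}} = I$, so every cell is a singleton.

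The crux is therefore to decide when $X_{\mathcal{P}}$ is doubly stochastic. It is symmetric and entrywise nonnegative, so its row and column sums coincide and it is enough to inspect one row: for $u\in V_j$, the $u$-th row of $X_{\mathcal{P}}$ sums to $\nu_u(\sum_{w\in V_j}\nu_w)/(\sum_{w\in V_j}\nu_w^2)$, which equals $1$ for every $u\in V_j$ exactly when $\nu$ is constant on $V_j$. By Proposition~\ref{lemma:weightequitableentriedPerron}, for a weight-equitable partition this holds for all cells simultaneously if and only if $\mathcal{P}$ is equitable, and in particular it holds automatically when $G$ is regular, since then $\nu = \j$. Hence for regular $G$ (and, more generally, whenever the candidate partition is equitable) the first paragraph applies, and under the hypothesis the partition into $n$ singletons is the only weight-equitable partition; this recovers \cite[Corollary~1.6]{G1997} as the regular specialization.

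The word ``potential'' records the two points where the argument stops. First, if $G$ is not regular the partition into $n$ cells is itself not weight-equitable (Table~\ref{table:relations}), so the statement isolates the unique candidate rather than an actual weight-equitable partition. Second --- and this is the main obstacle to a cleaner statement --- when $\nu$ is not constant on the cells of $\mathcal{P}$ the matrix $X_{\mathcal{P}}$ is not doubly stochastic, so the hypothesis cannot be invoked against it through Proposition~\ref{thn:firstcharacterization}; turning an arbitrary non-trivial weight-equitable partition of a non-regular graph into a \emph{doubly stochastic} matrix other than $I$ commuting with $A$ is precisely the step that does not go through in general.
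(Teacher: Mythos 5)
Your route is exactly the one the paper intends: Corollary~\ref{cor:doublypermute} is stated as a direct consequence of Proposition~\ref{thn:firstcharacterization}, with no further argument supplied, and your first paragraph is precisely that deduction ($X_{\mathcal{P}}=\bar{S}^{*}(\bar{S}^{*})^{\top}$ commutes with $A$, has a nonzero off-diagonal entry as soon as some cell has size at least two, and equals $I$ exactly for the $n$-cell partition). Your computation that $X_{\mathcal{P}}$ has unit row sums if and only if $\nu$ is constant on each cell is also correct, and the obstruction you flag is real: for a weight-equitable partition that is not equitable, $X_{\mathcal{P}}$ is symmetric, nonnegative and commutes with $A$ but is \emph{not} doubly stochastic, so the hypothesis cannot be invoked against it. The paper does not bridge this step either --- in the discussion after Theorem~\ref{thm:join} it concedes that for a non-equitable weight-equitable partition one can only guarantee $AX=XA$, $X\1=\1X$, $X\geq 0$ --- and the hedging in ``potential'' and ``partial extension of Theorem~\ref{theo:godsilthm1.5}'' is carrying that weight. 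So you have not missed anything the paper supplies; you have made explicit the restriction (by Proposition~\ref{lemma:weightequitableentriedPerron}, equivalently equitability) under which the deduction actually closes. The cleanest witness for your caveat is the single-cell partition $\{V\}$, always weight-equitable by Table~\ref{table:relations}, whose $X_{\{V\}}=\nu\nu^{\top}/\|\nu\|^{2}$ is doubly stochastic only when $G$ is regular.
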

	Equivalently, if $I$ is the only doubly stochastic matrix which commutes with $A(G)$, then $G$ has a weight-equitable partition if and only if it is a regular graph.

        In order to state our next result we need some preliminary definitions. Let $A\in \R^{V^0\times W^0}$ and~$B\in\R^{V^1\times W^1}$ be two real-valued matrices. Two matrices $A$ and $B$ are said to be \emph{fractionally isomorphic} if the following three properties hold:
        \begin{itemize}
        \item there are doubly stochastic matrices~$X$, $Y$ such that $X(A\oplus B) = (A \oplus B)Y$;

        \item for every~$i \in \{0,1\}$ and all~$v \in V^i$, there exists~$v' \in V^{1-i}$ with~$x_{vv'} \neq 0$;
        \item for every~$i \in \{0,1\}$ and all~$w \in W^i$, there exists~$w' \in V^{1-i}$ with~$x_{ww'} \neq 0$.
        \end{itemize}
        A \emph{joint partition} of graphs $G$ and $H$ is a partition of $V(G)\cup V(H)$.
        A joint partition is \emph{balanced} if every part has nonempty intersection with both $V(G)$ and $V(H)$.
	
	The following result extends \cite[Theorem 5.2]{GKMS2014} for weight-equitable partitions.
	
\begin{theorem}\label{thm:join} For all graphs $G$ and $H$ with adjacency matrices $A$ and $B$, respectively, the following two statements are equivalent:
		\begin{enumerate}[(i)]
			\item\label{it:join1} $G$ and $H$ have a balanced weight-equitable joint partition;
			\item\label{it:join2} The coarsest weight-equitable joint partition of $G$ and $H$ is balanced.
                \end{enumerate}
                Moreover, if $\rho\colon u \rightarrow \nu_u$ is constant over each cell of the partition,  then (\ref{it:join1}) and (\ref{it:join2}) imply that $A$ and~$B$ are fractionally isomorphic.
	\end{theorem}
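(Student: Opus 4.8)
The plan is to follow the proof of \cite[Theorem 5.2]{GKMS2014} in the equitable setting, using the algebraic characterization of weight-equitability from Lemma~\ref{lemma:aidafirstcharacterization} in place of the colour-refinement arguments that are available for equitable partitions. Set $C = A \oplus B$, the adjacency matrix of the disjoint union $G \sqcup H$, and for a joint partition $\mathcal{P}$ with normalized weight-characteristic matrix $\bar{S}^{*}$ write $X_{\mathcal{P}} = \bar{S}^{*}(\bar{S}^{*})^{\top}$ and let $U_{\mathcal{P}}$ be the range of $X_{\mathcal{P}}$, i.e.\ the subspace spanned by the vectors $\rho(P)$, $P \in \mathcal{P}$. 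The (mild) extension of Lemma~\ref{lemma:aidafirstcharacterization} to disjoint unions asserts that $\mathcal{P}$ is weight-equitable exactly when $C$ commutes with $X_{\mathcal{P}}$, equivalently --- since $C$ is symmetric --- exactly when $U_{\mathcal{P}}$ is $C$-invariant.

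The implication (ii) $\Rightarrow$ (i) is immediate, since the coarsest weight-equitable joint partition is in particular a weight-equitable joint partition. For (i) $\Rightarrow$ (ii), I would first establish that the family $\mathcal{F}$ of weight-equitable joint partitions has a coarsest element that is refined by all of its members. The key point is that $U_{\mathcal{P}}$ consists precisely of those vectors $v$ for which $u \mapsto v_u/\nu_u$ is constant on every cell of $\mathcal{P}$; hence $U_{\mathcal{P}_1} \cap U_{\mathcal{P}_2} = U_{\mathcal{P}_1 \vee \mathcal{P}_2}$, where $\mathcal{P}_1 \vee \mathcal{P}_2$ denotes the join in the partition lattice. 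If $\mathcal{P}_1, \mathcal{P}_2 \in \mathcal{F}$, then $U_{\mathcal{P}_1}$ and $U_{\mathcal{P}_2}$, and thus also their intersection, are $C$-invariant, so $\mathcal{P}_1 \vee \mathcal{P}_2 \in \mathcal{F}$. Since $\mathcal{F}$ is finite and nonempty (the discrete partition is weight-equitable), the join $\mathcal{Q}^{*}$ of all its members lies in $\mathcal{F}$ and is refined by every weight-equitable joint partition; it is therefore the coarsest one. Now if (i) holds, pick a balanced $\mathcal{P} \in \mathcal{F}$: every cell of $\mathcal{Q}^{*}$ is a union of cells of $\mathcal{P}$, each of which meets both $V(G)$ and $V(H)$, so every cell of $\mathcal{Q}^{*}$ does as well, i.e.\ $\mathcal{Q}^{*}$ is balanced, which is (ii). (A shortcut: summing $b^{*}_{ij}(u) = b^{*}_{ij}(v)$ over $j$ for $u,v$ in a common balanced cell equates the weight-degrees across the two sides, forcing $\lambda_1(A) = \lambda_1(B)$, after which the one-cell joint partition is weight-equitable, balanced, and trivially the coarsest.)

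For the final assertion, let $\mathcal{P}$ be a balanced weight-equitable joint partition provided by (i) or (ii) --- for instance $\mathcal{Q}^{*}$ --- and suppose $\rho$ is constant on each cell of $\mathcal{P}$. Carrying out the computation from the proof of Proposition~\ref{lemma:weightequitableentriedPerron} for $G \sqcup H$: for vertices $u, v$ in a common cell $W_i$, constancy of $\nu$ on the cells reduces $b^{*}_{ij}(u) = b^{*}_{ij}(v)$ to $|(G \sqcup H)(u) \cap W_j| = |(G \sqcup H)(v) \cap W_j|$ for all $j$, so $\mathcal{P}$ is an \emph{equitable} joint partition of $G$ and $H$, and it is balanced. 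Invoking \cite[Theorem 5.2]{GKMS2014} for this balanced equitable joint partition then yields that $A$ and $B$ are fractionally isomorphic.

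The main obstacle is the first step of (i) $\Rightarrow$ (ii): in the equitable case the coarsest partition is obtained canonically via colour refinement, but no analogous procedure is known for weight-equitable partitions, so the coarsest one has to be extracted algebraically. The crucial ingredient is the closure of weight-equitability under joins, which rests on Lemma~\ref{lemma:aidafirstcharacterization} together with the self-adjointness of $C$ --- which makes $C$-invariance of a subspace equivalent to commutation of its orthogonal projector with $C$, and such invariance is preserved by intersection.
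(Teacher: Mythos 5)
Your proof is correct, and the two halves compare differently with the paper. For the equivalence of (i) and (ii) you follow essentially the paper's route: (ii)~$\Rightarrow$~(i) is trivial, and (i)~$\Rightarrow$~(ii) reduces to the fact that every weight-equitable joint partition refines the coarsest one. The paper simply invokes this (it rests on Corollary~\ref{lemma:changodsil2} and the remark following it), whereas you re-derive closure under joins via the identity $U_{\mathcal{P}_1}\cap U_{\mathcal{P}_2}=U_{\mathcal{P}_1\vee\mathcal{P}_2}$ and the equivalence, for the symmetric matrix $A\oplus B$, between commutation with the orthogonal projector $\bar{S}^{*}(\bar{S}^{*})^{\top}$ and invariance of its range; this is a self-contained linear-algebra substitute for the paper's operator-theoretic Lemma~\ref{lemma:changodsil1}/Corollary~\ref{lemma:changodsil2} argument and is perfectly sound. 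For the ``moreover'' part your route genuinely differs: you observe that constancy of $\nu$ on the cells collapses weight-equitability to equitability (the computation of Proposition~\ref{lemma:weightequitableentriedPerron}) and then cite \cite[Theorem 5.2]{GKMS2014} for the equitable case, whereas the paper constructs the fractional isomorphism explicitly, taking $X=Y=\bar{S}^{*}(\bar{S}^{*})^{\top}$, checking double stochasticity from the constancy assumption and the nonvanishing conditions from balancedness. Your reduction is shorter and makes transparent why the constancy hypothesis cannot be dropped (the paper itself remarks after the theorem that this case is equivalent to the equitable characterization of \cite{GKMS2014}); the paper's construction buys self-containedness and an explicit witness $(X,Y)$. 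One small quibble: you justify nonemptiness of the family $\mathcal{F}$ by asserting that the discrete partition is weight-equitable, which conflicts with the paper's Table~\ref{table:relations} (where the $n$-cell partition is tied to regularity); this is harmless here, since under hypothesis (i) the family already contains a balanced member, and the one-cell partition is always weight-equitable, but the justification should be adjusted.
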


	\begin{proof}
		Statement~\eqref{it:join1} follows immediately from~\eqref{it:join2}.
Conversely, if there exists a balanced weight-equitable joint partition $\mathcal{P}$, it is a refinement of the coarsest one, hence any part $P$ of the coarsest weight-equitable joint partition is the union of some $P_1, \dots, P_s \in \mathcal{P}$.
Since each part $P_i$ has nonempty intersection with $V(G)$ and $V(H)$, so does $P$.
		
		To show that the above imply that $A$ and $B$ are fractionally isomorphic, let $\mathcal{P}=\{P_1, \dots, P_s\}$ be a balanced weight-equitable joint partition of $G$ and $H$.
By Lemma \ref{lem:commuteiffwr}, the matrix $\bar{S}^{*}(\bar{S}^{*})^\top$ satisfies $\bar{S}^{*}(\bar{S}^{*})^\top(A\oplus B) = (A\oplus B)\bar{S}^{*}(\bar{S}^{*})^\top$.
Set $X=Y=\bar{S}^{*}(\bar{S}^{*})^\top$ and let $P_i\in \mathcal{P}$, $v\in P_i$. Since we are assuming  that  $\rho\colon u \rightarrow \nu_u$ is constant over each cell of the partition, it holds that $X\1=\1X=\1$.
Assume without loss of generality that $v\in V(G)$.
Since $\mathcal{P}$ is balanced, there exists a vertex $v'\in P_i$ such that $v'\in V(H)$.
This means that $(\bar{S}^{*}(\bar{S}^{*})^\top)_{vv'}\neq 0$.
Therefore $(X,Y)$ is a fractional isomorphism from $A$ to $B$.
	\end{proof}
	
	Observe that in order to link weight-equitable partitions to fractional isomorphism as we do in the last part of Theorem \ref{thm:join}, one cannot avoid the assumption that $\rho\colon u \rightarrow \nu_u$ is constant over each cell of the partition, and actually this case is just equivalent to the equitable partition characterization that appeared in \cite[Theorem 5.2]{GKMS2014}. Note that this is closely related with the result of Proposition \ref{lemma:weightequitableentriedPerron}. This is due to the fact that if a weight-equitable partition is equitable, then we know that $XA=AX, X\1=\1X=\1, X\geq 0$, see \cite[Corollary 1.2]{G1997}. If a weight-equitable partition is not equitable, then one can only guarantee $AX=XA, X\1=\1X, X\geq 0$ \cite{A2019}. To the best of our knowledge, it remains an open problem to investigate the convex polytope that consists of all matrices $X$ such that $XA=AX,X\1=\1X,X\geq 0$.

Let $\Pa$ be a joint partition of graphs $G$ and $H$. The \emph{restriction} of $\Pa$ to $G$ is defined as~$\Pa_G\define \{P\cap V(G) \mid P\in \Pa\}$. We denote the intersection of a cell~$P\in \Pa$ with~$V(G)$ and~$V(H)$ by~$P_G$ and~$P_H$ respectively.

 \begin{proposition}\label{extension5.4Groheetal}
    Let $G$ and $H$ be graphs with coarsest weight-equitable joint partition $\Pa$. Then the restrictions of $\Pa$ to $G$ and $H$ are the coarsest weight-equitable partitions of $G$ and $H$.\label{prop:restrictionCoarsest}
\end{proposition}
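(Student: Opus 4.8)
The plan is to identify joint partitions of $G$ and $H$ with ordinary partitions of the disjoint union $G\sqcup H$ (adjacency matrix $A\oplus B$), and to exploit that $G\sqcup H$ has no edges between $V(G)$ and $V(H)$. The one bookkeeping fact I would settle first is the following: if $\hat\nu$ is the positive eigenvector of $A\oplus B$ used to define weight-equitability on $G\sqcup H$, then $\hat\nu|_{V(G)}$ is a positive eigenvector of $A$, so by connectedness of $G$ and Perron-Frobenius it is a positive scalar multiple of the Perron eigenvector $\nu$ of $A$ (hence also $\lambda_1(A)=\lambda_1(B)$, which is anyway implicit in the hypothesis, since otherwise $A\oplus B$ has no positive eigenvector). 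Because this scalar cancels in every weight-intersection number, for $u\in V(G)$ the weight-intersection numbers of a joint partition $\mathcal{Q}$ at $u$ coincide, cell by cell, with the weight-intersection numbers of the restriction $\mathcal{Q}_G$ at $u$ taken inside $G$ (a cell of $\mathcal{Q}$ disjoint from $V(G)$ simply contributing $0$); dually for $H$.

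Next I would prove two small claims. (a) If $\mathcal{Q}$ is a weight-equitable joint partition of $G$ and $H$, then $\mathcal{Q}_G$ is weight-equitable in $G$: two vertices in a common (nonempty) cell of $\mathcal{Q}_G$ lie in a common cell of $\mathcal{Q}$ and both in $V(G)$, and for such vertices the equal joint weight-intersection numbers are, by the previous paragraph, precisely the $G$-weight-intersection numbers of $\mathcal{Q}_G$. Applying this to $\mathcal{Q}=\mathcal{P}$ shows that $\mathcal{P}_G$ and $\mathcal{P}_H$ are weight-equitable. (b) Conversely, if $\mathcal{R}$ is any weight-equitable partition of $G$, then the joint partition $\mathcal{R}\cup\{V(H)\}$ (the cells of $\mathcal{R}$ together with the single cell $V(H)$) is weight-equitable: inside a cell of $\mathcal{R}$ the $G$-weight-intersection numbers are constant by hypothesis and those into the cell $V(H)$ vanish, while inside the cell $V(H)$ those into cells of $\mathcal{R}$ vanish and the one into $V(H)$ itself is the constant weight-degree of $H$. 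This uses only that the one-cell partition of a connected graph is always weight-equitable.

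The proposition then follows by a sandwich in the refinement order. Let $\mathcal{R}$ be an arbitrary weight-equitable partition of $G$. By (b), $\mathcal{R}\cup\{V(H)\}$ is weight-equitable, hence, since the coarsest weight-equitable joint partition is refined by every weight-equitable joint partition, it refines $\mathcal{P}$; restricting to $V(G)$ (which preserves refinement) and noting $(\mathcal{R}\cup\{V(H)\})_G=\mathcal{R}$, we get that $\mathcal{R}$ refines $\mathcal{P}_G$. Thus every weight-equitable partition of $G$ refines $\mathcal{P}_G$, while by (a) $\mathcal{P}_G$ is itself weight-equitable; hence $\mathcal{P}_G$ is the coarsest weight-equitable partition of $G$. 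The argument for $H$ is symmetric.

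The step I expect to be the main obstacle is this first, foundational one: making rigorous that a ``weight-equitable joint partition'' really is a weight-equitable partition of the disconnected graph $G\sqcup H$ and that weight-intersection numbers restrict cleanly to the two sides. This hinges on how the positive eigenvector defining weight-equitability on $G\sqcup H$ relates to the Perron eigenvectors of the components; it forces $\lambda_1(A)=\lambda_1(B)$ and needs some care, since the normalization ``smallest entry~$1$'' does not single out a unique positive eigenvector of a disjoint union. However, only the proportionality $\hat\nu|_{V(G)}\propto\nu$ is ever used, and that constant cancels in every weight-intersection number, so this causes no genuine trouble; once it is in place, the remainder is the purely lattice-theoretic argument above, parallel to the treatment of equitable partitions in \cite{GKMS2014}.
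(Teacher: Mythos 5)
Your first half (the restriction of a weight-equitable joint partition is weight-equitable) is essentially the paper's own argument: a vertex of $G$ has no neighbours in $V(H)$, so the joint weight-intersection numbers at $u\in V(G)$ coincide cell by cell with those of the restriction. For the coarseness claim you take a genuinely different route. The paper argues by contradiction: given a hypothetically coarser weight-equitable partition $\Qa$ of $G$, it explicitly builds a coarser weight-equitable joint partition $\Qa'$ by merging the cells $P$ of $\Pa$ according to which cell of $\Qa$ contains $P_G$, and verifying weight-equitability of $\Qa'$ requires summing weight-intersection numbers over the sub-cells of $\Pa$ lying inside each cell of $\Qa'$. You instead embed an arbitrary weight-equitable partition $\mathcal{R}$ of $G$ as the joint partition $\mathcal{R}\cup\{V(H)\}$, observe (almost trivially, since there are no cross edges and the one-cell partition of $H$ is weight-equitable) that this is a weight-equitable joint partition, and conclude because the coarsest joint partition is refined by every weight-equitable joint partition. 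Your route is shorter and avoids the sub-cell computation, but it leans on the stronger structural fact that the coarsest weight-equitable joint partition dominates all others in the refinement order --- equivalently on closure under joins, Corollary~\ref{lemma:changodsil2}, applied to the disconnected graph $G\sqcup H$ --- whereas the paper's contradiction argument needs only maximality of $\Pa$. Since the paper itself invokes exactly this dominance in the proof of Theorem~\ref{thm:join}, your reliance on it is consistent with the paper's framework. You also treat more carefully than the paper does the question of which positive eigenvector of $A\oplus B$ defines weight-equitability of a joint partition (forcing $\lambda_1(A)=\lambda_1(B)$ and noting the scalar ambiguity cancels); the paper glosses over this point entirely.
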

\begin{proof}
  Without loss of generality, consider the restriction $\Pa_G$.
Let $P,Q\in \Pa$ and fix an arbitrary vertex~$u\in P_G$, then
\[b^*_{P,Q}(u) = \sum_{\substack{v\sim u \\ v\in Q}}\frac{\nu_v}{\nu_u} = \sum_{\substack{v\sim u \\ v\in Q_G}}\frac{\nu_v}{\nu_u} = b^*_{P_G, Q_G}(u).\]
Since $\Pa$ is a weight-equitable partition, $b^*_{P,Q}(u)~$ does not depend on $u$, hence $\Pa_G$ is weight-equitable.

For the sake of contradiction, suppose that~$G$ has a coarser weight-equitable partition~$\Qa$.
Let~$\Qa'$ be the partition of~$V(G)\cup V(H)$ which is given by
\[\Qa'=\left\{\bigcup_{\substack{P\in \Pa\\ \ P_G\subseteq Q}}P \mid Q\in \Qa\right\}.\]
and consider two arbitrary cells $P,Q\in \Qa'$.
For a vertex~$u\in P_G$, the weight-intersection number is given by~$b^*_{P,Q}(u) = b^*_{P_G,Q_G}(u)$,
which is independent of~$u$, since~$\Qa$ is weight-equitable.
If $u\in P_H$, let $S\subseteq P$ denote the cell of $\Pa$ containing $u$.
Then
\[b^*_{P,Q}(u) = b^*_{P_H,Q_H}(u) = \sum_{\substack{T\subseteq Q\\T\in \Pa}} b^*_{S_H,T_H}(u).\]
Note that for any~$x\in S_G$, we have~$b^*_{S_H,T_H}(u) = b^*_{S_G,T_G}(x)$, because~$\Pa$ is weight-equitable.
It follows that \[b^*_{P,Q}(u) =  \sum_{\substack{T\subseteq Q\\T\in \Pa}} b^*_{S_G,T_G}(x) = b^*_{S_G,Q_G}(x).\]
By weight-equitability of $\Qa$, this number again equals $b^*_{P_G,Q_G}(x)$ and is independent of our choice of $x$ and $u$.
Therefore, $\Qa'$ is a coarser weight-equitable partition, a contradiction.
\end{proof}

\begin{proposition}\label{extension5.6Groheetal}
Let $G$ and $H$ be graphs with adjacency matrices $A$ and $B$ and balanced weight-equitable
joint partition $\Pa$.
If $G$ is connected, then for all $P\in \Pa$,
\[\frac{\|\rho (P_G)\|^2}{\|\rho (P_H)\|^2} = \frac{\|\rho (V(G))\|^2}{\|\rho (V(H))\|^2}.\]
\end{proposition}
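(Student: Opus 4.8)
The plan is to extract the claim from the off-diagonal block of the commuting matrix furnished by Lemma~\ref{lem:commuteiffwr}, applied to the disjoint union of~$G$ and~$H$ (adjacency matrix $A\oplus B$). Let~$\nu$ be the positive eigenvector of~$A\oplus B$ belonging to~$\lambda_1$ underlying the joint weight-equitable partition, and set $\nu_G\define\nu|_{V(G)}$, $\nu_H\define\nu|_{V(H)}$; restricting $(A\oplus B)\nu=\lambda_1\nu$ to the two summands yields $A\nu_G=\lambda_1\nu_G$ and $B\nu_H=\lambda_1\nu_H$. By Lemma~\ref{lem:commuteiffwr} (equivalently Proposition~\ref{thn:firstcharacterization}), the matrix $X=\bar{S}^{*}(\bar{S}^{*})^{\top}=X_{\mathcal{P}}$ commutes with $A\oplus B$, a fact already used in the proof of Theorem~\ref{thm:join}. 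Splitting $X$ into blocks indexed by $V(G)$ and $V(H)$ and comparing the $(V(G),V(H))$-block of $X(A\oplus B)$ with that of $(A\oplus B)X$ gives the intertwining relation $A\,X_{GH}=X_{GH}\,B$, where $X_{GH}$ is the submatrix of $X$ with $(X_{GH})_{vw}=\nu_v\nu_w/\|\rho(P)\|^2$ when $v\in V(G)$ and $w\in V(H)$ lie in a common cell $P\in\mathcal{P}$, and $0$ otherwise.

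Next I would compute $f\define X_{GH}\nu_H$. Since $\|\rho(U)\|^2=\sum_{u\in U}\nu_u^2$, a one-line calculation gives $f_v=\nu_v\,\|\rho(P_H)\|^2/\|\rho(P)\|^2$ for the cell $P$ containing~$v$; as $\mathcal{P}$ is balanced, $\|\rho(P_H)\|>0$ and $f$ is strictly positive. From $B\nu_H=\lambda_1\nu_H$ together with $A X_{GH}=X_{GH}B$ we obtain $A f=X_{GH}(B\nu_H)=\lambda_1 f$, so $f$ is an eigenvector of $A$ for~$\lambda_1$.

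Here the connectivity of $G$ is the decisive ingredient: since $A$ is irreducible, $\lambda_1$ is simple with eigenspace spanned by the positive vector $\nu_G$, so $f=\gamma\,\nu_G$ for some scalar $\gamma$. Comparing coordinates and dividing by $\nu_v>0$ gives $\|\rho(P_H)\|^2/\|\rho(P)\|^2=\gamma$ for every cell meeting $V(G)$, hence, by balancedness, for every $P\in\mathcal{P}$. Because $\|\rho(P)\|^2=\|\rho(P_G)\|^2+\|\rho(P_H)\|^2$, this also forces $\|\rho(P_G)\|^2=(1-\gamma)\|\rho(P)\|^2$, so $\|\rho(P_G)\|^2/\|\rho(P_H)\|^2=(1-\gamma)/\gamma$ is independent of $P$. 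Finally I would sum $\|\rho(P_G)\|^2=\tfrac{1-\gamma}{\gamma}\|\rho(P_H)\|^2$ over all cells: since $\{P_G\}_{P\in\mathcal{P}}$ and $\{P_H\}_{P\in\mathcal{P}}$ partition $V(G)$ and $V(H)$ and $\|\rho(\cdot)\|^2$ is additive over disjoint unions, the two sides become $\|\rho(V(G))\|^2$ and $\tfrac{1-\gamma}{\gamma}\|\rho(V(H))\|^2$, which identifies the constant $(1-\gamma)/\gamma$ with $\|\rho(V(G))\|^2/\|\rho(V(H))\|^2$ and gives the claimed equality.

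The only real delicacy I anticipate is bookkeeping rather than ideas: one should check that the positive $\lambda_1$-eigenvector $\nu$ of the reducible matrix $A\oplus B$ is genuinely available in this setting (this is implicit in the existence of a weight-equitable joint partition and forces $\lambda_1(G)=\lambda_1(H)$), that Lemma~\ref{lem:commuteiffwr} applies verbatim to $A\oplus B$, and that the normalization conventions indeed give $X_{\mathcal{P}}=\bar{S}^{*}(\bar{S}^{*})^{\top}$. Everything else reduces to short matrix manipulations, with connectivity of~$G$ invoked exactly once, to guarantee simplicity of $\lambda_1$ for $A$.
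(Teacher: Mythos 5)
Your proof is correct, but it follows a genuinely different route from the paper's. The paper argues combinatorially at the level of cells: it invokes Proposition~\ref{prop:restrictionCoarsest} to see that the restriction $\mathcal{P}_G$ is weight-equitable with the same intersection numbers, uses the symmetry $\tilde{b}^*_{P,Q}=\tilde{b}^*_{Q,P}$ of the weight-quotient matrix to derive $b^*_{P,Q}\|\rho(P_G)\|^2=b^*_{Q,P}\|\rho(Q_G)\|^2$ (and likewise for $H$), concludes that the ratio $\|\rho(P_G)\|^2/\|\rho(P_H)\|^2$ is the same for any two cells joined by a nonzero intersection number, and then propagates this relation to all pairs of cells via connectivity of the quotient graph on $\mathcal{P}$ before summing. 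You instead work with the commuting matrix $X_{\mathcal{P}}=\bar{S}^*(\bar{S}^*)^\top$ of Lemma~\ref{lem:commuteiffwr}, extract the intertwining relation $AX_{GH}=X_{GH}B$ from the off-diagonal block, and use simplicity of the Perron root of the irreducible matrix $A$ to force $X_{GH}\nu_H$ to be a multiple of $\nu_G$; connectivity of $G$ enters once, through Perron--Frobenius, rather than through connectivity of the cell graph. Your computation of $f_v=\nu_v\|\rho(P_H)\|^2/\|\rho(P)\|^2$, the positivity from balancedness, and the final summation are all correct, and you rightly flag the one genuine hypothesis hidden in the setup (a positive Perron eigenvector of $A\oplus B$ exists only because the joint partition is assumed weight-equitable, which forces the two spectral radii to coincide). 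The paper's argument is more elementary and self-contained given Proposition~\ref{prop:restrictionCoarsest}; yours is shorter, identifies the constant ratio in one step without a transitivity argument, and makes transparent exactly where irreducibility of $A$ is used.
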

\begin{proof}
Fix $P\in \Pa$ arbitrarily. Since $G$ is connected, there exists a cell $Q\in \Pa$ such that $b^*_{P,Q}\neq 0$.
We have shown in Proposition \ref{prop:restrictionCoarsest} that the restriction $\Pa_G$ is again a weight-equitable partition with the same weight-intersection numbers.
As a result,
\[b^*_{P,Q}\|\rho (P_G)\|^2 = b^*_{P_G,Q_G} \|\rho (P_G)\|^2 = \tilde{b}^*_{P,Q} = \tilde{b}^*_{Q,P} = b^*_{Q_G,P_G} \|\rho (Q_G)\|^2 = b^*_{Q,P}\|\rho (Q_G)\|^2.\]
Similarly, we can derive $b^*_{P,Q}\|\rho (P_H)\|^2 = b^*_{Q,P}\|\rho (Q_H)\|^2$.
Combining both statements gives
\[\frac{b^*_{Q,P}}{b^*_{P,Q}} = \frac{\|\rho (P_G)\|^2}{\|\rho (Q_G)\|^2}= \frac{\|\rho (P_H)\|^2}{\|\rho (Q_H)\|^2} ,\]
which rewrites to
\begin{equation}\frac{\|\rho (P_G)\|^2}{\|\rho (P_H)\|^2}= \frac{\|\rho (Q_G)\|^2}{\|\rho (Q_H)\|^2} .\label{eq:relationbij}\end{equation}

Consider the graph $G'$ on vertex set $\Pa$ with edges $\{PQ \mid P, Q\in \Pa,\ b^*_{P,Q}\neq 0\}$.
This is a connected graph, since $G$ is connected.
Note that the relation in Equation \eqref{eq:relationbij} is transitive: if $R\in \Pa$ such that $b^*_{Q,R}\neq 0$, then we have
\[\frac{\|\rho (P_G)\|^2}{\|\rho (P_H)\|^2}= \frac{\|\rho (Q_G)\|^2}{\|\rho (Q_H)\|^2} = \frac{\|\rho (R_G)\|^2}{\|\rho (R_H)\|^2}.\]
Since $G'$ is connected, this means that Equation \eqref{eq:relationbij} holds for any $P,Q\in \Pa$.
It follows that
\[\sum_{Q\in \Pa} \|\rho (Q_G)\|^2 \|\rho (P_H)\|^2= \sum_{Q\in \Pa} \|\rho (Q_H)\|^2 \|\rho (P_G)\|^2,\]
hence
\[\frac{\|\rho (P_G)\|^2}{\|\rho (P_H)\|^2} = \frac{\|\rho (V(G))\|^2}{\|\rho (V(H))\|^2}.\]

\end{proof}

If one restrict to equitable partitions, then
 propositions \ref{extension5.4Groheetal} and \ref{extension5.6Groheetal} give \cite[Lemma 5.4]{GKMS2014} and \cite[Lemma 5.6]{GKMS2014}, respectively.

	For stating the next characterization, we first need to introduce a linear operator. Let $G$ be a connected graph with Perron eigenvector $\nu$. Let $\mathds{V}$ be the vector space of all real functions on~$V$, and, for any partition~$\Pa$ of~$V$, let~$F(V,\Pa)$ be the subspace  of~$\mathds{V}$ that consists of all functions on~$V$ that are constant on the cells of~$\Pa$. Consider the linear operator~$\mathcal{B}\colon \mathds{V} \to \mathds{V}$ defined by

	\[(\mathcal{B}f)(u) \define \sum_{v\sim u} \frac{\nu_v}{\nu_u}f(u).\]
	
	\begin{lemma} Let $G$ be a connected graph and let $\Pa=\{V_1,\dots,V_m\}$ be a partition of $V$.
          Then, $\Pa$ is weight-equitable if and only if $F(V,\Pa)$ is $\mathcal{B}$-invariant.
		\label{lemma:changodsil1}
	\end{lemma}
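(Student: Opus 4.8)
The plan is to reduce $\mathcal{B}$-invariance of $F(V,\Pa)$ to a condition on the images of a convenient basis. Since the cell indicator functions $\mathds{1}_{V_1},\dots,\mathds{1}_{V_m}$ have pairwise disjoint supports and any function constant on the cells is $\sum_i c_i\mathds{1}_{V_i}$ with $c_i$ the value on $V_i$, they form a basis of $F(V,\Pa)$. By linearity of $\mathcal{B}$, the subspace $F(V,\Pa)$ is therefore $\mathcal{B}$-invariant if and only if $\mathcal{B}\mathds{1}_{V_j}\in F(V,\Pa)$ for every $j\in[m]$.

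The second step is a direct computation of these images. For $j\in[m]$ and a vertex $u\in V_i$, the definition of $\mathcal{B}$ gives
\[
  (\mathcal{B}\mathds{1}_{V_j})(u)=\sum_{v\sim u}\frac{\nu_v}{\nu_u}\,\mathds{1}_{V_j}(v)=\frac{1}{\nu_u}\sum_{v\in G(u)\cap V_j}\nu_v=b^*_{ij}(u),
\]
so $\mathcal{B}\mathds{1}_{V_j}$ is precisely the function assigning to each $u$ the weight-intersection number $b^*_{ij}(u)$ of the cell $V_i\ni u$ towards $V_j$. Since a function lies in $F(V,\Pa)$ exactly when it is constant on each cell, $\mathcal{B}\mathds{1}_{V_j}\in F(V,\Pa)$ if and only if $b^*_{ij}(u)=b^*_{ij}(v)$ for all $i\in[m]$ and all $u,v\in V_i$. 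Ranging over all $j\in[m]$, we conclude that $F(V,\Pa)$ is $\mathcal{B}$-invariant if and only if $b^*_{ij}(u)=b^*_{ij}(v)$ for all $i,j\in[m]$ and $u,v\in V_i$, which is exactly the definition of a weight-equitable partition, proving both implications at once.

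I do not expect a genuine obstacle here: the whole argument is elementary and no earlier result in the paper is needed. The only points that require care are recording explicitly that the cell indicators span $F(V,\Pa)$ (so that invariance may be tested on them), and matching $(\mathcal{B}\mathds{1}_{V_j})(u)$ with the quantity $b^*_{ij}(u)$ from the definition of the weight-intersection numbers. One should also read the displayed definition of $\mathcal{B}$ with the summand $\tfrac{\nu_v}{\nu_u}f(v)$ rather than $\tfrac{\nu_v}{\nu_u}f(u)$; with the latter one would have $\mathcal{B}f=\lambda_1 f$ for every $f$ and the statement would be vacuous, so the intended operator is $\mathcal{B}=N^{-1}AN$ with $N=\diag(\nu_1,\dots,\nu_n)$.
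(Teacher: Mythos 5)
Your proof is correct and follows essentially the same route as the paper's: both test $\mathcal{B}$-invariance on the basis of cell indicator functions and identify $(\mathcal{B}\mathds{1}_{V_j})(u)$ with the weight-intersection number $b^*_{ij}(u)$. Your remark about reading the definition of $\mathcal{B}$ with summand $\tfrac{\nu_v}{\nu_u}f(v)$ is well taken --- the paper's displayed formula indeed contains a typo ($f(u)$ instead of $f(v)$), and its own proof computes with $x_j(v)$, confirming your reading.
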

	\begin{proof}
The space $F(V,\Pa)$ is spanned by the characteristic functions $x_1,\dots, x_m$ which are one on cell $1,\dots,m$ respectively and zero elsewhere.
It therefore suffices to show $\mathcal{B}$-invariance of this basis.
Let $i,j\in[m]$ and $u\in V_i$.
Then
		\[(\mathcal{B}x_j)(u) = \sum_{v\sim u}\frac{\nu_v}{\nu_u} x_j(v) = \frac{1}{\nu_u}\sum_{v\in V_j\cap G(u)}\nu_v = b_{ij}^*(u).\]
This means that $\mathcal{B}x_j$ is constant on $V_i$ if and only if $b_{ij}^*(u)$ is independent of $u$.
Hence $\mathcal{B}x_j\in F(V,\Pa)$ if and only if $\Pa$ is weight-equitable.
	\end{proof}
	
	The set of partitions of $V$ can be viewed as a lattice where $\Qa\le \Pa$ if $\Qa$ is a refinement of $\Pa$.
 The \emph{meet} $\Pa \land \Qa$ of two partitions $\Pa$, $\Qa$ is the partition whose cells are the nonempty pairwise intersections of cells of $\Pa$ and $\Qa$.
  The \emph{join} $\Pa\vee\Qa$  is given by the connected components of the graph with edge set $\{\{u,v\} : u, v \text{ share a cell in } \Pa \text{ or } \Qa\}$.

	\begin{corollary} Let $\Pa$ and $\Qa$ be weight-equitable partitions of a connected graph $G$. Then the join $\Pa \vee \Qa$ is a weight-equitable partition.
		\label{lemma:changodsil2}
	\end{corollary}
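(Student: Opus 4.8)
The plan is to deduce this from Lemma~\ref{lemma:changodsil1}, which translates weight-equitability of a partition~$\Pa$ into $\mathcal{B}$-invariance of the subspace $F(V,\Pa)$ of functions constant on the cells of~$\Pa$. The decisive observation I would establish first is the identity
\[
  F(V,\Pa\vee\Qa) = F(V,\Pa)\cap F(V,\Qa).
\]
The inclusion ``$\subseteq$'' is immediate: both $\Pa$ and $\Qa$ are refinements of $\Pa\vee\Qa$, so a function that is constant on the (larger) cells of the join is in particular constant on the cells of $\Pa$ and of $\Qa$. For ``$\supseteq$'', take $f\in F(V,\Pa)\cap F(V,\Qa)$ and two vertices $u,v$ in the same cell of $\Pa\vee\Qa$. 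By the definition of the join there is a sequence $u=w_0,w_1,\dots,w_k=v$ in which each consecutive pair $w_{t-1},w_t$ shares a cell of~$\Pa$ or of~$\Qa$; since $f$ is constant on the cells of both partitions, $f(w_{t-1})=f(w_t)$ for every~$t$, so $f(u)=f(v)$. Hence $f$ is constant on the cells of $\Pa\vee\Qa$, proving the identity.

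With this in hand the corollary is short. Since $\Pa$ and $\Qa$ are weight-equitable, Lemma~\ref{lemma:changodsil1} gives that $F(V,\Pa)$ and $F(V,\Qa)$ are both $\mathcal{B}$-invariant. The intersection of two $\mathcal{B}$-invariant subspaces is trivially $\mathcal{B}$-invariant, so by the identity above $F(V,\Pa\vee\Qa)$ is $\mathcal{B}$-invariant. Applying Lemma~\ref{lemma:changodsil1} in the other direction then yields that $\Pa\vee\Qa$ is weight-equitable.

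I do not expect a genuine obstacle: the proof is a soft, lattice-theoretic argument once Lemma~\ref{lemma:changodsil1} is available. The only step that needs a little care is the characterization of $F(V,\Pa\vee\Qa)$ as the intersection $F(V,\Pa)\cap F(V,\Qa)$, since this is exactly where one must unwind the walk-based definition of the join~$\vee$; everything else is routine linear algebra. A direct argument straight from the weight-intersection numbers $b^*_{ij}$ is possible but more cumbersome, because one would have to track how cells of the join decompose into cells of $\Pa$ and~$\Qa$; routing through the operator~$\mathcal{B}$ keeps the bookkeeping trivial.
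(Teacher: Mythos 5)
Your proof is correct and follows essentially the same route as the paper: both reduce the claim to $\mathcal{B}$-invariance of $F(V,\Pa\vee\Qa)$ via Lemma~\ref{lemma:changodsil1}. The only difference is that the paper outsources the key step to \cite[Lemma 2.3]{CG1997}, whereas you prove it inline via the identity $F(V,\Pa\vee\Qa)=F(V,\Pa)\cap F(V,\Qa)$, which is exactly the content of that cited lemma.
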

	\begin{proof}
		Since $\Pa$ and $\Qa$ are both weight-equitable, Lemma \ref{lemma:changodsil1} implies that $F(V,\Pa)$ and $F(V,\Qa)$ are both $\mathcal{B}$-invariant.
It then follows from \cite[Lemma 2.3]{CG1997} that $F(V,\Pa\vee\Qa)$ is also $\mathcal{B}$-invariant, so $\Pa\vee\Qa$ is weight-equitable.
	\end{proof}
		
	As a result of Lemma \ref{lemma:changodsil1} and Corollary \ref{lemma:changodsil2}, we obtain the equitable partition results from \cite[Section 5]{CG1997}.
 Also, observe that as a consequence of Corollary \ref{lemma:changodsil2}, any partition $\Pa$ has a unique maximal weight-equitable refinement, which is given by the join of all the weight-equitable partitions that refine $\Pa$.
 Contrary to the join, the meet of two weight-equitable partitions need not be weight-equitable, as illustrated in Example \ref{ex:meetNotWR}.
	
	\begin{figure}[t]
		\centering
		\begin{tikzpicture}
			\node[draw=none] (A1) at (0,0.5) {$1$};
			\node[draw=none] (B1) at (1,0.5) {$2$};
			\node[draw=none] (C1) at (2,0.5) {$3$};
			\node[draw=none] (D1) at (0,-1.5) {$4$};
			\node[draw=none] (E1) at (1,-1.5) {$5$};
			\node[draw=none] (F1) at (2,-1.5) {$6$};
			\node[shape=circle,fill=black,inner sep=2pt] (A) at (0,0) {};
			\node[shape=circle,fill=black,inner sep=2pt] (B) at (1,0) {};
			\node[shape=circle,fill=black,inner sep=2pt] (C) at (2,0) {};
			\node[shape=circle,fill=black,inner sep=2pt] (D) at (0,-1) {};
			\node[shape=circle,fill=black,inner sep=2pt] (E) at (1,-1) {};
			\node[shape=circle,fill=black,inner sep=2pt] (F) at (2,-1) {};
			
			\draw[] (A) -- (B) -- (C) -- (F) -- (E) -- (D) -- (A);
			\draw[] (D) -- (B) -- (E) -- (C);
			\draw[] (A) -- (E);
			\draw[] (B) -- (F);
		\end{tikzpicture}
		\caption{A graph with two weight-equitable partitions, $\{\{1,3,5\},\{2,4,6\}\}$ and $\{\{1,5,6\},\{2,3,4\}\}$, whose meet is not weight-equitable.}\label{fig:meetNotWR}
	\end{figure}
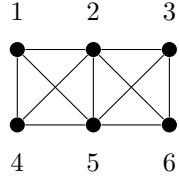
	
	\begin{example} Consider the graph $G$ given in Figure \ref{fig:meetNotWR} with Perron eigenvector $(1,\sqrt{2},1,1,\sqrt{2},1)$.
The partitions $\Pa = \{\{1,3,5\},\{2,4,6\}\}$ and $\Qa = \{\{1,5,6\},\{2,3,4\}\}$ of $G$ are weight-equitable, but not equitable.
However, the meet $\Pa\land \Qa = \{\{1,5\},\{3\},\{6\},\{2,4\}\}$ is not a weight-equitable partition.
		\label{ex:meetNotWR}
	\end{example}

		In Section \ref{sec:experiments} we will show an application of the Corollary \ref{lemma:changodsil2} for computing coarse weight-equitable partitions of cographs.
	
	\section{Computational Aspects of Weight-Equitable Partitions}\label{sec:algo}

        As mentioned in the introduction, there are many cases in which one
        is interested in finding coarse weight-equitable partitions. While it is known that the coarsest equitable partition of a graph can be found in polynomial time, see for example Corneil et al.~\cite{CC} and Bastert~\cite{B1999}, nothing seems to be known about computing weight-equitable partitions. For the latter, Bastert's approach~\cite{B1999} becomes trivial, since the initialization step of his algorithm includes all vertices in the same cell, a case which is always weight-equitable (see Table \ref{table:relations}). Thus, Bastert's algorithm cannot be generalized to weight-equitable partitions immediately.

	The aim of this section is to investigate the potential of the join
        operator derived in the previous section in generating coarse
        partitions from finer ones.
        Moreover, a natural choice for a fine partition is to study
        partitions all of whose cells have the same size~$c$.
        We call such a partition \emph{$c$-homogeneous}, where the
        parameter~$c$ controls the coarseness of the partition.

        To study the potential of the join operator systematically,
        however, we need to be able to efficiently generate fine
        weight-equitable partitions.
        Since finding equitable 2-homogeneous partitions is NP-hard
        (Section~\ref{sec:complexity}), we focus in our investigation on
        the join operator on cographs.
        We will see that for such graphs the concept of equitability and
        weight-equitability coincides for 2-homogeneous partitions and that we can find
        (weight-) equitable 2-homogeneous partitions very efficiently
        (Section~\ref{sec:cographs}).
        Based on these results, Section~\ref{sec:experiments} investigates
        the capability of the join operator to find coarse weight-equitable
        partitions.
	
	\subsection{Finding Fine Equitable Partitions is Hard}\label{sec:complexity}
	
	Let~$G$ be an undirected graph.
	An \emph{automorphism} of~$G$ is a bijection~$\gamma\colon V \to V$ that preserves
	adjacency, i.e., $\{\gamma(u),\gamma(v)\} \in E$ if and only if~$\{u,v\}
	\in E$.
        An automorphism $\gamma$ is \emph{fixed-point-free} if there is no
        vertex~$v$ such that $\gamma(v) = v$.
        The \emph{order} of $\gamma$ is
        the smallest positive integer $i$ such that $\gamma^i$ is the
        identity. An automorphism of order two is called an
        \emph{involution}.
	
	Lubiw~\cite{Lubiw1981} studied the complexity of several
        algorithmic problems related to graph automorphism.
        In particular, she has shown that deciding whether a given graph $G$ has a fixed-point-free automorphism of order two is NP-complete.
        With the following observation, we can link this to the complexity
        of finding certain equitable partitions.
	
	\begin{lemma}
		\label{lem:automorphismIFFpartitionRegular}
		Let~$G$ be an undirected graph.
		Then, $G$ has an automorphism being an involution without fixed points if
		and only if~$G$ admits an equitable partition
		with~$\frac{n}{2}$ cells each having size~2.
	\end{lemma}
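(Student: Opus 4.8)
The plan is to prove each direction of the equivalence by an explicit construction. For the ``only if'' direction, suppose $\gamma$ is a fixed-point-free involution of $G$, and take $\mathcal{P}$ to be the orbit partition of the cyclic group $\langle\gamma\rangle$. Since $\gamma$ has order two, every orbit has size one or two, and since $\gamma$ has no fixed point, every orbit has size exactly two; hence $\mathcal{P}$ consists of $\tfrac{n}{2}$ cells of size~$2$. It then remains to invoke the standard fact that an orbit partition of a group of automorphisms is equitable. Concretely, for cells $V_i = \{v, \gamma v\}$ and $V_j$, one notes that $V_j$ is itself a $\langle\gamma\rangle$-orbit and $\gamma = \gamma^{-1}$, so $\gamma(V_j) = V_j$; since $\gamma$ is an automorphism it maps $G(v)$ onto $G(\gamma v)$, and therefore $b_{ij}(v) = |G(v)\cap V_j| = |G(\gamma v)\cap V_j| = b_{ij}(\gamma v)$, which is exactly equitability.

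For the ``if'' direction, suppose $\mathcal{P} = \{V_1,\dots,V_{n/2}\}$ is equitable with $|V_i| = 2$ for all $i$, say $V_i = \{a_i, b_i\}$ with $a_i \neq b_i$. I would define $\gamma\colon V \to V$ to be the permutation that simultaneously transposes $a_i$ and $b_i$ within every cell. This is visibly a fixed-point-free involution, so the whole task reduces to checking that $\gamma$ preserves adjacency. Inside a single cell this is trivial, since the only possible edge is $\{a_i,b_i\}$ and $\gamma$ fixes that pair. For two distinct cells $V_i, V_j$, write $k \define b_{ij}(a_i) = b_{ij}(b_i) \in \{0,1,2\}$ for the common intersection number. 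The cases $k = 0$ (no edges between $V_i$ and $V_j$) and $k = 2$ (all four possible edges present, so the induced bipartite graph is complete and invariant under any swap on each side) are immediate.

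The main obstacle is the case $k = 1$: a priori the two edges between $V_i$ and $V_j$ could form a ``cherry'', both incident to a single vertex of $V_j$, and then $\gamma$ would not preserve adjacency. The resolution is to read equitability from the $V_j$-side as well: counting $|E(V_i,V_j)|$ from both sides gives $2k = \sum_{u\in V_i}|G(u)\cap V_j| = \sum_{u\in V_j}|G(u)\cap V_i| = 2\,b_{ji}(a_j)$, so $b_{ji}(a_j) = b_{ji}(b_j) = 1$ too. Hence the induced bipartite graph on $\{a_i,b_i\}\cup\{a_j,b_j\}$ is $1$-regular, i.e.\ a perfect matching, and either perfect matching is invariant under the simultaneous transposition $a_i\leftrightarrow b_i$, $a_j\leftrightarrow b_j$. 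Since $i,j$ were arbitrary, $\gamma$ preserves all adjacencies and non-adjacencies, so it is an automorphism. Everything outside this $k=1$ observation is routine bookkeeping; that two-sided counting step is the only place where one genuinely uses equitability rather than merely the cell sizes.
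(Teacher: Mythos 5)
Your proposal is correct and follows essentially the same route as the paper's proof: the reverse direction uses the orbit partition of the fixed-point-free involution, and the forward direction defines the simultaneous within-cell transposition and handles the only nontrivial case (one neighbor in the other cell) by reading equitability from the other cell's side to rule out the ``cherry'' configuration, exactly as the paper does.
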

	\begin{proof}
		Suppose~$G$ has an equitable partition~$\{V_1, \dots, V_m\}$ with~$m
		= \frac{n}{2}$ and~$\card{V_i} = 2$ for all~$i \in [m]$.
		For every~$i \in [m]$, we assume that~$V_i$ is given by~$\{i_1, i_2\}$.
		To prove the first part of the assertion, we show that~$\gamma = \prod_{i = 1}^m
		(i_1, i_2)$ is an automorphism of~$G$ that is an involution without
		fixed points.
		Since~$\{V_1, \dots, V_m\}$ is a partition of~$V$ and all cells have
		cardinality~2, $\gamma$ is an involution without fixed points.
		Hence, it remains to show that~$\gamma$ is an automorphism of~$G$.
		
		Let~$i, j \in [m]$ and~$r,s \in [2]$.
		Moreover, let~$r'$ and~$s'$ be the complementary indices of~$r$ and~$s$
		in~$[2]$, respectively.
		To show that~$\gamma$ is an automorphism, we need to show that~$e
		= \{i_{r}, j_{s}\} \in E$ if and only if~$\gamma(e) = \{i_{r'},
		j_{s'}\} \in E$.
		Observe that both~$i_r$ and~$i_{r'}$ have the same number of neighbors
		in~$U_j$, since $\{V_1, \dots, V_m\}$ is equitable.
		Consequently, if~$i_r$ is not adjacent with~$U_j$, also~$i_{r'}$ is not,
		which implies~$e, \gamma(e) \notin E$.
		Furthermore, if~$i_r$ is adjacent with~$j_{s}$ and~$j_{s'}$, so
		is~$i_{r'}$.
		Hence, $e, \gamma(e) \in E$.
		Finally, it remains to consider the case that~$i_r$ and~$i_{r'}$
		are adjacent with exactly one vertex in~$U_j$.
		Then, both cannot be adjacent with the same vertex in~$U_j$, because
		otherwise one vertex in~$U_j$ would have two neighbors in~$U_i$ while the
		other has no neighbor in~$U_i$, contradicting equitability.
		This again implies~$e \in E$ if and only if~$\gamma(e) \in E$, concluding
		the first part of the proof.
		
		For the reverse direction, let $\gamma = \prod_{i = 1}^m (i_1, i_2)$ be an involutionary
		automorphism of~$G$ without fixed points. Then $\{V_1, \dots, V_m\}$, where, for
		every~$i \in [m]$, $V_i = \{i_1, i_2\}$, is a partition of~$V$.
		As~$\gamma$ is an automorphism   of~$G$, each node in~$V_i$ has the same number of neighbors in~$V_j$ for all~$i,j \in [m]$. Hence $\{V_1, \dots, V_m\}$ is an equitable partition.
	\end{proof}
	
	Due to the aforementioned result by Lubiw~\cite{Lubiw1981}, we thus
	conclude the following result.
	
	\begin{corollary}
		\label{cor:partitionNPhard}
        Deciding whether a given graph $G$ admits an equitable partition with $\frac{n}{2}$ cells is NP-complete.
	\end{corollary}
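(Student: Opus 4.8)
The plan is to prove membership in NP and NP-hardness separately. Membership in NP is immediate: a partition $\{V_1,\dots,V_m\}$ of $V$ is a polynomial-size certificate, and one verifies in polynomial time that $m = \frac{n}{2}$ and that $\card{G(u)\cap V_j} = \card{G(v)\cap V_j}$ for all $i,j\in[m]$ and all $u,v\in V_i$.

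The heart of the NP-hardness is \cref{lem:automorphismIFFpartitionRegular} together with Lubiw's theorem~\cite{Lubiw1981} that deciding whether a graph has a fixed-point-free involution is NP-complete. The lemma states that a graph has such an involution if and only if it admits an equitable partition with $\frac{n}{2}$ cells, \emph{each of size $2$}. Hence the reduction that maps a Lubiw instance to itself shows at once that deciding whether $G$ admits an equitable partition with $\frac{n}{2}$ cells each of size $2$ is NP-complete.

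The only remaining point --- and the one needing care --- is to pass from ``$\frac{n}{2}$ cells each of size $2$'' to ``$\frac{n}{2}$ cells'' with no size constraint, which is not automatic: equitable partitions with $\frac{n}{2}$ cells of uneven sizes do exist (for even $n \ge 4$ the star $K_{1,n-1}$ admits the equitable partition formed by the center as a singleton together with any split of the leaves into $\frac{n}{2}-1$ groups, yet has no fixed-point-free involution). The way I would close this is to reduce $G$ not to itself but to a padded graph $G'$ in which no equitable partition with $\card{V(G')}/2$ cells can have a cell of size $\neq 2$ --- exploiting that every cell of an equitable partition is degree-homogeneous, since $\deg(u)=\sum_j b_{ij}(u)$ is constant on each cell --- while $G'$ still has a fixed-point-free involution exactly when $G$ does. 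With such a $G'$, \cref{lem:automorphismIFFpartitionRegular} applied to $G'$ gives that $G'$ admits an equitable partition with $\card{V(G')}/2$ cells iff $G'$ has a fixed-point-free involution iff $G$ does, so $G \mapsto G'$ is the required reduction.

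I expect the main obstacle to be the design of this padding: it must simultaneously constrain the degree pattern of $G'$ tightly enough to rule out cells of size other than $2$, preserve every fixed-point-free involution of $G$, and introduce no new one --- so the delicate part is reconciling a fineness requirement (needed to force size-$2$ cells) with a symmetry requirement (needed to transfer involutions in both directions). Everything else, including the polynomial running time of the reduction and the two easy directions of the equivalence, is routine. In particular, for the weaker statement about equitable partitions with $\frac{n}{2}$ cells each of size $2$, this obstacle is absent and the result is simply \cref{lem:automorphismIFFpartitionRegular} combined with Lubiw's theorem.
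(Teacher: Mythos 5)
Your core argument---membership in NP via the partition itself as a certificate, and NP-hardness from \cref{lem:automorphismIFFpartitionRegular} combined with Lubiw's theorem---is exactly the paper's proof, which consists of nothing more than invoking that lemma together with Lubiw's result. The additional issue you raise is genuine and is not addressed by the paper: the corollary as literally worded omits the qualifier ``each cell having size~$2$'', and without it the equivalence with fixed-point-free involutions fails. Your star example is correct: for $n=4$, the partition of $K_{1,3}$ into the center and the set of three leaves is equitable with $n/2=2$ cells, yet every automorphism of $K_{1,3}$ fixes the center, so there is no fixed-point-free involution. The sentence immediately following the corollary in the paper (``\dots whether a graph admits an equitable partition with cells of size two'') shows that the intended statement carries the size-$2$ constraint, and for that statement your first two paragraphs already constitute a complete proof identical to the paper's. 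Your proposed padding construction is therefore not needed for the intended reading; since you only sketch it and correctly flag the tension between forcing size-$2$ cells and preserving involutions in both directions, it should be regarded as an open repair of the \emph{literal} statement rather than as part of the proof. In short: no gap relative to what the paper actually proves, and a legitimate observation that the corollary's wording should be tightened to match \cref{lem:automorphismIFFpartitionRegular}.
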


	\subsection{Finding Weight-Equitable Partitions for Cographs}\label{sec:cographs}
	
	Corollary \ref{cor:partitionNPhard} shows that, unless $\text{P} =
        \text{NP}$, we can generally not decide in polynomial time whether a graph admits an equitable partition with cells of size two.
        However, for certain graph families we can exploit the graph
        structure to obtain an efficient algorithm to compute such partitions.
        We will show that for the class of cographs the existence of these
        specific equitable partitions can be decided in polynomial time.
        \begin{definition}
          An undirected simple graph is called \emph{cograph} if it does
          not contain an induced $P_4$.
        \end{definition}
    	Cographs have been studied extensively in the literature. From the spectral point of view, Jung~\cite{J} introduced an algorithm for locating eigenvalues of cographs in a given interval. Ghorbani~\cite{G2018} provided a new characterization of cographs, and further properties of the eigenvalues (of the adjacency matrix) of a cograph were explored, e.g., by Ghorbani~\cite{G2019}, Mohammadian and Trevisan~\cite{MT2016}, and Jacobs et al.~\cite{JTT2018}.

        Before we describe an algorithm to find 2-homogeneous (weight-)
        equitable partitions of a cograph~$G$, we first show that there is
        indeed no difference between 2-homogeneous weight-equitable
        partitions and equitable partitions.
	\begin{lemma}
          \label{lem:automorphismIFFpartitionWeightRegular}
          Let~$G$ be a connected cograph. Then, every
          weight-equitable partition with~$\frac{n}{2}$ cells each
          having size~2 of~$G$ is an equitable partition of~$G$.
	\end{lemma}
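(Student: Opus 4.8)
The plan is to prove the contrapositive by a short case analysis, using that a cograph has no induced $P_4$ together with the fact that over a two-element cell the possible subset sums of the weights are rigid. So suppose $\mathcal{P}=\{V_1,\dots,V_{n/2}\}$ is weight-equitable with every cell of size~$2$ but not equitable; I aim for a contradiction. Non-equitability yields cells $V_i=\{i_1,i_2\}$ and $V_j=\{j_1,j_2\}$ with $|G(i_1)\cap V_j|\neq|G(i_2)\cap V_j|$, and necessarily $V_i\neq V_j$ since a two-element cell sends equally many edges to each of its own members; so $i_1,i_2,j_1,j_2$ are distinct, and $\nu$ is strictly positive because $G$ is connected. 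After swapping $i_1,i_2$ assume $|G(i_1)\cap V_j|>|G(i_2)\cap V_j|$; if $|G(i_2)\cap V_j|=0$ then $b^{*}_{ij}(i_2)=0<b^{*}_{ij}(i_1)$ contradicts weight-equitability, so the surviving case is $|G(i_1)\cap V_j|=2$ and $|G(i_2)\cap V_j|=1$. Relabel $V_j$ so that $i_2\sim j_1$, $i_2\not\sim j_2$ (hence $i_1\sim j_1$ and $i_1\sim j_2$); then $b^{*}_{ij}(i_1)=b^{*}_{ij}(i_2)$ reads $(\nu_{j_1}+\nu_{j_2})/\nu_{i_1}=\nu_{j_1}/\nu_{i_2}$, i.e.\ $(\nu_{i_1}-\nu_{i_2})\nu_{j_1}=\nu_{i_2}\nu_{j_2}>0$, so $\nu_{i_1}>\nu_{i_2}$; in particular $\nu$ is non-constant on $V_i$.

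Next I would exploit $P_4$-freeness. Because $\nu_{i_1}\neq\nu_{i_2}$, the cell $V_i$ cannot be an edge: if $i_1\sim i_2$ then $b^{*}_{ii}(i_1)=\nu_{i_2}/\nu_{i_1}$ equals $b^{*}_{ii}(i_2)=\nu_{i_1}/\nu_{i_2}$, forcing $\nu_{i_1}=\nu_{i_2}$. Hence $i_1\not\sim i_2$. If in addition $j_1\not\sim j_2$, then among $\{i_1,i_2,j_1,j_2\}$ the only edges are $i_2j_1$, $j_1i_1$, $i_1j_2$, so $i_2-j_1-i_1-j_2$ is an induced $P_4$, which is impossible in a cograph. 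Therefore $j_1\sim j_2$, and then $b^{*}_{jj}(j_1)=b^{*}_{jj}(j_2)$ forces $\nu_{j_1}=\nu_{j_2}$.

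Finally, since $\nu_{j_1}=\nu_{j_2}$, the equality $b^{*}_{ji}(j_1)=b^{*}_{ji}(j_2)$ simplifies to $\sum_{v\in G(j_1)\cap V_i}\nu_v=\sum_{v\in G(j_2)\cap V_i}\nu_v$. Each side is a subset sum of $\{\nu_{i_1},\nu_{i_2}\}$, and because $\nu_{i_1},\nu_{i_2}>0$ with $\nu_{i_1}\neq\nu_{i_2}$ the four possible values $0$, $\nu_{i_1}$, $\nu_{i_2}$, $\nu_{i_1}+\nu_{i_2}$ are pairwise distinct; hence $G(j_1)\cap V_i=G(j_2)\cap V_i$. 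But $i_2\in G(j_1)\cap V_i$ while $i_2\notin G(j_2)\cap V_i$, a contradiction, and the lemma follows.

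I expect the only non-routine point to be the middle step: one must notice that the non-constancy of $\nu$ on $V_i$ is exactly what forbids $V_i$ from being an edge, which is what allows $V_i\cup V_j$ to span an induced $P_4$ unless $V_j$ is an edge — and an edge cell must carry equal weights, after which the subset-sum rigidity closes the argument at once. Everything else is just unwinding the definition of the weight-intersection numbers $b^{*}_{ij}$. (One could instead phrase the whole argument through Proposition~\ref{lemma:weightequitableentriedPerron}, which identifies equitability of $\mathcal{P}$ with $\nu$ being constant on the cells of $\mathcal{P}$, but that detour is unnecessary.)
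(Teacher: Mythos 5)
Your proof is correct. The heart of it---reducing to the configuration where $i_1$ is adjacent to both of $j_1,j_2$ while $i_2$ is adjacent to exactly one, then splitting on whether $V_i$ and $V_j$ induce edges, using the weight-intersection equations $b^*_{ii}(i_1)=b^*_{ii}(i_2)$ and $b^*_{ij}(i_1)=b^*_{ij}(i_2)$ to kill the edge cases and $P_4$-freeness to kill the non-edge case---is exactly the case analysis in the first half of the paper's proof. Where you differ is in the overall architecture: the paper runs this analysis only as a preliminary step, then goes on to enumerate all possible two-cell induced subgraphs, handles the empty and perfect-matching configurations via a shortest-path argument inside the cograph, and finally concludes equitability by showing $\nu$ is constant on every cell and invoking Proposition~\ref{lemma:weightequitableentriedPerron}. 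You instead argue directly by contradiction from a single witnessing pair of cells, extract $\nu_{i_1}>\nu_{i_2}$ from the weight equation, and close with the subset-sum rigidity of $\{\nu_{i_1},\nu_{i_2}\}$, so you never need the subgraph classification, the path argument, or the detour through constancy of the Perron eigenvector. Your route is shorter and self-contained (it also makes fully explicit the step the paper dismisses as ``symmetric''), at the cost of not producing the structural by-product that $\nu$ is constant on every cell, which the paper's version yields along the way.
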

	\begin{proof}
          Suppose~$G$ has a weight-equitable partition~$\{V_1, \dots, V_m\}$ with~$m  = \frac{n}{2}$ and~$\card{V_i} = 2$ for every~$i \in [m]$. For every~$i \in [m]$, we assume that~$V_i$ is given by~$\{i_1, i_2\}$. Consider two arbitrary cells $V_i$ and $V_j$ and their induced subgraph. Note that if $i_1$ has no neighbors in $V_j$, then neither does $i_2$ and vice versa. As we show next, $i_1$ and $i_2$ must have the same number of neighbors in $V_j$.

          \begin{figure}[t]
            \centering
            \begin{tikzpicture}
              \node[shape=circle,fill=black,inner sep=2pt] (A1) at (0,0) {};
              \node[shape=circle,fill=black,inner sep=2pt] (B1) at (1,0) {};
              \node[shape=circle,fill=black,inner sep=2pt] (C1) at (1,1) {};
              \node[shape=circle,fill=black,inner sep=2pt] (D1) at (0,1) {};
              \node[fill=none,draw=none] (E1) at (0.5,-0.5) {(a)};
              \draw[] (A1)--(B1)--(C1)--(D1)--(A1);
              \draw[black,thick,dotted] (-0.2,0.5)--(1.2,0.5);

              \node[shape=circle,fill=black,inner sep=2pt] (A2) at (2,0) {};
              \node[shape=circle,fill=black,inner sep=2pt] (B2) at (3,0) {};
              \node[shape=circle,fill=black,inner sep=2pt] (C2) at (3,1) {};
              \node[shape=circle,fill=black,inner sep=2pt] (D2) at (2,1) {};
              \node[draw=none] (E2) at (2.5,-0.5) {(b)};
              \draw[] (A2)--(D2)--(B2)--(C2)--(A2);
              \draw[black,thick,dotted] (1.8,0.5)--(3.2,0.5);

              \node[shape=circle,fill=black,inner sep=2pt] (A3) at (4,0) {};
              \node[shape=circle,fill=black,inner sep=2pt] (B3) at (5,0) {};
              \node[shape=circle,fill=black,inner sep=2pt] (C3) at (5,1) {};
              \node[shape=circle,fill=black,inner sep=2pt] (D3) at (4,1) {};
              \node[draw=none] (E3) at (4.5,-0.5) {(c)};
              \draw[] (A3)--(B3)--(C3)--(D3)--(A3);
              \draw[] (A3)--(C3);
              \draw[] (B3)--(D3);
              \draw[black,thick,dotted] (3.8,0.5)--(5.2,0.5);

              \node[shape=circle,fill=black,inner sep=2pt] (A4) at (6,0) {};
              \node[shape=circle,fill=black,inner sep=2pt] (B4) at (7,0) {};
              \node[shape=circle,fill=black,inner sep=2pt] (C4) at (7,1) {};
              \node[shape=circle,fill=black,inner sep=2pt] (D4) at (6,1) {};
              \node[draw=none] (E4) at (6.5,-0.5) {(d)};
              \draw[] (B4)--(C4)--(D4)--(A4);
              \draw[] (A4)--(C4);
              \draw[] (B4)--(D4);
              \draw[black,thick,dotted] (5.8,0.5)--(7.2,0.5);

              \node[shape=circle,fill=black,inner sep=2pt] (A5) at (8,0) {};
              \node[shape=circle,fill=black,inner sep=2pt] (B5) at (9,0) {};
              \node[shape=circle,fill=black,inner sep=2pt] (C5) at (9,1) {};
              \node[shape=circle,fill=black,inner sep=2pt] (D5) at (8,1) {};
              \node[draw=none] (E5) at (8.5,-0.5) {(e)};
              \draw[] (A5)--(B5);
              \draw[] (C5)--(D5);
              \draw[black,thick,dotted] (7.8,0.5)--(9.2,0.5);

              \node[shape=circle,fill=black,inner sep=2pt] (A6) at (10,0) {};
              \node[shape=circle,fill=black,inner sep=2pt] (B6) at (11,0) {};
              \node[shape=circle,fill=black,inner sep=2pt] (C6) at (11,1) {};
              \node[shape=circle,fill=black,inner sep=2pt] (D6) at (10,1) {};
              \node[draw=none] (E6) at (10.5,-0.5) {(f)};
              \draw[black,thick,dotted] (9.8,0.5)--(11.2,0.5);

              \node[shape=circle,fill=black,inner sep=2pt] (A7) at (12,0) {};
              \node[shape=circle,fill=black,inner sep=2pt] (B7) at (13,0) {};
              \node[shape=circle,fill=black,inner sep=2pt] (C7) at (13,1) {};
              \node[shape=circle,fill=black,inner sep=2pt] (D7) at (12,1) {};
              \node[draw=none] (E7) at (12.5,-0.5) {(g)};
              \draw[] (A7)--(D7);
              \draw[] (B7)--(C7);
              \draw[black,thick,dotted] (11.8,0.5)--(13.2,0.5);

            \end{tikzpicture}
            \caption{All possible subgraphs induced by two cells of a
              weight-equitable partition of a cograph with cells of size
              two.
              The dotted line separates the two cells.
            }\label{fig:subgraph2cells}
          \end{figure}
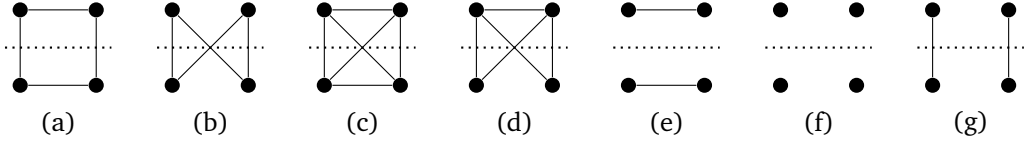

          Without loss of generality, assume that $i_1$ is adjacent to both $j_1$ and $j_2$ and $i_2$ only to $j_1$. If $i_1\sim i_2$, then weight-equitability ensures that $\nu_{i_1}=\nu_{i_2}$, as $b^{*}_{ii}(i_1) = b^{*}_{ii}(i_2)$. At the same time, it must hold that $b^*_{ij}(i_1) = b^*_{ij}(i_2)$. This implies that $\nu_{j_1} = \nu_{j_1}+\nu_{j_2}$, contradicting the fact that $\nu > 0$. If $i_1\not\sim i_2$ and $j_1\not\sim j_2$, then $V_i$ and $V_j$ induce a path of length three, which contradicts the fact that $G$ is cograph. The case $j_1\sim j_2$ is symmetric to $i_1\sim i_2$, $j_1\not\sim j_2$. This means that vertices that share a cell have an equal number of neighbors in every other cell.

          Figure \ref{fig:subgraph2cells} shows (up to symmetry) all possible induced subgraphs of two cells that satisfy the above requirement. In graph (a)-(e), weight-equitability directly implies that the Perron eigenvector is constant over both cells. We will show that this is also the case for (f) and (g). By Proposition \ref{lemma:weightequitableentriedPerron}, a weight-equitable partition with constant Perron entries over each part is equitable, so the result follows.

          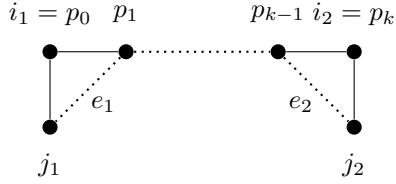
\begin{figure}[t]
            \centering
            \begin{tikzpicture}
              \node[shape=circle,fill=black,inner sep=2pt] (A) at (0,0) {};
              \node[shape=circle,fill=black,inner sep=2pt] (B) at (1,0) {};
              \node[shape=circle,fill=black,inner sep=2pt] (C) at (3,0) {};
              \node[shape=circle,fill=black,inner sep=2pt] (D) at (4,0) {};
              \node[shape=circle,fill=black,inner sep=2pt] (E) at (0,-1) {};
              \node[shape=circle,fill=black,inner sep=2pt] (F) at (4,-1) {};
              \node[fill=none,draw=none] (i1) at (0,0.5) {$i_1=p_0$};
              \node[fill=none,draw=none] (i2) at (4,0.5) {$i_2=p_{k}$};
              \node[fill=none,draw=none] (j1) at (0,-1.5) {$j_1$};
              \node[fill=none,draw=none] (j2) at (4,-1.5) {$j_2$};
              \node[fill=none,draw=none] (p1) at (1,0.5) {$p_1$};
              \node[fill=none,draw=none] (pk1) at (3,0.5) {$p_{k-1}$};
              \node[fill=none,draw=none] (e1) at (0.7,-0.7) {$e_1$};
              \node[fill=none,draw=none] (e2) at (3.3,-0.7) {$e_2$};
              \draw[dotted,thick] (B)--(C);
              \draw[dotted,thick] (B)--(E);
              \draw[dotted,thick] (C)--(F);
              \draw[] (B)--(A)--(E);
              \draw[] (C)--(D)--(F);

            \end{tikzpicture}
            \caption{The subgraph induced by (g) and a shortest path between its two edges.}\label{fig:subgraph2cellspath}
          \end{figure}

          Assume that $V_i$ and $V_j$ induce the empty subgraph. Since $G$ is connected, there must be some part $V_k\neq V_j$ such that $G[V_i\cup V_k]$ is not empty, so we may instead consider one of the other subgraphs to determine $\nu$ on $V_i$. If $V_i$ and $V_j$ induce subgraph (g), consider a shortest path between either endpoint of edge $\{i_1,j_1\}$ and $\{i_2,j_2\}$. Figure \ref{fig:subgraph2cellspath} shows the general shape of the graph induced by this path and $V_i$, $V_j$, assuming without loss of generality that $i_1$ and $i_2$ are the endpoints of the path. Note that there always exists an induced path of length three, unless both edges $e_1$ and $e_2$ exist and $p_1 = p_{k-1}$. Since $G$ is a cograph, we only need to consider the latter case. Let $v$ be the vertex which forms a cell with $p_1$. This vertex must have the same number of neighbors in $V_i$, so it is adjacent to $i_1$ and $i_2$. Then $V_i$ and $\{p_1,v\}$ induce either case (b) or (d), hence $\nu$ is constant over~$V_i$.
	\end{proof}
	
	As equitable partitions are always weight-equitable, we obtain the following corollary.
	
	\begin{corollary}
          \label{cor:wrCographs}
          Let~$G$ be a connected cograph.
          Then, $G$ has an automorphism being an involution without fixed points if and only if~$G$ admits a (weight-) equitable partition with~$\frac{n}{2}$ cells each having size~2.
	\end{corollary}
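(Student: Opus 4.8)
The plan is to deduce this purely by chaining together the two lemmas just established, so no new graph-theoretic argument is needed. First I would observe that Lemma~\ref{lem:automorphismIFFpartitionRegular} already gives the ``equitable'' reading of the statement verbatim, and that this part does not use that $G$ is a cograph at all. It therefore remains only to show that, for a connected cograph, the property ``$G$ admits an equitable partition with $\frac{n}{2}$ cells of size~$2$'' and the property ``$G$ admits a weight-equitable partition with $\frac{n}{2}$ cells of size~$2$'' are equivalent.

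For the implication from the equitable to the weight-equitable version, I would simply invoke the fact recalled earlier in the paper (following from \cite[Lemma~2.2]{mscthesisaida}) that every equitable partition is weight-equitable; the number and sizes of the cells are unchanged, so a witnessing equitable partition is also a witnessing weight-equitable partition. For the converse, suppose $G$ has a weight-equitable partition with $\frac{n}{2}$ cells each of size~$2$. Since $G$ is a connected cograph, Lemma~\ref{lem:automorphismIFFpartitionWeightRegular} tells us this partition is in fact equitable, and it of course still has $\frac{n}{2}$ cells of size~$2$. Hence, over connected cographs, the two notions coincide for this class of partitions.

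Putting these together yields the chain: $G$ admits a weight-equitable partition with $\frac{n}{2}$ cells of size~$2$ $\iff$ $G$ admits an equitable partition with $\frac{n}{2}$ cells of size~$2$ $\iff$ (by Lemma~\ref{lem:automorphismIFFpartitionRegular}) $G$ has a fixed-point-free involution, which establishes both the ``equitable'' and ``weight-equitable'' forms of the corollary at once. I do not expect any real obstacle at this stage, since every step is a direct appeal to a previously proved statement; the genuine work has already been done in Lemma~\ref{lem:automorphismIFFpartitionWeightRegular}, whose cograph-specific case analysis (the induced two-cell subgraphs of Figure~\ref{fig:subgraph2cells}, together with the shortest-path argument handling the empty case) is precisely what makes the weight-equitable-to-equitable collapse go through.
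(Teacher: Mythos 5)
Your proposal is correct and matches the paper's argument exactly: the paper also derives the corollary by combining Lemma~\ref{lem:automorphismIFFpartitionRegular}, Lemma~\ref{lem:automorphismIFFpartitionWeightRegular}, and the fact that equitable partitions are always weight-equitable. Your write-up merely spells out the chain of equivalences that the paper's one-line proof leaves implicit.
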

	\begin{proof}
          The result follows directly from Lemma \ref{lem:automorphismIFFpartitionRegular} and Lemma \ref{lem:automorphismIFFpartitionWeightRegular}.
	\end{proof}
	Note that partitions with cells of size two are not necessarily the
        finest equitable partitions of cographs. Figure \ref{fig:notFinest}
        shows an example of a cograph and an equitable 2-homogeneous partition which is not the finest.
	In fact, one can verify that any refinement of this partition is
        also equitable.
        Moreover, Lemma~\ref{lem:automorphismIFFpartitionWeightRegular}
        cannot be generalized to $c$-homogeneous partitions with~$c > 2$ as
        the cograph in Figure~\ref{fig:kbalancedWRnotEquitable} admits a
        weight-equitable partition which is not equitable.
        \begin{figure}[t]
          \centering
          \begin{subfigure}[b]{0.45\linewidth}
            \centering
            \begin{tikzpicture}
              \node[shape=circle,fill=black,inner sep=2pt] (A) at (0,0) {};
              \node[shape=circle,fill=black,inner sep=2pt] (B) at (1,0) {};
              \node[shape=rectangle,fill=black,inner sep=2.4pt] (C) at (2,1) {};
              \node[shape=rectangle,fill=black,inner sep=2.4pt] (D) at (-1,1) {};
              \node[regular polygon,regular polygon sides=3,fill=black,inner sep=1.4pt] (E) at (-1,-1) {};
              \node[regular polygon,regular polygon sides=3,fill=black,inner sep=1.4pt] (F) at (2,-1) {};
              \draw[] (D)--(A)--(F)--(B)--(D);
              \draw[] (C)--(A)--(E)--(B)--(C);
            \end{tikzpicture}
            \caption{A cograph with an equitable partition of 2-cells. Any refinement of its partition is again equitable.}\label{fig:notFinest}
          \end{subfigure}
          \hfill
          \centering
          \begin{subfigure}[b]{0.45\linewidth}
            \centering
            \begin{tikzpicture}
              \node[shape=circle,fill=black,inner sep=2pt] (A) at (0,0) {};
              \node[shape=circle,fill=black,inner sep=2pt] (B) at (1,0) {};
              \node[shape=circle,fill=black,inner sep=2pt] (C) at (2,-1) {};
              \node[shape=rectangle,fill=black,inner sep=2.4pt] (D) at (0,-1) {};
              \node[shape=rectangle,fill=black,inner sep=2.4pt] (E) at (1,-1) {};
              \node[shape=rectangle,fill=black,inner sep=2.4pt] (F) at (2,0) {};
              \node (G) at (1,-1.5) {};
              \draw[] (A)--(B)--(F)--(C)--(E)--(D)--(A);
              \draw[] (A)--(E)--(B);
              \draw[] (D)--(B)--(C);
              \draw[] (E)--(F);
            \end{tikzpicture}
            \caption{A cograph with a $3$-balanced weight-equitable partition, which is not equitable.}\label{fig:kbalancedWRnotEquitable}
          \end{subfigure}
          \caption{Examples of cographs.}
          \label{fig:cographs}
        \end{figure}
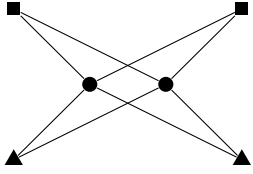
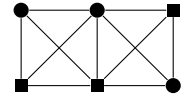
        We now turn the focus back to finding 2-homogeneous weight-equitable
        partitions of cographs.
	To devise an algorithm finding such partitions, we make use of an
        alternative characterization of cographs.
        \begin{proposition}[Corneil et al.~\cite{CLHB1981}]
          A cograph is defined recursively using the following three rules:
          \begin{enumerate}[(i)]
          \item A graph on a single vertex is a cograph.
          \item If $G_1,\dots,G_k$ are cographs, then so is $G_1\cup \dots \cup G_k$.
          \item If $G$ is a cograph, then so is its complement $\overline{G}$.
          \end{enumerate}
        \end{proposition}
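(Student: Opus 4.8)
The plan is to prove that the recursive description of cographs coincides with the ``no induced $P_4$'' definition used in the paper, by treating the two implications separately, each by induction. For the easy direction -- every graph built by rules~(i)--(iii) is $P_4$-free -- I would induct on the number of rule applications. A single vertex trivially has no induced $P_4$. If $H = G_1 \cup \dots \cup G_k$ with each $G_i$ $P_4$-free, any induced $P_4$ of $H$, being connected, would lie inside some $G_i$, which is impossible. If $H = \overline{G}$ with $G$ $P_4$-free, I would invoke the self-complementarity $\overline{P_4} \cong P_4$: an induced $P_4$ of $\overline{G}$ on a set $S$ would force $G[S] = \overline{\overline{G}[S]} \cong \overline{P_4} \cong P_4$ to be an induced $P_4$ of $G$, a contradiction.

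For the converse -- every $P_4$-free graph is generated by the rules -- I would induct on $n = \card{V(G)}$, with trivial base case $n = 1$. If $n \geq 2$ and $G$ is disconnected, each connected component is a proper induced subgraph, hence $P_4$-free, hence a cograph by induction, so $G$ is their disjoint union and rule~(ii) applies. If $n \geq 2$ and $G$ is connected, it suffices to show that $\overline{G}$ is disconnected: granting this, $\overline{G}$ is $P_4$-free (again using $\overline{P_4}\cong P_4$) and disconnected, hence a cograph by the disconnected case, so $G = \overline{\overline{G}}$ is a cograph by rule~(iii).

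Everything therefore reduces to the lemma that a connected $P_4$-free graph on at least two vertices has a disconnected complement -- equivalently, that if $G$ and $\overline{G}$ are both connected then $G$ contains an induced $P_4$ -- and proving this cleanly is the main obstacle. My approach would be: since $\overline{G}$ is connected with at least two vertices, $G$ is not complete, so it has a non-adjacent pair; if some non-adjacent pair has $G$-distance at least $3$, a shortest path between them is induced and already contains an induced $P_4$. Otherwise $G$ has diameter $2$; I would then fix a non-edge $\{a,b\}$, a common $G$-neighbour $c$ (which exists by the diameter bound), and -- using that connectedness of $\overline{G}$ prevents $c$ from being $G$-universal -- a vertex $d$ with $d \not\sim c$. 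A short case analysis on the adjacencies of $d$ to $a$ and to $b$ then either exhibits an induced $P_4$ directly (for example, the $4$-tuple $(d,a,c,b)$ induces a $P_4$ when $d \sim a$ and $d \not\sim b$, and symmetrically) or pins down enough further non-edges at $c$ and $d$ to iterate the argument and eventually produce an induced $P_4$, contradicting $P_4$-freeness. This is precisely the classical ``connected graph with connected complement is not $P_4$-free'' argument; once it is in place, the two inductions above are routine.
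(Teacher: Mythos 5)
The paper does not actually prove this proposition; it is imported verbatim from Corneil, Lerchs and Stewart Burlingham \cite{CLHB1981}, so you are supplying an argument the paper omits. Your overall architecture is the standard one and most of it is sound: the forward direction (graphs built by rules (i)--(iii) are $P_4$-free, via connectedness of $P_4$ and $\overline{P_4}\cong P_4$) is complete, and the converse correctly reduces, by strong induction on $|V(G)|$, to the single lemma that a connected $P_4$-free graph on at least two vertices has a disconnected complement.

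The gap is in your proof of that lemma. The cases $d\sim a$, $d\not\sim b$ (and its mirror) do give the induced path $d\,a\,c\,b$, but the two remaining cases are not resolved: if $d$ is adjacent to both $a$ and $b$ you get an induced $C_4$, and if $d$ is adjacent to neither you get an isolated vertex relative to $\{a,b,c\}$; in both situations one can check on small configurations (e.g.\ five vertices $a,b,c,d,v$ with $v$ adjacent to $a,b,c$ but not $d$, around the $C_4$ $a\,c\,b\,d$) that no induced $P_4$ yet exists among the vertices in hand, so the argument must recruit new vertices. Your phrase ``pins down enough further non-edges \dots to iterate the argument and eventually produce an induced $P_4$'' is where the entire difficulty lives: no progress measure is given, and it is not clear the iteration terminates rather than cycling through ever larger $P_4$-free configurations. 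The clean way to close this is Seinsche's induction on $n$: pick any vertex $v$; by induction (and self-complementarity of $P_4$) one of $G-v$, $\overline{G-v}$ is disconnected, say $G-v$ with components $C_1,\dots,C_k$; if $v$ misses some component then $G$ is disconnected, if $v$ is universal then $\overline{G}$ is disconnected, and otherwise one finds $u'\not\sim v$ and $w'\sim v$ adjacent within one component and a neighbour $x$ of $v$ in another, so that $u'\,w'\,v\,x$ is an induced $P_4$. Substituting that argument for your diameter-$2$ case analysis makes the proof complete.
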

	Note that we may equivalently replace (iii) by the condition that
        the join of two cographs is again a cograph.
        Using this characterization, the structure of a cograph can uniquely be represented by a rooted tree.
	
	Let $G$ be a cograph.
        A \emph{cotree} of $G$ is a rooted tree whose inner vertices each
        have a label 0 or 1.
        A leaf vertex corresponds to an induced subgraph on a single vertex
        and the subtree rooted at a vertex with label 0 or 1 corresponds to
        the union or join respectively of the subgraphs represented by its
        children.
        Note that two vertices form an edge in $G$ if and only if their
        least common ancestor in the cotree has label 1.
        If we require the labels on a root-leaf path to be alternating,
        this tree is unique, see Corneil et al.~\cite{CLHB1981}.
        The same authors observed that the graph isomorphism problem is therefore polynomial-time solvable for cographs.

	To show that finding 2-homogeneous partitions is easy for cographs,
        we bound the complexity of the problem of finding a
        fixed-point-free involutionary automorphism in a cograph.
        To do this, we need a preliminary lemma.
        Here, $\psi|_V$ denotes the restriction of a map $\psi$ to the set
        $V$ and $G[S]$ the subgraph of the graph $G$ induced by a subset
        $S$ of the vertices.
	\begin{lemma}
          \label{lem:automorphismCotree}
          Let~$G$ be a cograph with unique cotree $T$ with
          alternating 0/1-labels.
          Then, $\phi\colon V \rightarrow V$ is an automorphism of $G$ if
          and only if there exists an automorphism $\psi$ on $T$ such that
          $\psi|_V = \phi$ and which respects the 0/1-labeling.
	\end{lemma}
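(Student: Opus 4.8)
I would first dispatch the easy implication ($\Leftarrow$). If $\psi$ is a label-respecting automorphism of the rooted tree $T$, then $\psi$ fixes the root and preserves childlessness, so it permutes the leaves and $\phi\define\psi|_V$ is a well-defined permutation of $V$. For $u,v\in V$ a tree automorphism satisfies $\psi(\mathrm{lca}(u,v))=\mathrm{lca}(\psi(u),\psi(v))$, and since $\psi$ preserves the $0/1$-label of every inner node, $\mathrm{lca}(u,v)$ and $\mathrm{lca}(\phi(u),\phi(v))$ carry the same label. As $u\sim v$ in $G$ holds exactly when $\mathrm{lca}(u,v)$ has label $1$, this shows $\phi$ is an automorphism of $G$.

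For the harder implication ($\Rightarrow$), the idea is to argue by induction on $|V(G)|$, but to make the induction go through I would first strengthen the statement to isomorphisms: for any two cographs $G,H$ with unique alternating cotrees $T_G,T_H$, every graph isomorphism $\phi\colon G\to H$ extends to a label-respecting rooted-tree isomorphism $\psi\colon T_G\to T_H$. The base case is a single vertex. For the inductive step I would use the dictionary between the root label and connectivity: in the canonical alternating cotree every inner node has at least two children and labels alternate, so if the root of $T_G$ has label $0$ the vertex sets of its children-subtrees $T_{G_1},\dots,T_{G_k}$ are precisely the connected components of $G$, while if the root has label $1$ they are precisely the connected components of $\overline{G}$. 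The label-$1$ case reduces to the label-$0$ case by complementation: $\phi$ is simultaneously an isomorphism $\overline{G}\to\overline{H}$, complementing a cograph flips every label of its (still unique) cotree, so a label-respecting tree isomorphism for the complements flips back to one for $G,H$.

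It therefore suffices to handle a root of label $0$. Then $G$ is disconnected, hence so is $H$, so the root of $T_H$ has label $0$ as well; write its children-subtrees as $T_{H_1},\dots,T_{H_\ell}$. Since $\phi$ carries connected components of $G$ bijectively to connected components of $H$, we get $\ell=k$ and a permutation $\pi$ of $[k]$ with $\phi(V(G_i))=V(H_{\pi(i)})$, so that $\phi|_{V(G_i)}\colon G_i\to H_{\pi(i)}$ is an isomorphism for each $i$. By the induction hypothesis each restriction extends to a label-respecting tree isomorphism $\psi_i\colon T_{G_i}\to T_{H_{\pi(i)}}$; gluing the $\psi_i$ together with the root-to-root assignment produces the required $\psi$, which by construction restricts to $\phi$ on the leaves. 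Taking $H=G$ yields the lemma.

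The main difficulty is not any single computation but setting up the reduction cleanly: one must (i) invoke the structural fact (from Corneil et al.) that the canonical alternating cotree has all inner nodes of degree at least two, so that the correspondence ``children-subtrees of the root $\leftrightarrow$ (co-)components'' is exact; (ii) phrase the induction for isomorphisms rather than automorphisms, since an automorphism of $G$ need only map a component-subgraph \emph{isomorphically onto another} component-subgraph; and (iii) treat the label-$1$ root uniformly via complementation, using uniqueness of the alternating cotree. The remaining steps are routine bookkeeping on rooted trees.
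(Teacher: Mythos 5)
Your proof is correct, and for the hard direction it takes a genuinely different route from the paper's. The backward implication (lca preservation under a label-respecting tree automorphism) is the same in both. For the forward implication, the paper inducts on the \emph{depth} of the cotree and argues both root labels directly: for a label-$1$ root it runs a least-common-ancestor argument to show that the cograph induced by each child-subtree of the root must be mapped entirely into the cograph of another child-subtree, and only then permutes and recurses. You instead (a) strengthen the statement to isomorphisms $G\to H$ and induct on $|V(G)|$, and (b) dispose of the label-$1$ root by complementation, using that the cotree of $\overline{G}$ is the same tree with all inner labels flipped, so that only the label-$0$ (disconnected) case needs a direct argument. Your strengthening to isomorphisms is not merely cosmetic: the paper's own inductive step quietly asserts that isomorphic components ``have isomorphic cotrees, which means that there exists such a mapping which preserves the $0/1$-labels and has the property $\psi|_C=\phi|_C$'' --- which is exactly the isomorphism form of the lemma, so your formulation makes that appeal legitimate rather than circular. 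What the paper's approach buys in exchange is that it never needs the complement-flips-labels fact and stays entirely inside the automorphism setting. One point worth making explicit in your write-up: the complementation step applies the label-$0$ argument to a graph of the \emph{same} size, so the induction is well-founded only because the label-$0$ case invokes the induction hypothesis solely on the strictly smaller components; stating this removes any appearance of circularity. You should also justify, as you note in item (i), that every inner node of the canonical alternating cotree has at least two children, so that the children-subtrees of a label-$0$ root are exactly the connected components (each child is a leaf or carries label $1$, hence induces a connected subgraph).
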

	\begin{proof}
          We prove the first implication that relates automorphisms~$\phi$
          of~$G$ with automorphisms~$\psi$ of~$T$ by induction.
          If $T$ has depth 0 or 1, the statement is trivial.
          Suppose that the claim holds for cographs with cotrees of depth
          at most $d$. Let $G$ be a cograph with a cotree $T$ of depth
          $d+1$ with root $r$ and let $\phi$ be an automorphism of $G$.
          Note that the cotree $T'$ of $\phi(G)$ is the same as $T$, up to
          a renaming of the leaves.
          We will show that there exists a label-preserving automorphism
          $\psi$ such that $\psi(T) = T'$ by considering a case distinction
          on the root label.

          If $r$ has label 0, the cograph is disconnected, but each subtree
          rooted at a child of $r$ induces a connected cograph.
          Since $\phi$ is an automorphism, it maps isomorphic connected
          components of $G$ onto each other.
          If a connected component $C$ is mapped onto a different component
          $C'$, let $\psi$ map the subtree of $C$ to the subtree of $C'$.
          Note that, since the components are isomorphic, their cotrees are
          isomorphic, which means that there exists such a mapping which
          preserves the 0/1-labels and has the property~$\psi|_C =
          \phi|_C$.
          If $\phi$ maps a component $C$ onto itself, then by the induction
          hypothesis, there exists an automorphism $\psi'$ of $T[C]$ which
          respects the 0/1-labels such that $\psi'|_C = \phi|_C$.
          Let~$\psi$ take the values of $\psi'$ when restricted to the
          cotree of $C$.
          Finally set $\psi(r) = r$.
          Then $\psi$ is an automorphism of $T$ which satisfies the statement.

          Assume that $r$ has label 1.
          In this case, the cograph is connected, but each subtree rooted at a child of $r$ corresponds to a disconnected cograph.
          Let $u$ and $v$ be vertices such that $\phi(u) = v$ and let~$T^u$ and $T^v$ be the subtrees of $r$ containing them.
          Denote the cograph induced by the tree $T^u$ by~$G[T^u]$.
          Each vertex in $G[T^u]$ which is not a neighbor of $u$ must also be mapped to a leaf of $T^v$.
          If not, its least common ancestor with $v$ is the root, which has label 1.
          This means that a nonneighbor of $u$ is mapped to a neighbor of $v$, contradicting the fact that $\phi$ is an automorphism.
          Therefore, all connected components of $G[T^u]$, apart from possibly the one containing $u$, must be mapped into $G[T^v]$.
          Since $G[T^u]$ is disconnected, there exists a leaf $w\in T^u$ which, as a vertex of $G[T^u]$, is not in the same connected component as $u$.
          By the same argument, each vertex in the connected component of $u$ must also be mapped into $G[T^v]$, else its least common ancestor with $w$ will be the root with label 1.
          Hence each cograph given by a subtree of $r$ is mapped entirely into a cograph induced by another subtree of $r$.
          As a result, it is also not possible that $\phi$ maps the cographs of two distinct subtrees into the cograph of the same larger subtree, as this would force a larger cograph to be split among smaller ones.
          This means that we can see $\phi$ as a permutation of the cographs induced by each subtree rooted at a child of $r$.
          Subgraphs that are mapped onto each other must be isomorphic, hence there exists an isomorphism between their cotrees which coincides with $\phi$ when restricted to the leaves.
          Let $\psi$ take the values of these isomorphisms.
          If a subgraph $G'$ is mapped onto itself by $\phi$, the induction hypothesis gives us a label-preserving automorphism $\psi_{G'}$ of its cotree and we can set $\psi|_{G'} = \psi_{G'}$.
          Setting $\psi(r) = r$ completes the required automorphism of $T$.

          Conversely, suppose that $\psi$ is an automorphism of $T$ which respects the 0/1-labeling and let $\phi \define \psi|_V$ be the restriction of $\psi$ to $V$.
          By the definition of an automorphism, $(\psi(u),\psi(v))\in E(T)$ if and only if $\{u,v\}\in E(T)$.
          In particular, this means that the least common ancestor  of every pair of leaf vertices is preserved under $\psi$.
          Then $\phi$ maps edges of $G$ to edges and nonedges to nonedges, so it is an automorphism of $G$.
	\end{proof}
	
	Lemma \ref{lem:automorphismCotree} allows us to test automorphism on cographs by considering the cotree.
        In general, the automorphism problem on rooted labeled trees is
        known to be polynomial-time solvable, see Colbourn and Booth~\cite{CK1981}, hence it is polynomial for cographs.
        However, in the context of Corollary \ref{cor:wrCographs} we need to determine the existence of a particular type of automorphism which is also an involution without fixed points.
        In terms of the cotree, this means that there should be an automorphism which swaps the leaves pairwisely without leaving any in place.
        Algorithm \ref{alg:treeAuto} proposes a recursive procedure which determines the existence of such a mapping for general rooted trees.
        Here $T_v$ denotes the subtree of a tree $T$ rooted at vertex $v$.
        The algorithm assumes that the input tree has been labeled using
        the $j$-numbering procedure by Colbourn and Booth~\cite{CK1981}.
        These numbers are assigned in a top-down fashion to each vertex of the tree and, together with the depth of a vertex, partition the tree into its orbits under the automorphism group.
        A $j$-numbering can be computed in linear time, hence the running time of Algorithm \ref{alg:treeAuto} is polynomial.

        Before we show correctness of Algorithm~\ref{alg:treeAuto}, we provide some intuition.
        Consider the rooted tree $T$ given in Figure \ref{fig:algorithmexample}.
        It is clear that an automorphism of $T$ could swap the subtrees rooted at $u$ and~$v$, as they are isomorphic and have the same parent.
        This is a fixed-point-free involution on the leaves that descend from $u$ and $v$.
        The subtree of $w$ cannot be mapped to another part of the graph in its entirety, but if we go one level down, we see that interchanging the children of $w$ also results in a fixed-point-free involution on the remaining leaves.

        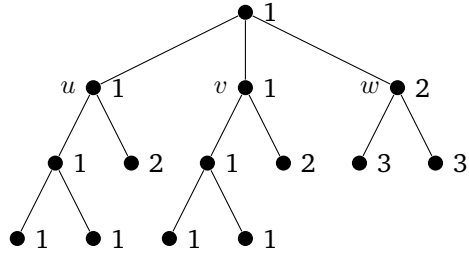
\begin{figure}[t]
            \centering
			\begin{tikzpicture}
				\node[shape=circle,fill=black,inner sep=2pt,label=right:1] (a) at (0,0) {};
				\node[shape=circle,fill=black,inner sep=2pt,label=right:1] (b) at (1,0) {};
				\node[shape=circle,fill=black,inner sep=2pt,label=right:1] (c) at (0.5,1) {};
				\node[shape=circle,fill=black,inner sep=2pt,label=right:2] (d) at (1.5,1) {};
				\node[shape=circle,fill=black,inner sep=2pt,label=left:$u$,label=right:1] (e) at (1,2) {};
				\node[shape=circle,fill=black,inner sep=2pt,label=right:1] (f) at (2.5,1) {};
				\node[shape=circle,fill=black,inner sep=2pt,label=right:2] (g) at (3.5,1) {};
				\node[shape=circle,fill=black,inner sep=2pt,label=left:$v$,label=right:1] (h) at (3,2) {};
                \node[shape=circle,fill=black,inner sep=2pt,label=right:1] (i) at (2,0) {};
				\node[shape=circle,fill=black,inner sep=2pt,label=right:1] (j) at (3,0) {};
                \node[shape=circle,fill=black,inner sep=2pt,label=right:1] (k) at (3,3) {};
                \node[shape=circle,fill=black,inner sep=2pt,label=left:$w$,label=right:2] (l) at (5,2) {};
                \node[shape=circle,fill=black,inner sep=2pt,label=right:3] (m) at (4.5,1) {};
                \node[shape=circle,fill=black,inner sep=2pt,label=right:3] (n) at (5.5,1) {};
                \draw[] (a) -- (c) -- (b);
                \draw[] (i) -- (f) -- (j);
                \draw[] (c) -- (e) -- (d);
                \draw[] (f) -- (h) -- (g);
                \draw[] (e) -- (k) -- (h);
                \draw[] (m) -- (l) -- (n);
                \draw[] (k) -- (l);
			\end{tikzpicture}
			\caption{A $j$-numbered tree which admits a fixed-point-free involution on the leaves.}\label{fig:algorithmexample}
        \end{figure}

        This idea can be formalized as follows.
        Start at the root of the tree and consider all of its children with a particular $j$-number.
        Since they are at the same depth, these vertices share an orbit under $\text{Aut}(T)$.
        If there is an even number of them, their subtrees can be interchanged pairwisely, as illustrated in the example above.
        Otherwise, one subtree cannot be paired and the procedure is repeated on the root of this subtree.
        The algorithm ends when either all leaves have been interchanged or when a vertex is encountered which has an odd number of children that are leaves.
        In the latter case, we can neither recurse nor interchange in pairs, hence $T$ does not have the required automorphism.

	\begin{algorithm2e}[t]
          \SetAlgoLined
          \SetKwFunction{FhasNiceAutomorphism}{hasNiceAutomorphism}
          \SetKwProg{Fn}{}{}{}
          \Input{A (labeled) rooted tree $T$ with root $r$ and $j$-numbers
            assigned to each vertex, following the procedure from \cite[Lemma 2.1]{CK1981}}
          \Output{Does $T$ admit an automorphism which is a fixed-point-free involution on the leaves?}

          \Fn{\FhasNiceAutomorphism{$T$, $r$}}{
            \For{each child $v$ of $r$ with distinct $j$-number}{
              let $k_v$ be the number of children with the same $j$-number as $v$\\
              \uIf {$k_v$ is odd} {
                \uIf {$v$ is a leaf} {
                  \Return false\\
                }
                \uElse {
                  \uIf {\FhasNiceAutomorphism{$T_v$, $v$} = false}{
                    \Return false\\
                  }
                }
              }
            }
            \Return true\\
          }
          \caption{\FuncSty{hasNiceAutomorphism}}
          \label{alg:treeAuto}
	\end{algorithm2e}
	
	\begin{lemma}
          \label{lem:fixedPointFreeAutomosphismCotree}
          Let~$T$ be a (labeled) rooted tree with at least two vertices.
          Algorithm \ref{alg:treeAuto} returns ``true'' if and only if $T$ admits an automorphism which is a fixed-point-free involution on the leaves.
          The running time of the algorithm is $O(n^2)$.
	\end{lemma}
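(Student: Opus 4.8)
The plan is to establish the equivalence by induction on the depth of $T$, with the recursion of Algorithm~\ref{alg:treeAuto} mirroring the induction, and then derive the running time by a direct accounting argument. The only fact about $j$-numberings I would invoke is the one recalled above: among the children of a fixed vertex, two of them carry the same $j$-number if and only if they lie in a common orbit of $\text{Aut}(T)$, and in that case the rooted subtrees hanging from them are pairwise isomorphic and $\text{Aut}(T)$ realizes every permutation of these subtrees while fixing everything outside them.

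First I would prove the following structural characterization: $T$ admits an automorphism whose restriction to the leaves is a fixed-point-free involution if and only if, for the root $r$, every $j$-class $C$ of children of $r$ of \emph{odd} cardinality has the property that the common rooted subtree $T_v$, $v\in C$, itself admits an automorphism whose restriction to its own leaves is a fixed-point-free involution (in particular this fails whenever the members of $C$ are leaves, since a single leaf cannot be moved). For the forward direction, any $\psi\in\text{Aut}(T)$ fixes $r$ and hence permutes each $j$-class $C=\{c_1,\dots,c_k\}$ within itself; writing $S_j\define T_{c_j}$ and describing $\psi$ on $S_1\cup\dots\cup S_k$ by a permutation $\pi\in\mathrm{Sym}(k)$ together with isomorphisms $g_j\colon S_j\to S_{\pi(j)}$, the requirement that $\psi$ restrict to a fixed-point-free involution on the leaves forces $\pi$ to be an involution and, for every fixed point $j$ of $\pi$, forces $g_j$ to be an automorphism of $S_j$ restricting to a fixed-point-free involution on the leaves of $S_j$; since all $S_j$ are isomorphic, if the latter property is unavailable then $\pi$ must be fixed-point-free, which needs $k$ even. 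Conversely, assuming the condition, I would build $\psi$ one $j$-class at a time: on an even class pair up the subtrees and swap them in pairs, and on an odd class pair up all but one subtree, swap those, and act on the leftover subtree by the automorphism guaranteed by the hypothesis; the resulting $\psi$ restricts to a fixed-point-free involution on the leaves.

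Next I would observe that Algorithm~\ref{alg:treeAuto} checks exactly this condition at the root $r$: it enumerates the distinct $j$-numbers among the children of $r$, computes the class size $k_v$ for each, returns \textbf{false} for an odd class whose representative is a leaf, and otherwise recurses into $T_v$ to decide whether that subtree admits the required automorphism. Because distinct $j$-numbers yield disjoint subtrees, the recursion descends strictly and no vertex is processed more than once, so by the inductive hypothesis the recursive calls correctly decide the property for each $T_v$ and the structural characterization gives correctness of the top-level call. The base case is depth one---a root all of whose children are leaves---where there is a single $j$-class of size $n-1$ and a fixed-point-free involution on $n-1$ isolated leaves exists precisely when $n-1$ is even, which is exactly what the algorithm reports.

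For the complexity, I would note that each vertex $v$ is processed at most once and that processing it amounts to grouping its children by $j$-number and counting class sizes, costing $O(\deg(v)^2)$ (or $O(\deg(v)\log\deg(v))$ with sorting); since $\sum_v \deg(v) = O(n)$ this yields $\sum_v \deg(v)^2 \le (\max_v \deg(v))\sum_v \deg(v) = O(n^2)$. The main obstacle is the structural characterization, and within it the parity argument together with the ``all-or-nothing'' observation that a family of pairwise isomorphic sibling subtrees either all admit, or all fail to admit, an automorphism restricting to a fixed-point-free involution on their leaves; the induction, the base case, and the running-time bound are then routine.
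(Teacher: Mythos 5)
Your proof is correct and follows essentially the same route as the paper's: induction on the depth of the tree, using the fact that $j$-classes among the children of a vertex consist of pairwise isomorphic, freely interchangeable subtrees, pairing up even classes, and recursing into one representative of each odd class. Your forward direction (decomposing the automorphism into a permutation $\pi$ of the class plus isomorphisms, arguing $\pi$ must be an involution and hence has a fixed point on an odd class) and your degree-sum complexity bound are somewhat more explicit than the paper's, but the underlying argument is the same.
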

	\begin{proof}
          We will prove the claim by induction.
          As a base case, suppose that $T$ has depth one, i.e., the
          corresponding cograph is either a clique or does not contain any edge.
          Then every child of the root has the same $j$-number, and all
          nodes can be exchanged arbitrarily by an automorphism.
          If there is an even number of them, the algorithm will return ``true'', as the first if-condition is never fulfilled.
          We can pair up these vertices and exchange them to obtain an involutionary automorphism without fixed leaves.
          If the root has an odd number of children, they cannot be partitioned into pairs and such an automorphism does not exist.
          Algorithm \ref{alg:treeAuto} will correctly return ``false'', as the first if-statement is triggered.

          Assume that the statement holds for rooted trees of depth $d$ and let $T$ be a tree of depth $d+1$ with root $r$.
          Consider a $j$-number among the children of $r$ and let $v_1,\dots,v_k$ be the children which share this number.
          The subtrees rooted at $v_1,\dots,v_k$ are isomorphic, hence if $k$ is even, we can pair them up arbitrarily and map the pairs onto each other.

          If all $j$-numbers appear an even number of times, the union of these partial mappings constitutes an automorphism of $T$ which is a fixed-point-free involution on the leaves.
          Algorithm \ref{alg:treeAuto} will return ``true'', as no $j$-number is shared by an odd number of children.

          Suppose instead that $k$ is odd.
          In this case, it is not possible to pair $v_1,\dots,v_k$ and permute their subtrees pairwisely.
          Note that it is also not allowed to permute subtrees in larger cycles, as this no longer creates an involution on the leaves.
          Without loss of generality, we may pair up and exchange the subtrees of all but $v_k$.
          If $T$ has an automorphism as required, then $T_{v_k}$ must have an automorphism which is a fixed-point-free involution on its leaves and vice versa.
          By the induction hypothesis, this is the case if and only if the algorithm returns ``true'' for this subtree.
          If ``false'' is returned, we know that such an automorphism does not exist, hence we also cannot find one for $T$.
          The algorithm halts and returns ``false'' for this instance.
          If for each odd $j$-class the answer is ``true'', then the partial mappings of the classes can be merged into a valid automorphism as in the even case, and algorithm correctly returns ``true''.

          Finally, we bound the running time of Algorithm \ref{alg:treeAuto}.
          In the worst case, all children of each vertex have distinct $j$-numbers, except for the leaves, whose $j$-numbers all come in even numbers.
          Then, the algorithm recurses on all vertices of $T$ which are not a leaf.
          For each recursion step, it partitions the children of a given vertex by their $j$-numbers.
          This can be done in linear time and a vertex certainly has no more than $n$ children.
          Hence Algorithm \ref{alg:treeAuto} has a worst case running time of~$O(n^2)$.
	\end{proof}
	
	Since the number of vertices of a tree equals at most twice the
        number of leaves, the running time of Algorithm \ref{alg:treeAuto}
        is also quadratic in the number of vertices of the cograph itself.
        Combining the previous results from this section, we obtain our last main result.
	
	\begin{theorem}
          \label{th:mainresultcographs}
          Let $G$ be a cograph. The problem of deciding whether $G$
          admits a (weight-) equitable partition with $\frac{n}{2}$ cells of size 2 can be solved in $O(n^2)$ time.
	\end{theorem}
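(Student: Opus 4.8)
The plan is to assemble the results of this section into a single decision procedure. We may assume that $G$ is connected, since weight-equitability is only defined for connected graphs; if one is only interested in ordinary equitable partitions, the argument below works verbatim for disconnected cographs as well. By Corollary~\ref{cor:wrCographs}, such a $G$ admits a (weight-)equitable partition with $\frac{n}{2}$ cells each of size $2$ if and only if $G$ has an automorphism that is an involution without fixed points (for the equitable case of a disconnected cograph one uses Lemma~\ref{lem:automorphismIFFpartitionRegular} instead). In particular, if $n$ is odd no such automorphism exists. Hence it suffices to decide in $O(n^2)$ time whether $G$ has a fixed-point-free involutionary automorphism.

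First I would compute the unique cotree $T$ of $G$ with alternating $0/1$-labels; by Corneil et al.~\cite{CLHB1981} this takes linear time, and $T$ has at most $2n$ vertices. Because the labels of the alternating cotree are a function of the depth of a vertex, every automorphism of $T$ as a rooted tree automatically preserves depth, hence respects the $0/1$-labeling; so the label-preservation hypothesis in Lemma~\ref{lem:automorphismCotree} is free here. Lemma~\ref{lem:automorphismCotree} then shows that $\phi\colon V\to V$ is an automorphism of $G$ precisely when $\phi=\psi|_V$ for some automorphism $\psi$ of $T$, and since $V$ is exactly the leaf set of $T$, the map $\phi$ is a fixed-point-free involution if and only if $\psi$ is a fixed-point-free involution on the leaves of $T$. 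Consequently, $G$ has a fixed-point-free involutionary automorphism if and only if $T$ admits an automorphism that is a fixed-point-free involution on its leaves.

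Next I would feed $T$ to Algorithm~\ref{alg:treeAuto}. After computing a $j$-numbering of $T$ in linear time following Colbourn and Booth~\cite{CK1981}, Lemma~\ref{lem:fixedPointFreeAutomosphismCotree} guarantees that the algorithm correctly answers, in time $O(|V(T)|^2)$, whether $T$ has an automorphism that is a fixed-point-free involution on its leaves. Since $|V(T)|\le 2n$, the combined cost of building the cotree, computing the $j$-numbering, and running Algorithm~\ref{alg:treeAuto} is $O(n^2)$, which establishes the claim. (If one also wants to output a partition, the same recursion can be made to record the pairing of subtrees that realizes the involution.)

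The routine work is already encapsulated in the preceding lemmas, so the only point that needs care is the interface between them: one must check that a fixed-point-free involution of $G$ corresponds under Lemma~\ref{lem:automorphismCotree} to a tree automorphism that is itself a fixed-point-free involution \emph{on the leaves}, rather than to an arbitrary tree automorphism whose leaf-restriction happens to have this property, and that the alternating cotree makes the label-preservation requirement automatic. Handling the parity of $n$ and disconnected cographs is the only other subtlety, and both are dealt with by the observations above; no further ideas are required.
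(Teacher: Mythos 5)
Your argument is correct and is exactly the assembly the paper intends: it states the theorem with only the remark ``combining the previous results from this section,'' and your chain Corollary~\ref{cor:wrCographs} $\to$ Lemma~\ref{lem:automorphismCotree} $\to$ Algorithm~\ref{alg:treeAuto} with Lemma~\ref{lem:fixedPointFreeAutomosphismCotree} is precisely that combination. Your extra care at the interfaces (label preservation being automatic for the alternating cotree, the bound $|V(T)|\le 2n$, parity of $n$, and connectivity) is sound and in fact spells out details the paper leaves implicit.
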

	
	Note that Algorithm \ref{alg:treeAuto} can easily be modified to keep track of the partial mappings and return an automorphism $\psi$ of the cotree which is a fixed-point-free involution on the leaves.
        Restricting $\psi$ to the leaves gives a fixed-point-free
        involutionary automorphism of $G$, whose orbits form a (weight-) equitable partition of $G$.
        Hence computing (weight-) equitable partitions with $\frac{n}{2}$
        cells of size 2 can also be done in quadratic time.
        We conclude with some remarks for finding general~$c$-homogeneous
        partitions of cographs.
	
	Algorithm \ref{alg:treeAuto} can be adapted to a more general setting.
        In its current form, it determines the existence of a fixed-point-free involution~$\gamma$ on a given cograph.
        Phrased differently, if we consider the group generated
        by~$\gamma$, then it acts as $\text{Sym}_2$, the symmetric group of order~2, on each cell of the
        (weight-) equitable partition.
        To find~$c$-homogeneous partitions, we can generalize the idea of
        Algorithm~\ref{alg:treeAuto} by partitioning the subtrees according
        to their~$j$-number into groups of size~$c$.
        By recursing as usual when this is not the case, we obtain an
        algorithm which determines whether $G$ admits a $c$-homogeneous
        partition such that there exists a subgroup of~$G$'s automorphism
        group~$\text{Aut}(G)$ that acts as the symmetric group $\text{Sym}_c$ on each cell.

	To exploit the generality of the algorithm, a link between automorphisms and $c$-homogeneous equitable partitions is required, similar to Lemma \ref{lem:automorphismIFFpartitionRegular}.
        Let $G$ be a graph with a $c$-homogeneous partition.
        It is easy to see that if a subgroup of $\text{Aut}(G)$ acts as $\text{Sym}_c$ on its cells, then the partition must be equitable.
        However, the converse need not hold, as illustrated by Example \ref{ex:WRnotSym}.
        This means that when our algorithm is successful, an equitable partition is guaranteed to exist, but it may produce false negatives.
	
	\begin{example}\label{ex:WRnotSym}
          Consider a 4-cycle.
          The trivial partition of this graph is equitable, because the graph is regular.
          However, its automorphism group is the dihedral group, which is a
          proper subgroup of~$\text{Sym}_4$.
	\end{example}
	
\subsection{Joins of weight-equitable partitions}\label{sec:experiments}

In Section \ref{sec:characterizations}, it was shown that the join of two weight-equitable partitions is again weight-equitable.
If we have a number of weight-equitable partitions at hand, this gives us a method to construct coarser ones.
An interesting question is, on the one hand, how close one can get to the coarsest nontrivial partition with a small number of join operations.
On the other hand, one may wonder how many joins can be done before obtaining the trivial partition.
In this section, we study these questions empirically for cographs, using Algorithm \ref{alg:treeAuto} to construct the initial weight-equitable partitions.
Recall that for cographs, weight-equitable partitions and equitable partitions coincide, hence Bastert's algorithm~\cite{B1999} finds the coarsest (weight-) equitable partition in polynomial time.
However, for general graphs no algorithm is known and the join operation may provide a useful approximation method.

The setup is the following.
For a small even integer $n$, generate all connected cographs on $n$ vertices.
For each cograph $G$, use Algorithm \ref{alg:treeAuto} to determine whether it admits a 2-homogeneous partition.
If not, it is discarded, and if it does, a 2-homogeneous partition is found.
As $n$ is small, the generators $g_1\dots, g_k$ of $\text{Aut}(G)$ can be computed quickly.
Nine additional 2-homogeneous partitions are sampled by generating $m_i\in [\text{ord}(g_i)]$ and applying the automorphism $\prod_{i=1}^k g_i^{m_i}$ to the given partition.
We take the join of every possible subset of the ten 2-homogeneous partitions and count the number of cells of the resulting weight-equitable partitions.

Figure \ref{fig:joinsmallgraphs} summarizes the results for $n=4,6,\dots,14$.
The size of the circle at position $(x,y)$ represents how often joining $x$ partitions results in a partition with $y$ cells.
Every column is normalized by the number of ways to choose $x$ partitions.
Note that for $n=4$ and $n=6$ the last columns are relatively sparse, because many cographs of this size do not admit ten distinct 2-homogeneous partitions.
For the larger graphs, it is most likely to obtain three to five cells by merging ten partitions.
In very few cases it results in the trivial partition.

\begin{figure}[t]
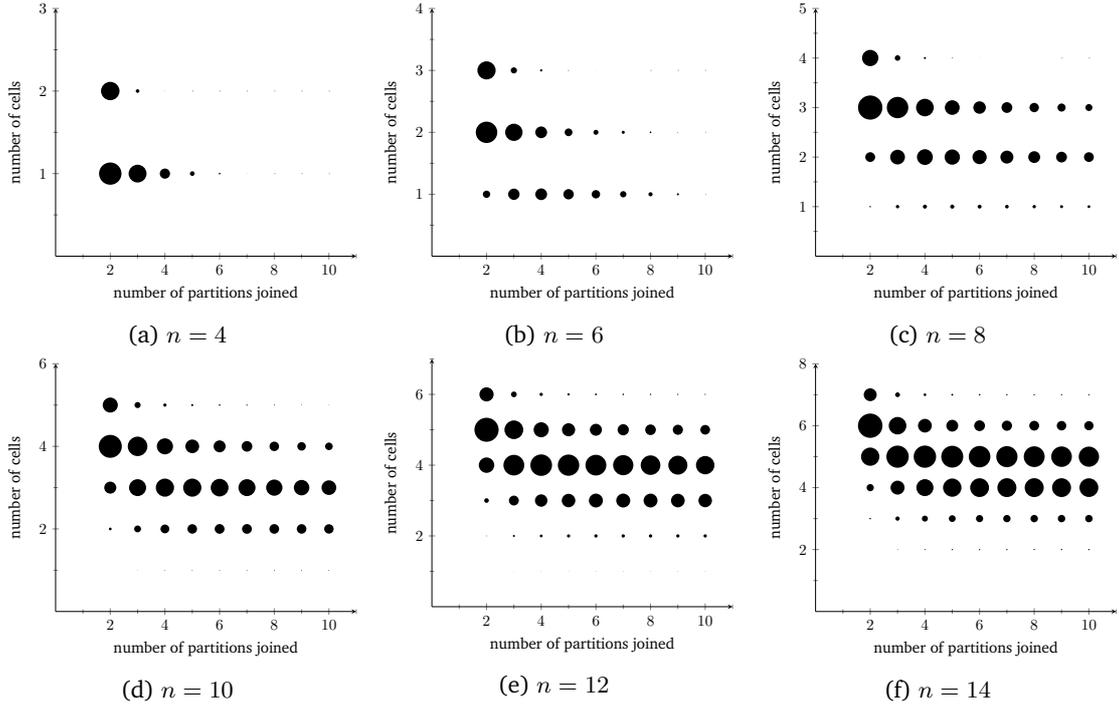

    \centering
    \begin{minipage}{.33\textwidth}
        \centering
        \subcaptionbox{$n=4$}{
        \begin{adjustbox}{width=0.95\textwidth}
        \makeplot{\figa}{11}{3}{10}
        \end{adjustbox}
        }
    \end{minipage}%
    \begin{minipage}{0.33\textwidth}
        \centering
        \subcaptionbox{$n=6$}{
        \begin{adjustbox}{width=0.95\textwidth}
        \makeplot{\figb}{11}{4}{1.6}
        \end{adjustbox}
        }
    \end{minipage}
    \begin{minipage}{0.33\textwidth}
        \centering
        \subcaptionbox{$n=8$}{
        \begin{adjustbox}{width=0.95\textwidth}
        \makeplot{\figc}{11}{5}{0.32}
        \end{adjustbox}
        }
    \end{minipage}
    \begin{minipage}{.33\textwidth}
        \centering
        \subcaptionbox{$n=10$}{
        \begin{adjustbox}{width=0.95\textwidth}
        \makeplot{\figd}{11}{6}{0.06}
        \end{adjustbox}
        }
    \end{minipage}%
    \begin{minipage}{0.33\textwidth}
        \centering
        \subcaptionbox{$n=12$}{
        \begin{adjustbox}{width=0.95\textwidth}
        \makeplot{\fige}{11}{7}{0.013}
        \end{adjustbox}
        }
    \end{minipage}
    \begin{minipage}{0.33\textwidth}
        \centering
        \subcaptionbox{$n=14$}{
        \begin{adjustbox}{width=0.95\textwidth}
        \makeplot{\figf}{11}{8}{0.0027}
        \end{adjustbox}
        }
    \end{minipage}
    \caption{Distribution of coarseness with respect to the number of join operations for small graphs.}
    \label{fig:joinsmallgraphs}
\end{figure}

To see whether this patterns continues for larger graphs, we repeat the procedure for 200 cographs on $20,\ 30,\ 40$ and $50$ vertices which admit a 2-homogeneous weight-equitable partition.
In this case, merging ten partitions most likely gives a partition with four cells.
However, four partitions is already enough to obtain a similar
distribution.

\begin{figure}[t]
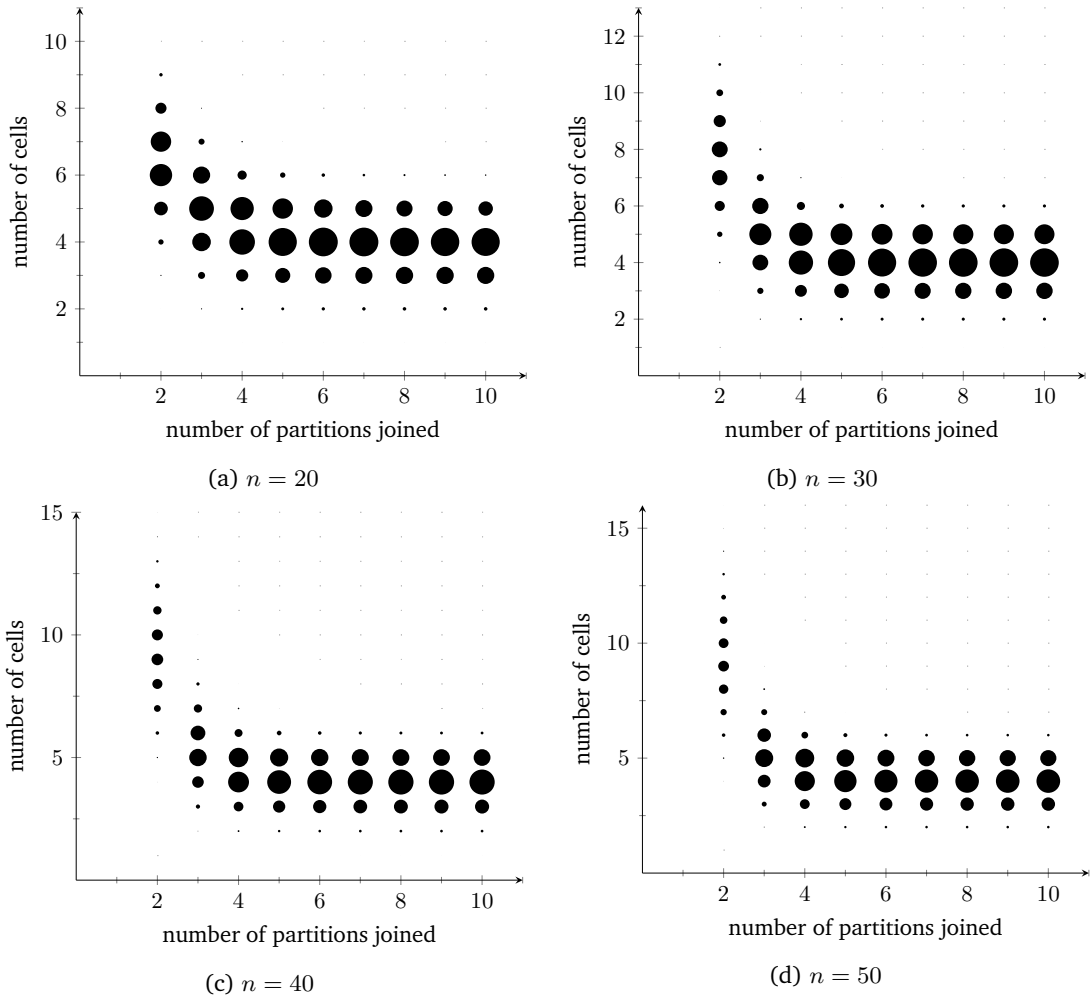

    \centering
    \begin{minipage}{.49\textwidth}
        \centering
        \subcaptionbox{$n=20$}{
        \begin{adjustbox}{width=0.95\textwidth}
        \makeplot{\largefiga}{11}{11}{0.025}
        \end{adjustbox}
        }
    \end{minipage}%
    \begin{minipage}{.49\textwidth}
        \centering
        \subcaptionbox{$n=30$}{
        \begin{adjustbox}{width=0.95\textwidth}
        \makeplot{\largefigb}{11}{13}{0.022}
        \end{adjustbox}
        }
    \end{minipage}
    \begin{minipage}{0.49\textwidth}
        \centering
        \subcaptionbox{$n=40$}{
        \begin{adjustbox}{width=0.95\textwidth}
        \makeplot{\largefigc}{11}{15}{0.019}
        \end{adjustbox}
        }
    \end{minipage}
    \begin{minipage}{0.49\textwidth}
        \centering
        \subcaptionbox{$n=50$}{
        \begin{adjustbox}{width=0.95\textwidth}
        \makeplot{\largefigd}{11}{16}{0.018}
        \end{adjustbox}
        }
    \end{minipage}

    \caption{Distribution of coarseness with respect to the number of join operations.}
    \label{fig:joinlargegraphs}
\end{figure}

We conclude that using the join operation discussed in
Corollary~\ref{lemma:changodsil2} for just a few times is able to generate very
coarse partitions even if we start with (weight-) equitable partitions that
are very fine.
Thus, although we do not know how to compute a coarsest non-trivial
weight-equitable partition, the join operator allows us to get a reasonably
good approximation.

	\section*{Acknowledgments}
	The research of A. Abiad is partially supported by the FWO grant 1285921N.
    We are also grateful to Frits Spieksma for his comments on the article.

	\bibliographystyle{abbrv}
    \bibliography{bibliography}
	
\end{document}